\newcommand{\Wig}{\mathsf{Wig}}
\newcommand{\Mix}{\mathsf{Mix}}
\newcommand{\heavy}{\mathsf{heavy}}
\newcommand{\ppmod}[1]{\,\, (\mathsf{mod}\ #1)}
\title{Hypothesis testing with low-degree polynomials in the Morris class of exponential families}
\author{Dmitriy Kunisky\thanks{Email: \textit{kunisky@cims.nyu.edu}. Partially supported by NSF grants DMS-1712730 and DMS-1719545.}}
\affil{Department of Mathematics, Courant Institute of Mathematical Sciences, New York University}
\date{November 6, 2020}
\begin{document}

\pagenumbering{gobble}

\maketitle

\begin{abstract}
    Analysis of low-degree polynomial algorithms is a powerful, newly-popular method for predicting computational thresholds in hypothesis testing problems.
    One limitation of current techniques for this analysis is their restriction to Bernoulli and Gaussian distributions.
    We expand this range of possibilities by performing the low-degree analysis of hypothesis testing for the Morris class of natural exponential families with quadratic variance function, giving a unified treatment of Gaussian, Poisson, gamma (including exponential and chi-squared), binomial (including Bernoulli), negative binomial (including geometric), and generalized hyperbolic secant distributions.
    We then give several algorithmic applications.
    \begin{enumerate}
    \item In models where a random signal is observed through coordinatewise-independent noise applied in an exponential family, the success or failure of low-degree polynomials is governed by the \emph{$z$-score overlap}, the inner product of $z$-score vectors with respect to the null distribution of two independent copies of the signal.
    \item In the same models, testing with low-degree polynomials exhibits \emph{channel monotonicity}: the above distributions admit a total ordering by computational cost of hypothesis testing, according to a scalar parameter describing how the variance depends on the mean in an exponential family.
    \item In a spiked matrix model with a particular non-Gaussian noise distribution, the low-degree prediction is incorrect unless polynomials with arbitrarily large degree in individual matrix entries are permitted.
        This shows that polynomials summing over self-avoiding walks and variants thereof, as proposed recently by Ding, Hopkins, and Steurer~(2020) for spiked matrix models with heavy-tailed noise, are strictly suboptimal for this model.
        Thus low-degree polynomials appear to offer a tradeoff between robustness and strong performance fine-tuned to specific models.
        Inspired by this, we suggest that a class of problems requiring \emph{exploration before inference}, where an algorithm must first examine the input and then use some intermediate computation to choose a suitable inference subroutine, appears especially difficult for low-degree polynomials.
    \end{enumerate}
\end{abstract}


\newpage

\tableofcontents

\newpage

\pagenumbering{arabic}

\section{Introduction}


A powerful framework has emerged recently for making predictions of and producing evidence for computational thresholds in high-dimensional average-case algorithmic problems, which analyzes the performance of algorithms that compute low-degree polynomials.
Originating in results on sum-of-squares optimization \cite{BHKKMP-2019-PlantedClique, HKPRSS-2017-SOSSpectral,HS-2017-BayesianEstimation,Hopkins-2018-Thesis}, this idea has since been fruitfully applied to a wide range of problems \cite{BKW-2019-ConstrainedPCA, DKWB-2019-SubexponentialTimeSparsePCA,BB-2020-ReducibilityStatCompGaps,GJW-2020-LowDegreeOptimization,SW-2020-LowDegreeEstimation,BBKMW-2020-SpectralPlantingColoring,Wein-2020-LowDegreeIndependentSet}.
Perhaps the main attraction of this so-called \emph{low-degree method} is the simplicity of low-degree polynomial algorithms, which are both quite natural and often easier to study than other putatively-optimal families of algorithms like convex relaxations and message-passing techniques.

Before applying the low-degree method in a given situation, we must decide how exactly to make sense of ``a low-degree polynomial succeeding'' at a given computational task.
Besides just producing a sensible definition for a given task---different definitions have been used in the literature for hypothesis testing \cite{Hopkins-2018-Thesis,KWB-2019-NotesLowDegree}, statistical estimation \cite{HS-2017-BayesianEstimation,DHS-2020-SpikedMatrixHeavyTailed,SW-2020-LowDegreeEstimation}, and optimization \cite{GJW-2020-LowDegreeOptimization,Wein-2020-LowDegreeIndependentSet}---practitioners also sometimes make choices in this step to keep the low-degree method analytically tractable.
On the other hand, the problems studied to date with the low-degree method involve only a few different families of probability distributions over their inputs, and often the low-degree analysis takes advantage of an intimate coupling between simplifications made in the success criteria and convenient distribution-specific identities.
Our goal will be to investigate the most common measures of efficacy for low-degree polynomial algorithms for a wider range of probability distributions, probing whether some of the success of the low-degree method might be due to this kind of fortuitous coincidence.

We focus on hypothesis testing: given two sequences of probability distributions $\PP_n$ and $\QQ_n$, we consider whether a low-degree polynomial can correctly distinguish $\by \sim \PP_n$ from $\by \sim \QQ_n$ with high probability as $n \to \infty$.
The most direct way to formalize this question, by analogy with the classical Neyman-Pearson lemma \cite{NP-1933-MostEfficientTests}, is to ask, for some degree bound $D(n)$ corresponding to a computational budget: \emph{do there exist polynomials $f_n \in \RR[\by]$ with $\deg(f(n)) \leq D(n)$ and thresholds $\xi_n \in \RR$ such that}
\begin{equation}
    \label{eq:intro-poly-thresh}
    \lim_{n \to \infty} \PP_n[f_n(\by) > \xi_n] = \lim_{n \to \infty} \QQ_n[f_n(\by) < \xi_n] = 1 \text{ ?}
\end{equation}
Unfortunately, it appears difficult to prove lower bounds against this class of algorithms.
Instead, recent research has focused on the following ``averaged'' version of the above: \emph{do there exist polynomials $f_n \in \RR[\by] \cap L^2(\QQ_n)$ with $\deg(f(n)) \leq D(n)$ such that}
\begin{equation}
    \label{eq:intro-low-deg}
    \lim_{n \to \infty} \, \Ex_{\by \sim \PP_n} f_n(\by) = +\infty \, , \text{ while } \Ex_{\by \sim \QQ_n} f_n(\by)^2 = 1 \text{ ?}
\end{equation}
As we will see in Section~\ref{sec:low-deg}, here the optimal $f_n$ may be computed explicitly using orthogonal polynomials, at least for simple models of hypothesis testing.
The resulting predictions have been remarkably consistent with other, more technically-challenging methods for models including the planted clique problem \cite{BHKKMP-2019-PlantedClique}, the stochastic block model \cite{HS-2017-BayesianEstimation}, and spiked matrix and tensor models \cite{Hopkins-2018-Thesis,KWB-2019-NotesLowDegree,BKW-2019-ConstrainedPCA, DKWB-2019-SubexponentialTimeSparsePCA}.

Yet, our current understanding of this heuristic and the circumstances under which it is accurate remains incomplete in the regard mentioned above: these examples all involve observations $\by$ having only Gaussian or Bernoulli distributions.
Moreover, the low-degree analysis often hinges on special algebraic and analytic properties of these distributions and their orthogonal polynomials.
In this paper, we will describe the results of applying the low-degree method to hypothesis testing in many further exponential families, making new predictions and suggesting some challenging examples that we hope will stimulate further research in this direction.

\subsection{Natural exponential families with quadratic variance function}

We first introduce the distributions we will study, which form \emph{exponential families}.
Throughout this section we follow Morris' presentation in the seminal papers \cite{Morris-1982-NEFQVF,Morris-1983-NEFQVFStats}, which first recognized the many shared statistical properties of these families.
We start by recalling the basic notions of exponential families.
\begin{definition}
    Let $\rho_0$ be a probability measure over $\RR$ which is not a single atom.
    Let $\psi(\theta) \colonequals \log \EE_{x \sim \rho_0}[\exp(\theta x) ]$ and $\Theta \colonequals \{\theta \in \RR: \psi(\theta) < \infty\}$.
    Then, the \emph{natural exponential family (NEF) generated by $\rho_0$} is the family of probability measures $\rho_{\theta}$, for $\theta \in \Theta$, given by
    \begin{equation}
        d\rho_\theta(x) \colonequals \exp(\theta x - \psi(\theta)) d\rho_0(x).
    \end{equation}
\end{definition}

Sometimes, the ``natural parameter'' $\theta$ is the mean of $\rho_{\theta}$ or a translation thereof; however, as the next example shows, the mapping $\theta \mapsto \EE_{x \sim \rho_{\theta}}[x]$ need not be particularly simple in general.
\begin{example}
    Taking $d\rho_0(x) = e^{-x}\One\{x \geq 0\}dx$, we have $\Theta = (-\infty, 1)$, and this generates the NEF of exponential distributions, $d\rho_{\theta}(x) = (1 - \theta)e^{-(1 - \theta)x}\One\{x \geq 0\}dx$.
    The mean of $\rho_{\theta}$ is $\EE_{x \sim \rho_{\theta}}[x] = \frac{1}{1 - \theta}$.
\end{example}
\noindent
Nonetheless,  it is always possible to reparametrize any NEF in terms of the mean in the following way.
The cumulant generating functions of the $\rho_\theta$ are merely translations of $\psi$, $\psi_{\theta}(\eta) \colonequals \log \EE_{x \sim \rho_{\theta}}[\exp(\eta x)] = \psi(\theta + \eta) - \psi(\theta)$.
Therefore, the means and variances of $\rho_{\theta}$ are
\begin{align}
  \mu_\theta &\colonequals \EE_{x \sim \rho_\theta}[x] = \psi_{\theta}^\prime(0) = \psi^\prime(\theta), \\
  \sigma_\theta^2 &\colonequals \EE_{x \sim \rho_\theta}[x^2] - (\EE_{x \sim \rho_\theta} [x])^2 = \psi_{\theta}^{\prime\prime}(0) = \psi^{\prime\prime}(\theta).
\end{align}
Since $\rho_0$ is not an atom, neither is any $\rho_\theta$, and thus $\psi^{\prime\prime}(\theta) = \sigma_\theta^2 > 0$ for all $\theta \in \RR$.
Therefore, $\psi^\prime$ is strictly increasing, and thus one-to-one.
Letting $\Omega \subseteq \RR$ equal the image of $\RR$ under $\psi^\prime$ (some open interval, possibly infinite on either side, of $\RR$), we see that $\rho_\theta$ admits an alternative mean parametrization, as follows.
\begin{definition}
    If $\rho_0$ generates the NEF $\rho_{\theta}$, then we let $\widetilde{\rho}_{\mu} = \rho_{(\psi^\prime)^{-1}(\mu)}$ over $\mu \in \Omega$.
    The \emph{mean-parametrized NEF generated by $\rho_0$} is the family of probability measures $\widetilde{\rho}_{\mu}$, for $\mu \in \Omega$.
\end{definition}

By the same token, within an NEF, the variance is a function of the mean.
In the above setting, we denote this function as follows.
\begin{definition}
    For $\mu \in \Omega$, define the \emph{variance function} $V(\mu) \colonequals \sigma_{(\psi^\prime)^{-1}(\mu)}^2 = \psi^{\prime\prime}((\psi^\prime)^{-1} (\mu))$.
\end{definition}
\noindent
The function $V(\mu)$ is simple for many NEFs that are theoretically important, and its simplicity appears to be a better measure of the ``canonicity'' of an NEF than, e.g., the simplicity of the probability density or mass function.
Specifically, the most important NEFs have $V(\mu)$ a low-degree polynomial: $V(\mu)$ is constant only for the Gaussian NEF with some fixed variance, and linear only for the Poisson NEF and affine transformations thereof.

The situation becomes more interesting for $V(\mu)$ quadratic, which NEFs Morris gave the following name.
\begin{definition}
    If $V(\mu) = v_0 + v_1\mu + v_2\mu^2$ for some $v_i \in \RR$, then we say that $\rho_0$ generates a \emph{natural exponential family with quadratic variance function (NEF-QVF)}.
\end{definition}
\noindent
NEF-QVFs are also sometimes called the \emph{Morris class} of exponential families.
One of the main results of \cite{Morris-1982-NEFQVF} is a complete classification of the NEF-QVFs, as follows.

\begin{table}
    \begin{center}
    \begin{tabular}{lccc}
      \hline
      \textbf{Name} & $d\rho_0(x)$ & \textbf{Support} & $V(\mu)$ \\
      \hline \\[-0.9em]
      Gaussian (variance $\sigma^2 > 0$) & $\frac{1}{\sqrt{2\pi\sigma^2}}\exp(-\frac{1}{2\sigma^2}x^2)dx$ & $\RR$ & $\sigma^2$ \\[0.25em]
      Poisson & $\frac{1}{e}\frac{1}{x!}$ & $\ZZ_{\geq 0}$ & $\mu$ \\[0.25em]
      Gamma (shape $\alpha > 0$) & $\frac{1}{\Gamma(\alpha)}x^{\alpha - 1}e^{-x}dx$ & $(0, +\infty)$ & $\frac{1}{\alpha}\mu^2$ \\[0.25em]
      Binomial ($m$ trials) & $\frac{1}{2^m} \binom{m}{x}$ & $\{0, \ldots, m\}$ & $-\frac{1}{m}\mu^2 + \mu$ \\[0.25em]
      Negative Binomial ($m$ successes) & $\frac{1}{2^{m + x}}\binom{x + m - 1}{x}$ & $\ZZ_{\geq 0}$ & $\frac{1}{m}\mu^2 + \mu$ \\[0.25em]
      Generalized Hyperbolic Secant (shape $r > 0$) & (\cite{Morris-1982-NEFQVF}, Section 5) & $\RR$ & $\frac{1}{r}\mu^2 + r$ \\[0.25em]
      \hline
    \end{tabular}
    \end{center}

    \vspace{-1em}

    \caption{\textbf{The six basic NEF-QVFs.} We describe the six natural exponential families with quadratic variance function from which, according to the results of \cite{Morris-1982-NEFQVF}, any such family can be generated by an affine transformation.
      The sixth ``generalized hyperbolic secant'' family is more complicated to describe, but one representative distribution generating the $r = 1$ family has density $\frac{1}{2}\sech(\frac{\pi}{2}x)dx$, and may be thought of as a smoothed Laplace distribution.}
    \label{tab:nef-qvf}
\end{table}

\begin{proposition}
    Any NEF-QVF can be obtained by an affine transformation ($X \mapsto aX + b$ applied to the underlying random variables) of one of the six families listed in Table~\ref{tab:nef-qvf}.
    Conversely, any affine transformation of an NEF-QVF yields another NEF-QVF.
\end{proposition}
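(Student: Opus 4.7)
The plan is to reduce the classification to an analysis of how the variance function $V(\mu)$ transforms under affine transformations of the underlying random variable, and then to enumerate the orbits of quadratic polynomials under that action. For the converse direction, given $Y = aX + b$ with $a \neq 0$, the cumulant generating function of $Y$ is $\psi_Y(\eta) = b\eta + \psi_X(a\eta)$, which still has strictly positive second derivative and hence generates an NEF. Differentiating gives $\mu_Y = a\mu_X + b$ and
\[
V_Y(\mu_Y) = a^2\, V_X\!\left(\frac{\mu_Y - b}{a}\right),
\]
which remains a polynomial of the same degree in $\mu_Y$; in particular, the QVF property is preserved.

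For the forward direction, the key lemma is that the variance function essentially determines the NEF up to affine reparametrization of the underlying variable. This follows because $V(\mu) = \psi''(\theta)$ together with $\mu = \psi'(\theta)$ gives $d\theta/d\mu = 1/V(\mu)$, so integrating recovers $\theta$ as a function of $\mu$ up to one additive constant, and a further integration recovers $\psi$ up to an additive constant and a linear term; these two degrees of freedom correspond precisely to the two parameters of an affine transformation. Consequently, it suffices to classify the orbits of quadratic polynomials $V(\mu) = v_0 + v_1\mu + v_2\mu^2$ under the displayed action, and to exhibit a single generating measure in each orbit.

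I would then split cases by the leading coefficient $v_2$ and, when $v_2 \neq 0$, by the discriminant $\Delta = v_1^2 - 4 v_0 v_2$, which together with the sign of $v_2$ form a complete set of invariants for the action (up to the sign of $a$). If $v_2 = v_1 = 0$, then $V$ is a positive constant, giving the Gaussian family. If $v_2 = 0$ and $v_1 \neq 0$, affine normalization yields $V(\mu) = \mu$, the Poisson case. If $v_2 > 0$, the sub-cases $\Delta > 0$, $\Delta = 0$, and $\Delta < 0$ correspond respectively to Negative Binomial, Gamma, and Generalized Hyperbolic Secant. If $v_2 < 0$, then positivity of $V$ on the interior of its domain $\Omega$ forces $\Delta > 0$, and normalization produces the Binomial variance function. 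For each case I would write down the explicit $(a, b)$ achieving the normalization.

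The main obstacle is the existence half: for each of the six normalized variance functions, one must exhibit a probability measure $\rho_0$ that actually generates an NEF with that $V(\mu)$. For Gaussian, Poisson, Gamma, Binomial, and Negative Binomial this is a direct computation from the densities in Table~\ref{tab:nef-qvf} together with the resulting formula for $\psi(\theta) = \log \EE_{x \sim \rho_0}[\exp(\theta x)]$. The GHS family is the subtlest, since no elementary closed form for $\rho_0$ exists; following Morris, one constructs it via an inverse Fourier transform and verifies the claimed $V(\mu) = r + \mu^2/r$ by direct differentiation of $\psi$. Once existence is settled, uniqueness within each orbit follows from the determinacy lemma above, completing the classification.
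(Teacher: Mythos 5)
The paper does not actually prove this proposition; it is quoted directly from Morris' classification theorem in \cite{Morris-1982-NEFQVF}, as the caption of Table~\ref{tab:nef-qvf} indicates. So you are effectively reconstructing Morris' argument, and your skeleton (show $V$ on $\Omega$ determines the NEF, compute how $V$ transforms under $X \mapsto aX+b$, classify quadratics by invariants of that action, then exhibit a generator in each orbit) is indeed the right skeleton. However, there is a genuine gap in the binomial case. Under the action $V(\mu) \mapsto a^2\, V\!\left(\frac{\mu-b}{a}\right)$ one computes $v_2' = v_2$ exactly, so $v_2$ is an \emph{invariant}, not merely its sign. Consequently, if an NEF-QVF had $v_2 < 0$ with $v_2 \notin \{-1/m : m \in \ZZ_{\geq 1}\}$, no affine change of variable could carry its variance function to that of a Binomial$(m)$ family, and the forward implication would fail. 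Your case analysis simply asserts that ``normalization produces the Binomial variance function,'' which tacitly assumes $v_2 = -1/m$; the genuinely hard part of Morris' theorem is precisely the non-existence argument showing $v_2 < 0 \Rightarrow -1/v_2 \in \ZZ_{\geq 1}$ (via boundedness of the support and an integrality constraint). As written, the proposal never addresses this, so the classification for $v_2 < 0$ is incomplete.

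Two smaller inaccuracies, neither fatal but both worth fixing. First, the invariants are misidentified: since $v_2' = v_2$ and $\Delta' = a^2\Delta$, the invariants of the action are $v_2$ itself together with $\mathrm{sign}(\Delta)$, not ``$\Delta$ together with the sign of $v_2$.'' (It is exactly the invariance of $v_2$ that makes the gap above bite.) Second, the heuristic that the two integration constants in recovering $\psi$ from $V$ ``correspond precisely to the two parameters of an affine transformation'' conflates two different freedoms: the constant in $\theta(\mu)$ corresponds to re-choosing which member of the NEF plays the role of $\rho_0$, and the constant in $\psi$ is fixed by $\psi(0)=0$; neither is a transformation of the underlying random variable. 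The correct statement of the key lemma is that $(V, \Omega)$ determines the NEF uniquely, and the affine freedom lives entirely in the induced action on $(V,\Omega)$.
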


We will study \emph{hypothesis testing} in high-dimensional products of NEF-QVFs.
That is, we seek to distinguish with high probability two sequences of distributions: the \emph{null} distributions $\QQ_n$, and the \emph{planted} or \emph{alternative} distributions $\PP_n$.
We consider two possible relationships between the two sequences: (1) \emph{kin spiking}, where $\PP_n$ belongs to the same NEF as $\QQ_n$ but has a different mean, and (2) \emph{additive spiking}, where $\PP_n$ is a translation of $\QQ_n$, possibly not belonging to the same NEF.
Kin spiking will be mathematically more elegant, but additive spiking will allow us to treat a spiked matrix model similar to those studied in earlier works.
\begin{definition}[Kin-spiked NEF-QVF model]
    \label{def:kin-spiked}
    Let $\widetilde{\rho}_{\mu}$ be a mean-parametrized NEF-QVF over $\mu \in \Omega \subseteq \RR$.\footnote{It will not matter for our purposes what the base measure $\rho_0$ is.}
    Let $N = N(n) \in \NN$ and $\mu_{n, i} \in \Omega$ for each $n \in \NN$ and $i \in [N(n)]$.
    Let $\sP_n$ be a probability measure over $\Omega^{N(n)}$.
    Then, define sequences of probability measures $\PP_n, \QQ_n$ as follows:
    \begin{itemize}
    \item Under $\QQ_n$, draw $y_i \sim \widetilde{\rho}_{\mu_{n, i}}$ independently for $i \in [N(n)]$.
    \item Under $\PP_n$, draw $\bx \sim \sP_n$, and then draw $y_i \sim \widetilde{\rho}_{x_i}$ independently for $i \in [N(n)]$.
    \end{itemize}
\end{definition}

\begin{definition}[Additively-spiked NEF-QVF model]
    \label{def:additively-spiked}
    In the same setting as Definition~\ref{def:kin-spiked} but with $\sP_n$ now a probability measure over $\RR^{N(n)}$, define:
    \begin{itemize}
    \item Under $\QQ_n$, draw $y_i \sim \widetilde{\rho}_{\mu_{n, i}}$ independently for $i \in [N(n)]$ (the same as above).
    \item Under $\PP_n$, first draw $\bx \sim \sP_n$ and $z_i \sim \widetilde{\rho}_{\mu_{n, i}}$ independently for $i \in [N(n)]$, and observe $y_i = x_i + z_i$.
    \end{itemize}
\end{definition}

In either case, we will focus on the problem of \emph{strong detection}: given a particular time budget $T(n)$, does there exist a sequence of tests $f_n: \RR^{N(n)} \to \{\mathtt{p}, \mathtt{q}\}$ computable in time $T(n)$ and having
\begin{equation}
    \label{eq:test-success}
    \lim_{n \to \infty} \QQ_n[f_n(\by) = \texttt{q} ] = \lim_{n \to \infty} \PP_n[f_n(\by) = \texttt{p} ] = 1 \, \text{?}
\end{equation}
\noindent
To the best of our knowledge, with the exception of negatively-spiked Gaussian Wishart models \cite{BKW-2019-ConstrainedPCA,BBKMW-2020-SpectralPlantingColoring}, all previous applications of the low-degree method to hypothesis testing in the literature may be expressed as kin-spiked models in the Gaussian or Bernoulli NEF-QVFs.\footnote{We remark that kin and additive spiking are equivalent in the Gaussian NEF-QVF.}

\subsection{The low-degree likelihood ratio method}
\label{sec:low-deg}

We now describe the calculation, based on the question \eqref{eq:intro-low-deg} mentioned earlier, that we will take as a heuristic proxy for the difficulty of hypothesis testing.
This will suggest that the problem of efficient strong detection described above may be addressed with computations involving the likelihood ratio,
\begin{equation}
    L_n(\by) \colonequals \frac{d\PP_n}{d\QQ_n}(\by).
\end{equation}
These techniques work in the Hilbert space $L^2(\QQ_n)$, having inner product $\langle f, g \rangle \colonequals \EE_{\by \sim \QQ_n}[f(\by)g(\by)]$ and associated norm $\|f\|^2 \colonequals \langle f, f \rangle$.

The following is the basic statement relating \eqref{eq:intro-low-deg} to the likelihood ratio, which follows from a linear-algebraic calculation.
\begin{proposition}[\cite{HS-2017-BayesianEstimation,HKPRSS-2017-SOSSpectral,Hopkins-2018-Thesis}]
    \label{prop:low-deg-opt}
    Denote by $L_n^{\leq D}$ the orthogonal projection of $L_n$ to the subspace of $L^2(\QQ_n)$ consisting of polynomials having degree at most $D$.
    Then,
    \begin{equation}
        \left\{\begin{array}{ll} \text{maximize} & \EE_{\by \sim \PP_n} f_n(\by) \\[0.5em] \text{subject to} & f_n \in \RR[\by]_{\leq D} \cap L^2(\QQ_n) \\[0.5em] & \EE_{\by \sim \QQ_n} f_n(\by)^2 = 1 \end{array}\right\} = \|L_n^{\leq D}\|,
    \end{equation}
    with optimizer $f_n^{\star} = L_n^{\leq D} / \|L_n^{\leq D}\|$.
\end{proposition}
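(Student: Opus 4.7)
The plan is to reduce the constrained optimization problem to a straightforward application of Cauchy--Schwarz in the Hilbert space $L^2(\QQ_n)$, using the likelihood ratio as the natural dual vector to the expectation functional under $\PP_n$. First, I would rewrite the objective via change of measure: for any $f_n \in L^2(\QQ_n)$,
\[
\Ex_{\by \sim \PP_n} f_n(\by) \;=\; \Ex_{\by \sim \QQ_n}\!\bigl[f_n(\by)\,L_n(\by)\bigr] \;=\; \langle f_n, L_n \rangle.
\]
This rewriting implicitly requires $L_n \in L^2(\QQ_n)$, which I would note is needed for the problem to even have finite value; in the concrete NEF-QVF models considered later this will be verified separately.

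Next, I would use the defining property of the orthogonal projection $L_n^{\leq D}$: since the candidate $f_n$ lies in the closed subspace $\RR[\by]_{\leq D} \cap L^2(\QQ_n)$ onto which we project, the residual $L_n - L_n^{\leq D}$ is orthogonal to $f_n$, so
\[
\langle f_n, L_n \rangle \;=\; \langle f_n, L_n^{\leq D} \rangle \,+\, \langle f_n, L_n - L_n^{\leq D} \rangle \;=\; \langle f_n, L_n^{\leq D} \rangle.
\]
The optimization thus collapses to maximizing $\langle f_n, L_n^{\leq D} \rangle$ over unit vectors $f_n$ in the subspace $\RR[\by]_{\leq D} \cap L^2(\QQ_n)$, where $L_n^{\leq D}$ itself lives.

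Cauchy--Schwarz then gives the upper bound $\langle f_n, L_n^{\leq D}\rangle \leq \|f_n\|\cdot\|L_n^{\leq D}\| = \|L_n^{\leq D}\|$, with equality precisely when $f_n$ is a nonnegative scalar multiple of $L_n^{\leq D}$. Combining with the unit-norm constraint pins down the unique maximizer $f_n^\star = L_n^{\leq D}/\|L_n^{\leq D}\|$, attaining the value $\|L_n^{\leq D}\|$.

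There is not really a deep obstacle here; the statement is essentially a finite-dimensional Hilbert space fact dressed in probabilistic notation. The only genuine care points, which I would flag but not belabor, are ensuring (i) that $\RR[\by]_{\leq D} \cap L^2(\QQ_n)$ is actually closed in $L^2(\QQ_n)$ so that the projection $L_n^{\leq D}$ exists and is itself a polynomial (immediate because the subspace is finite-dimensional whenever the relevant monomials are in $L^2(\QQ_n)$), and (ii) that $\|L_n^{\leq D}\| > 0$ so that the maximizer formula is meaningful, which is guaranteed as soon as $\PP_n \not\equiv \QQ_n$ and the constant function $1$ lies in the degree-$D$ subspace.
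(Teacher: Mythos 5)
Your proof is correct and is exactly the "linear-algebraic calculation" the paper alludes to without spelling out (the proposition is cited to prior work and no proof is given in the paper). The three moves — change of measure to turn the objective into an inner product $\langle f_n, L_n\rangle$, replacement of $L_n$ by its projection $L_n^{\leq D}$ via orthogonality of the residual to the feasible subspace, and Cauchy--Schwarz to extract the norm and identify the maximizer — are the standard and essentially unique route.

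One small over-statement worth noting: you write that $L_n \in L^2(\QQ_n)$ is needed for the problem "to even have finite value." For finite $D$ this is stronger than necessary. It suffices that $L_n$ have finite inner products with the (finitely many) monomials of degree at most $D$, i.e., that $\PP_n$ have finite moments up to order $D$; the projection $L_n^{\leq D}$ is then well-defined and lies in $L^2(\QQ_n)$, and the orthogonality step $\langle f_n, L_n - L_n^{\leq D}\rangle = 0$ still makes sense as a convergent integral even if $L_n - L_n^{\leq D}$ is not itself square-integrable. Full $L_n \in L^2(\QQ_n)$ is only genuinely required at $D = +\infty$. This does not affect the validity of your argument in the regime the paper uses it, but it is the cleaner hypothesis to state.
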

\noindent
We remark that if we took $D = +\infty$ here and optimized over all of $L^2(\QQ_n)$, then the optimizer, again by a straightforward calculation, would simply be the likelihood ratio $f_n^{\star} = L_n / \|L_n\|$, and the optimal value its norm $\|L_n\|$.
The likelihood ratio is closely related to optimal hypothesis testing via the Neyman-Pearson lemma \cite{NP-1933-MostEfficientTests}, and the norm $\|L_n\|$ also plays a role through Le Cam's notion of \emph{contiguity} and the associated \emph{second moment method} \cite{LCY-2012-AsymptoticsStatistics}.
In particular, if $\|L_n\|$ is bounded as $n \to \infty$, then no test can achieve strong detection, a fact that has been used to great effect for several high-dimensional problems in recent literature \cite{MRZ-2015-LimitationsSpectral,BMVVX-2018-InfoTheoretic,PWBM-2018-PCAI}.

The ``truncated'' variant $L_n^{\leq D}$ is called the \emph{low-degree likelihood ratio}, and emerges according to the above result as the main object conjecturally controlling the computational cost of hypothesis testing.
The basic conjecture concerning the norm of the low-degree likelihood ratio is as follows.
\begin{conjecture}[Informal; Conjecture 2.2.4 of \cite{Hopkins-2018-Thesis} and Conjecture 1.16 of \cite{KWB-2019-NotesLowDegree}]
    \label{conj:low-deg}
    For ``sufficiently nice'' sequences of probability measures $\PP_n$ and $\QQ_n$, if there exists $\epsilon > 0$, $D = D(n) \geq (\log n)^{1 + \epsilon}$, and a constant $K$ such that $\|L_n^{\leq D}\| \leq K$ for all $n$, then there is no sequence of $f_n$ that are computable in polynomial time in $n$ and that achieve the strong detection conditions \eqref{eq:test-success}.
\end{conjecture}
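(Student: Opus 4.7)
Because the statement is explicitly framed as an informal conjecture, I do not expect to produce a literal proof; the realistic plan is instead to carve out a precise sub-statement that is provable, and then assemble circumstantial evidence for the full version. The starting point is Proposition~\ref{prop:low-deg-opt}, which already identifies $\|L_n^{\leq D}\|$ with the largest $L^2(\QQ_n)$-normalized ``advantage'' achievable by any degree-$D$ polynomial. Morally, the hope is that every polynomial-time tester one can write down---spectral statistics, subgraph counts, approximate message-passing iterates, belief propagation at fixed depth---can be approximated in $L^2(\QQ_n)$ by a polynomial of degree polylogarithmic in $n$, so that a bound on $\|L_n^{\leq D}\|$ simultaneously rules all of them out.

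The concrete formal step I would prove first is the sum-of-squares version of the conjecture: if $\|L_n^{\leq D}\|$ is bounded then the degree-$2D$ SoS relaxation of the natural refutation problem fails. This is the pseudocalibration program of \cite{BHKKMP-2019-PlantedClique,HKPRSS-2017-SOSSpectral,Hopkins-2018-Thesis}: one builds a candidate pseudoexpectation from the truncation $L_n^{\leq D}$, assembles the moment matrix, and controls positivity by decomposing into orthogonal ``graph matrix'' blocks whose norms are governed, for our families, by the NEF-QVF orthogonal polynomial identities developed earlier in the paper. A secondary and much easier step is to verify directly that any spectral test thresholding a constant-degree polynomial in the entries of $\by$---and more generally any eigenvalue statistic of a degree-$D$ tensor built from $\by$---is captured by Proposition~\ref{prop:low-deg-opt}.

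For the evidential half I would recompute, for each of the six NEF-QVFs in Table~\ref{tab:nef-qvf}, the threshold at which $\|L_n^{\leq D}\|$ blows up and check that it coincides with the best-known efficient test. The channel-monotonicity and $z$-score overlap results advertised in the abstract are precisely this kind of consistency check, and crucially they also predict \emph{where} low-degree polynomials should fail---a prerequisite for understanding the boundary of the conjecture.

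The main obstacle is the qualifier ``sufficiently nice.'' The third contribution advertised in the abstract is a spiked matrix model in which the naive low-degree prediction is in fact \emph{incorrect}: $\|L_n^{\leq D}\|$ remains well-behaved for the usual class of bounded-total-degree polynomials, yet an efficient test exists that first inspects $\by$ in order to choose which subroutine to run, and this ``exploration before inference'' is cheap algorithmically but apparently requires polynomials of unbounded degree in the individual entries to simulate. Any complete proof must therefore either restrict the class of planted/null pairs so as to exclude such phenomena, or else enlarge the low-degree class to admit a bounded amount of non-polynomial preprocessing. Pinning down the right regularity hypothesis---and proving, rather than merely stipulating, that it excludes the known counterexamples---is the step where I expect essentially all the work to live.
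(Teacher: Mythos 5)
You have correctly recognized that the paper does not prove this statement: it is stated as an informal conjecture, attributed to Hopkins' thesis and the KWB survey, and no proof of it exists in this paper or elsewhere. There is therefore no ``paper's own proof'' to compare against, and your decision to instead map out a proof program plus evidence is the right call. Your framing tracks the paper's own perspective closely: Proposition~\ref{prop:low-deg-opt} is indeed the precise variational object being controlled, the pseudocalibration route to SoS lower bounds is the standard formal partial result obtainable from a low-degree bound, and recomputing thresholds across the NEF-QVFs is exactly the kind of consistency check the paper carries out.

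Most importantly, your final paragraph anticipates the paper's own central caveat. Theorem~\ref{thm:mixed-spiked-mx} exhibits the model $\Mix(\alpha,\lambda)$ in which the low-degree likelihood ratio norm stays bounded (with no degree cap at all, only the $L^2(\QQ_n)$ integrability constraint) while a polynomial-time exploration-before-inference algorithm does achieve strong detection for $\lambda>\lambda_*$. So the conjecture as literally stated is false without a niceness hypothesis excluding such models, and the paper's contribution on this front is precisely to demonstrate that the hypothesis is load-bearing rather than cosmetic. One small correction to your phrasing: the obstruction is not that simulating the test requires unbounded entrywise degree; it is that the heavy-tailed mixture component forces every polynomial in $L^2(\QQ_n)$ to have entrywise degree at most $\alpha/2$, and by Theorem~\ref{thm:spiked-mx} polynomials of bounded entrywise degree are provably suboptimal for the hyperbolic-secant branch. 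With that caveat, your proposal is aligned with the paper and identifies the genuine open difficulty.
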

\noindent
Different scalings of $D(n)$ are also conjectured to capture hardness of testing with various other computational time budgets ranging from polynomial to exponential (see, e.g., Section~3 of \cite{KWB-2019-NotesLowDegree} and \cite{DKWB-2019-SubexponentialTimeSparsePCA,DKWB-2020-AverageCaseRIP}); roughly speaking, if $\|L_n^{\leq D(n)}\|$ is bounded, then we expect strong detection in time $T(n) = \exp(D(n))$ to be impossible.

These preliminaries established, we may outline our contributions at a high level: we will compute and bound $\|L_n^{\leq D}\|$ for kin-spiked and additively-spiked NEF-QVF models, and compare these results with Conjecture~\ref{conj:low-deg} and variants thereof to produce new predictions and challenges for the low-degree method.

\section{Main results}

\subsection{Low-degree analysis and $z$-score overlap}
Our first result is a general bound on $\|L_n^{\leq D}\|$ in kin-spiked NEF-QVF models.
Before presenting the statement, we highlight a previously-known special case that the general result will resemble: for the NEF-QVF of Gaussian distributions with unit variance, \cite{KWB-2019-NotesLowDegree} showed the following formula:
\begin{equation}
    \|L_n^{\leq D}\|^2  = \Ex_{\bx^1, \bx^2} \exp^{\leq D}(\langle \bx^1, \bx^2 \rangle),
\end{equation}
where $\bx^i$ are drawn independently from $\sP_n$ and $\exp^{\leq D}(t) = \sum_{k = 0}^D t^k / k!$ is the order-$D$ Taylor expansion of the exponential function.
This formula is both elegant in principle, showing that the behavior of $\|L_n^{\leq D}\|$ may be reduced to the behavior of the scalar ``overlap'' random variables $\langle \bx^1, \bx^2 \rangle$, and leads to simplified proofs of low-degree analyses in practice.
We show that a similar result holds in any NEF-QVF, so long as we (1)~replace the equality with a suitable inequality, (2)~replace $\bx^i$ with suitably centered and normalized $z$-scores, and (3)~replace the exponential function with a suitable relative, which will depend on the NEF-QVF's value of $v_2$, the coefficient of $\mu^2$ in the variance function.

We first briefly describe the set of all possible values of $v_2$.
\begin{proposition}
    Let $\mathcal{V} \colonequals [0, +\infty) \cup \{-\frac{1}{m}: m \in \ZZ_{\geq 1}\} \subset \RR$.
    Then, for any NEF-QVF, $v_2 \in \mathcal{V}$.
    Conversely, for any $v \in \mathcal{V}$, there exists an NEF-QVF with $v_2 = v$.
    The only NEF-QVFs with $v_2 < 0$ are the binomial families (including Bernoulli), and the only NEF-QVFs with $v_2 = 0$ are the Gaussian and Poisson families.
\end{proposition}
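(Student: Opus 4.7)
The plan is to reduce the proposition to a finite check using the classification cited immediately above, which asserts that every NEF-QVF is an affine transformation of one of the six families in Table~\ref{tab:nef-qvf}. The key auxiliary observation is that the leading coefficient $v_2$ of the variance function is invariant under such affine reparametrizations; once this is established, one simply reads $v_2$ off of the table and takes the union over the six rows.

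First I would verify affine invariance. If $X$ has variance function $V_X(\mu) = v_0 + v_1 \mu + v_2 \mu^2$ and $Y = aX + b$ with $a \neq 0$, then $\mu_Y = a \mu_X + b$ and the variance of $Y$ equals $a^2$ times that of $X$, so
$V_Y(\mu_Y) = a^2 V_X\!\big((\mu_Y - b)/a\big) = v_2 \mu_Y^2 + (\text{lower-order terms in }\mu_Y)$,
which preserves the quadratic coefficient. Next I would tabulate $v_2$ across the six basic families: Gaussian and Poisson both give $v_2 = 0$; Gamma of shape $\alpha > 0$ gives $v_2 = 1/\alpha$, sweeping out $(0, +\infty)$; Binomial with $m$ trials gives $v_2 = -1/m$ for each $m \in \ZZ_{\geq 1}$; Negative Binomial with $m$ successes gives $v_2 = 1/m$; and the GHS family of shape $r > 0$ gives $v_2 = 1/r$. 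Combining affine invariance with the classification, the set of achievable $v_2$ values is exactly the union of these contributions, namely $[0, +\infty) \cup \{-1/m : m \in \ZZ_{\geq 1}\} = \mathcal{V}$, which gives both containments at once (forward by invariance, reverse by explicit realization).

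The sign-specific claims follow by the same inspection: the only rows of Table~\ref{tab:nef-qvf} with $v_2 = 0$ are the Gaussian and Poisson rows, and the only row with $v_2 < 0$ is the Binomial row, so affine invariance plus the classification identifies the families attaining these values up to affine transformation. I do not anticipate a substantive obstacle; the entire argument is organized bookkeeping around Morris' classification, which is invoked as a black box. The only genuine calculation is the one-line verification that $v_2$ is preserved under $Y = aX + b$, and everything else is enumeration.
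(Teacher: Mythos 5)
Your proof is correct, and it makes explicit exactly the argument the paper implicitly relies on: the paper states this proposition without a separate proof, evidently treating it as a direct corollary of Morris' classification stated just above and the values of $V(\mu)$ listed in Table~\ref{tab:nef-qvf}. Your one-line verification that $v_2$ is invariant under $X \mapsto aX + b$ (with $a \neq 0$) is the only non-bookkeeping step, and it is exactly right; the rest is reading off the table, as you say.
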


\begin{definition}
    \label{def:f}
    For $t \in \RR$ and $v \in \sV$, define
    \begin{equation}
        f(t; v) \colonequals \left\{\begin{array}{ll} e^t & \text{if } v = 0, \\ (1 - vt)^{-1 / v} & \text{if } v \neq 0 \text{ and } t < 1 / |v|, \\ +\infty & \text{if } v > 0 \text{ and } t \geq 1 / |v|. \end{array}\right.
    \end{equation}
    Moreover, for $D \in \NN$, let $f^{\leq D}(t; v)$ denote the order-$D$ Taylor expansion of $f(t; v)$ about $t = 0$ for fixed $v$, and let $f^{\leq +\infty}(t; v) \colonequals f(t; v)$.
\end{definition}
\noindent
See Figure~\ref{fig:fs} for an illustration of these functions ``sandwiching'' the exponential.

\begin{figure}
    \begin{center}
        \includegraphics[scale=0.8]{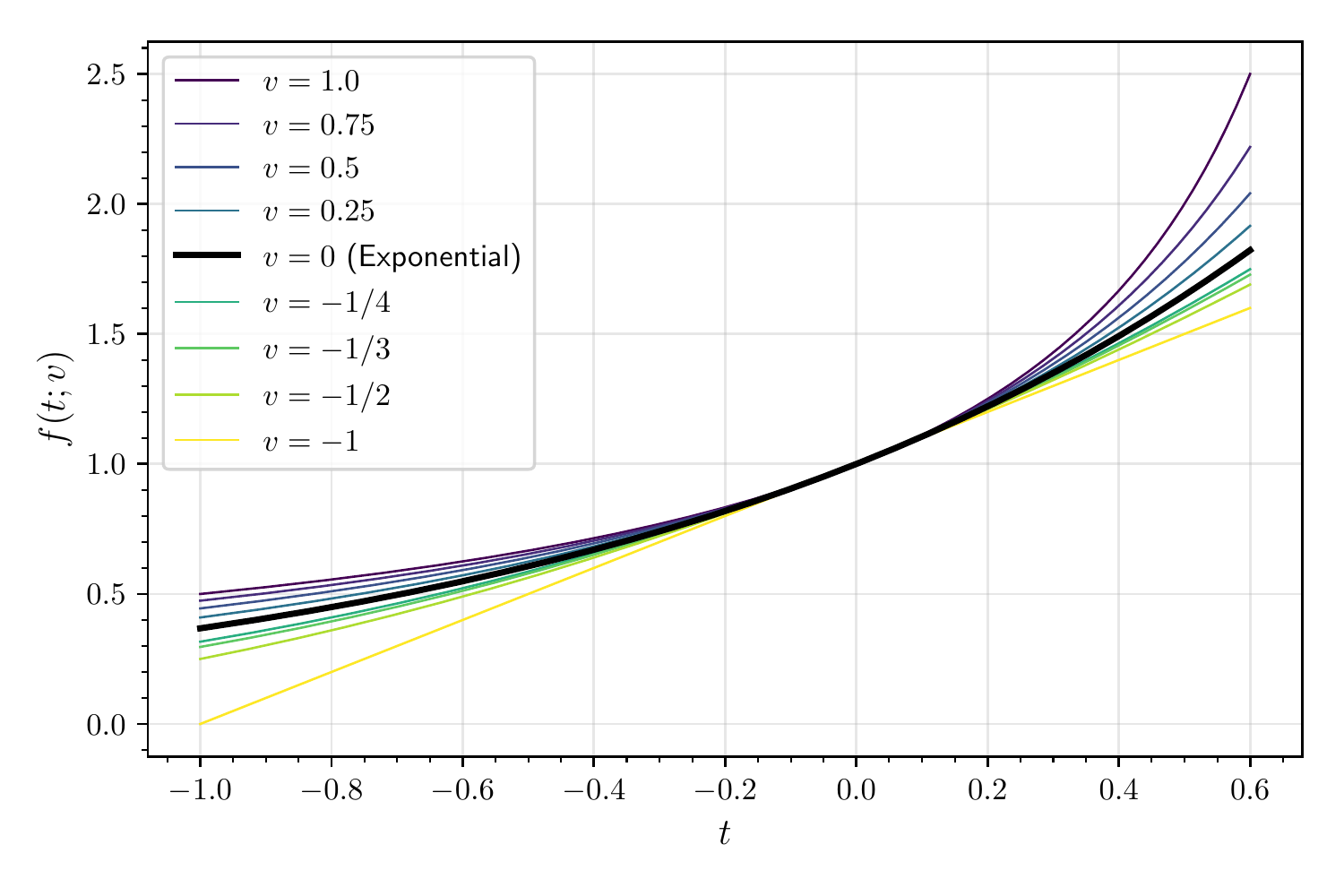}
    \end{center}
    \vspace{-2em}
    \caption{We plot $f(t; v)$ near $t = 0$ for various values of $v$, emphasizing the monotonicity in $v$ and the appearance of the exponential function for $v = 0$.}
    \label{fig:fs}
\end{figure}

\begin{theorem}
    \label{thm:trunc-lr}
    Let $\widetilde{\rho}_{\mu}$ be a mean-parametrized NEF-QVF over $\mu \in \Omega \subseteq \RR$, with variance function $V(\mu) = v_0 + v_1\mu + v_2\mu^2$.
    For $\mu, x \in \Omega$, define the \emph{$z$-score} as
    \begin{equation}
        z_{\mu}(x) \colonequals \frac{x - \mu}{\sqrt{V(\mu)}}.
    \end{equation}
    Let $\mu_{n, i} \in \Omega$ and $\sP_n$ be as in Definition~\ref{def:kin-spiked} of the kin-spiked NEF-QVF model.
    Define the \emph{$z$-score overlap},
    \begin{equation}
        r_n \colonequals \sum_{i = 1}^{N(n)} z_{\mu_{n, i}}(x_i^1)z_{\mu_{n, i}}(x_i^2),
    \end{equation}
    where $\bx^1, \bx^2 \sim \mathcal{P}_n$ independently.
    Let $L_n^{\leq D}$ denote the low-degree likelihood ratio.
    \begin{itemize}
        \item If $v_2 \geq 0$, then for any $n \in \NN$ and $D \in \NN \cup \{+ \infty\}$,
            \begin{equation}
        \label{eq:trunc-lr-pos}
        \|L_n^{\leq D}\|^2 \leq \EE\left[ f^{\leq D}(r_n; v_2)\right],
        \end{equation}
    and equality holds if $v_2 = 0$ (i.e., in the Gaussian and Poisson NEFs).
\item If $v_2 < 0$, then for any $n \in \NN$ and $D \in \NN \cup \{+ \infty\}$,
    \begin{equation}
        \label{eq:trunc-lr-neg}
        \EE\left[ f^{\leq D}(r_n; v_2)\right] \leq \|L_n^{\leq D}\|^2 \leq \EE\left[ f^{\leq D}(r_n; 0)\right].
    \end{equation}
\end{itemize}
\end{theorem}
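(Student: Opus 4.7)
I would expand $L_n^{\leq D}$ in a product orthonormal basis of $L^2(\QQ_n)$ and then compare the resulting squared Fourier coefficients against the Taylor expansion of $f(r_n; v_2)$ term by term. Since $\QQ_n = \bigotimes_i \widetilde{\rho}_{\mu_{n,i}}$, let $\{\tilde p_k^{(i)}\}_{k \geq 0}$ denote the orthonormal polynomial basis of $L^2(\widetilde{\rho}_{\mu_{n,i}})$, so that $\tilde\phi_\alpha(\by) \colonequals \prod_i \tilde p_{\alpha_i}^{(i)}(y_i)$ is an orthonormal basis of $L^2(\QQ_n)$ indexed by multi-indices $\alpha$. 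Writing $L_n(\by) = \EE_{\bx \sim \sP_n}\prod_i (d\widetilde{\rho}_{x_i}/d\widetilde{\rho}_{\mu_{n,i}})(y_i)$ and applying Proposition~\ref{prop:low-deg-opt} with Fubini, the Fourier expansion factorizes across coordinates:
\begin{equation*}
    \|L_n^{\leq D}\|^2 \;=\; \sum_{|\alpha|\leq D} \langle L_n, \tilde\phi_\alpha\rangle_{\QQ_n}^2, \qquad \langle L_n, \tilde\phi_\alpha\rangle_{\QQ_n} = \EE_{\bx \sim \sP_n} \prod_i \EE_{y \sim \widetilde{\rho}_{x_i}}[\tilde p_{\alpha_i}^{(i)}(y)].
\end{equation*}

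\textbf{Key single-coordinate NEF-QVF identity.} The main technical input, which I expect to be the principal obstacle, is the classical fact (essentially in \cite{Morris-1982-NEFQVF}) that for a mean-parametrized NEF-QVF the monic orthogonal polynomials $p_k^{(\mu)}$ of $\widetilde{\rho}_\mu$ satisfy
\begin{equation*}
    \EE_{y \sim \widetilde{\rho}_x}[p_k^{(\mu)}(y)] = B_k(v_2) \cdot (x - \mu)^k, \qquad \|p_k^{(\mu)}\|_{\widetilde{\rho}_\mu}^2 = k!\, V(\mu)^k\, B_k(v_2),
\end{equation*}
where $B_k(v) \colonequals \prod_{j=0}^{k-1}(1+jv)$. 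I would prove this uniformly by induction on $k$ using the three-term recurrence for monic orthogonal polynomials, whose coefficients $\beta_k, \gamma_k$ for an NEF-QVF are explicit rational functions of $\mu$, $V(\mu)$, and $v_2$; an alternative case-by-case proof simply reads the identities off the standard closed-form generating functions of the Hermite, Charlier, Laguerre, Krawtchouk, Meixner, and Meixner--Pollaczek polynomials. Some care is needed in the binomial case $v_2 = -1/m$, where $B_k(v_2) = 0$ for $k > m$ reflects the fact that $p_k^{(\mu)}$ is identically zero beyond the support size. As a direct consequence, the orthonormal polynomials satisfy $\EE_{y \sim \widetilde{\rho}_x}[\tilde p_k^{(\mu)}(y)] = \sqrt{a_k(v_2)}\, z_\mu(x)^k$, where $a_k(v) \colonequals B_k(v)/k!$ is precisely the $k$-th Taylor coefficient of $f(t; v)$ at $t = 0$ (verified by differentiating $(1 - vt)^{-1/v}$).

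\textbf{Combinatorial comparison.} Squaring the Fourier expansion using two independent copies $\bx^1, \bx^2 \sim \sP_n$ and setting $W_\alpha \colonequals \EE_{\bx \sim \sP_n} \prod_i z_{\mu_{n,i}}(x_i)^{\alpha_i}$, I obtain $\|L_n^{\leq D}\|^2 = \sum_{|\alpha|\leq D} \bigl(\prod_i a_{\alpha_i}(v_2)\bigr)\, W_\alpha^2$, while expanding $r_n^k$ by the multinomial theorem and using independence gives $\EE[f^{\leq D}(r_n; v_2)] = \sum_{|\alpha|\leq D} \bigl(a_{|\alpha|}(v_2)\,|\alpha|! / \prod_i \alpha_i!\bigr)\, W_\alpha^2$. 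Because $W_\alpha^2 \geq 0$, all three inequalities in the theorem reduce to pointwise comparisons of $\prod_i B_{\alpha_i}(v_2)$ with $B_{|\alpha|}(v_2)$ and with $B_{|\alpha|}(0) = 1$. Any such comparison yields to a ``merging'' step that replaces two parts $(\alpha_i, \alpha_j)$ by $(\alpha_i + \alpha_j, 0)$, which multiplies $\prod_i B_{\alpha_i}(v_2)$ by $\prod_{k=\alpha_i}^{\alpha_i + \alpha_j - 1}(1 + kv_2) \,/\, \prod_{k=0}^{\alpha_j - 1}(1 + kv_2)$; this ratio is $\geq 1$ when $v_2 \geq 0$ and $\leq 1$ when $v_2 \leq 0$. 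Iterating collapses all parts into a single one, proving $\prod_i B_{\alpha_i}(v_2) \leq B_{|\alpha|}(v_2)$ for $v_2 \geq 0$ (with equality at $v_2 = 0$, yielding \eqref{eq:trunc-lr-pos}) and the reverse for $v_2 < 0$ (the lower bound in \eqref{eq:trunc-lr-neg}); the upper bound in \eqref{eq:trunc-lr-neg} follows from the termwise estimate $B_{\alpha_i}(v_2) \leq 1$ for $v_2 \leq 0$, which gives $\prod_i a_{\alpha_i}(v_2) \leq 1/\prod_i \alpha_i!$, matching the coefficient of $W_\alpha^2$ in $\EE[f^{\leq D}(r_n; 0)]$.
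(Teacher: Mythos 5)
Your proof is correct and follows essentially the same route as the paper: expand $L_n^{\leq D}$ in the product orthonormal basis, invoke Morris's kin-spiked expectation identity (the paper's Corollary~\ref{cor:mismatched-mean}) to express the coefficients via $z$-scores, and then reduce all three bounds to the termwise comparison $\prod_i \widehat{a}_{\alpha_i}(v_2)$ versus $\widehat{a}_{|\alpha|}(v_2)$ (and versus $1$), which is precisely the paper's Proposition~\ref{prop:a-mul} together with Proposition~\ref{prop:a-mon}. Your ``merging'' argument for the multiplicativity inequality, and your presentation of the comparison directly at the level of coefficients of $W_\alpha^2$ rather than by re-grouping the sum with the multinomial theorem, are only cosmetic variations on the paper's proof.
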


We also give more cumbersome exact formulae in Section~\ref{sec:low-deg-nefqvf}, but  emphasize these bounds here for their similarity to the simpler Gaussian case.
In particular, since $f(t; v) \approx \exp(t)$ near $t = 0$ for any $v$, to a first approximation it appears reasonable to estimate $\|L_n^{\leq D}\|^2 \approx \EE[\exp^{\leq D}(r_n)]$.
We suggest this as a powerful heuristic, far simpler than the full low-degree likelihood ratio analysis, for making quick predictions of computational thresholds.
As an example, we use this to informally derive the Kesten-Stigum threshold in the symmetric stochastic block model with two communities in Section~\ref{sec:kesten-stigum}, in merely a few lines of calculation requiring no graph-theoretic intuition.

\subsection{Channel monotonicity}
The monotonicity of the functions $f(t; v)$ in $v$ evident in Figure~\ref{fig:fs} suggests that we might expect $\|L_n^{\leq D}\|$ to be monotone across different kin-spiked NEF-QVF models with the same mean distribution $\sP_n$.
While this does not follow directly from the above result, a slightly more careful argument shows that it is indeed the case.
\begin{theorem}
    \label{thm:comparison}
    Suppose $L_n^{(i)}$ for $i \in \{1, 2\}$ are the likelihood ratios for the hypothesis testing problems in two kin-spiked NEF-QVF models, with mean domains $\Omega^{(i)}$ and variance functions $V^{(i)}(\mu) = v_0^{(i)} + v_1^{(i)}\mu + v_2^{(i)}\mu^2$.
    Suppose that the null means $\mu_{n, j}$ and the distribution $\mathcal{P}_n$ are the same in both problems (in particular, $\Omega^{(1)} \cap \Omega^{(2)}$ must contain the support of $\mathcal{P}_n$).
    If $v_2^{(1)} \leq v_2^{(2)}$, then, for any $D \in \NN \cup \{+ \infty\}$, $\|(L_n^{(1)})^{\leq D}\|^2 \leq \|(L_n^{(2)})^{\leq D}\|^2$.
\end{theorem}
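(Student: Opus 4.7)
The plan is to build on the exact formula for $\|L_n^{\leq D}\|^2$ that the proof of Theorem~\ref{thm:trunc-lr} must implicitly produce via an orthogonal polynomial expansion, and then squeeze monotonicity in $v_2$ out of the sequence $c_k(v) := \prod_{j=0}^{k-1}(1+jv)$ (whose ratios $c_k(v)/k!$ are the Taylor coefficients of $f(t;v)$). First I would expand $L_n^{\leq D}$ in the product orthonormal basis $P_\alpha(\by) := \prod_j p_{\alpha_j}^{\mu_{n,j}}(y_j)$, where $p_k^\mu$ is the $k$-th orthonormal polynomial of $\widetilde{\rho}_\mu$, and compute $\langle L_n, P_\alpha\rangle = \EE_{\bx \sim \sP_n}\prod_j \EE_{y \sim \widetilde{\rho}_{x_j}}[p_{\alpha_j}^{\mu_{n,j}}(y)]$ by conditioning on $\bx$ and using the product structure of both $\QQ_n$ and the conditional law of $\by$ under $\PP_n$.

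The key ingredient is a unified ``Morris identity'' for orthonormal polynomials of any NEF-QVF with $V(\mu) = v_0 + v_1\mu + v_2\mu^2$:
\begin{equation*}
\EE_{y \sim \widetilde{\rho}_x}\bigl[p_k^\mu(y)\bigr] = \epsilon_k \sqrt{c_k(v_2)/k!}\, z_\mu(x)^k,
\end{equation*}
with $\epsilon_k \in \{\pm 1\}$ a family-dependent sign. This generalizes the classical evaluations of Hermite, Charlier, Laguerre, Krawtchouk, Meixner, and Meixner--Pollaczek polynomials under a shifted parameter and can be obtained uniformly from the generating function for orthonormal polynomials in the Morris class. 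After substituting and squaring (which kills the signs), one gets the Parseval identity
\begin{equation*}
\|L_n^{\leq D}\|^2 = \sum_{|\alpha| \leq D} \left(\prod_j \frac{c_{\alpha_j}(v_2)}{\alpha_j!}\right) \left(\EE_{\bx \sim \sP_n} \prod_j z_{\mu_{n,j}}(x_j)^{\alpha_j}\right)^2.
\end{equation*}

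With this formula in hand, the comparison is then an attempt to dominate the sum for model~1 by the sum for model~2 term-by-term. The scalar factor $\prod_j c_{\alpha_j}(v_2)/\alpha_j!$ is manifestly non-decreasing in $v_2$ on $\sV$: for $v_2 \geq 0$ each $(1 + jv_2)$ is positive and increasing, and on the discrete branch $v_2 = -1/m$ the constraint $k \leq m$ keeps all factors non-negative and the monotonicity still goes through by direct inspection. I expect the main obstacle to lie in the moment factor $(\EE_\bx \prod_j z_{\mu_{n,j}}(x_j)^{\alpha_j})^2$: the $z$-scores depend on $V^{(i)}$ through $\sqrt{V^{(i)}(\mu_{n,j})}$, so the two models contribute genuinely different numerical factors, and one cannot compare term-wise in this form.

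The resolution, which is the ``slightly more careful argument'' the author alludes to, should be to rewrite $z_{\mu}(x) = (x-\mu)/\sqrt{V(\mu)}$ and move the $V^{(i)}(\mu_{n,j})^{\alpha_j}$ factors back into the prefactor, leaving a genuinely model-independent raw moment $\EE_{\bx}\prod_j (x_j - \mu_{n,j})^{\alpha_j}$. One is then reduced to showing that the combined prefactor $\prod_j c_{\alpha_j}(v_2^{(i)}) / (\alpha_j!\, V^{(i)}(\mu_{n,j})^{\alpha_j})$ is monotone in $i$; here I anticipate one must exploit a structural property tying the Morris sequence $c_k(v_2)$ and the full variance function $V$ together (for instance, a differential recursion for $p_k^\mu$ inherited from the NEF-QVF structure), rather than appealing only to the algebraic monotonicity of $c_k(\cdot)$. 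This coupling between $v_2$ and $V$ is what I expect to be the main technical subtlety.
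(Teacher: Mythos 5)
Your skeleton is the paper's proof: expand $L_n^{\leq D}$ in the product orthonormal basis, use the Morris expectation identity (the paper's Corollary~\ref{cor:mismatched-mean}) to get the Parseval formula
\[
\|L_n^{\leq D}\|^2 = \sum_{|\bm k| \leq D} \frac{\prod_j \what{a}_{k_j}(v_2)}{\prod_j k_j!}\left(\Ex_{\bx \sim \sP_n} \prod_j z_{\mu_{n,j}}(x_j)^{k_j}\right)^2,
\]
and then invoke the monotonicity of $\what{a}_k(v)$ in $v$ (Proposition~\ref{prop:a-mon}). That is exactly what the paper does, and that part of your proposal is right.

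Where you and the paper diverge is on the $z$-score factor. You correctly observe that $z_{\mu_{n,j}}(x_j) = (x_j - \mu_{n,j})/\sqrt{V^{(i)}(\mu_{n,j})}$ formally depends on the full variance function of model $i$, not just on $v_2^{(i)}$, and you then set off looking for a structural identity tying $\what{a}_k(v_2)$ and $V$ together so as to restore term-by-term monotonicity of the prefactor $\prod_j \what{a}_{k_j}(v_2^{(i)})/\big(k_j!\, V^{(i)}(\mu_{n,j})^{k_j}\big)$. No such identity exists, and this route is a dead end: $V^{(i)}(\mu_{n,j})$ can be made arbitrary independently of $v_2^{(i)}$ (even within the Gaussian family, where $v_2 = 0$ for every $\sigma^2$), so the prefactor is not controlled by $v_2$ alone. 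The paper's proof instead simply asserts that the $z$-score factor does not depend on $i$; for that assertion to be correct one must additionally normalize the two families so that $V^{(1)}(\mu_{n,j}) = V^{(2)}(\mu_{n,j})$ for every $j$, which is the natural reading of ``same problem, different channel'' but is not stated explicitly in the theorem. Once that normalization is in place, your Parseval formula plus Proposition~\ref{prop:a-mon} close the proof immediately with no remaining subtlety. So: your concern is well founded and points to an unstated hypothesis, but your proposed resolution (a coupling between the Morris constants and the full $V$) does not work and is not the intended argument; the intended argument is the direct one you began with, applied after matching the null variances across the two models.
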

\noindent
Informally, this says that if $v_2^{(1)} \leq v_2^{(2)}$, then ``Problem 1 is at least as hard as Problem 2,'' for any given computational budget.
For example, for a fixed collection of null means $\mu_{n, i}$ and a fixed spike mean distribution $\sP_n$, we would predict the following relationships among output ``channels'' or observation distributions, with ``$\geq$'' denoting greater computational difficulty:
\begin{equation}
    \text{Bernoulli} \geq \text{Binomial} \geq \text{Gaussian} = \text{Poisson} \geq \text{Exponential}.
\end{equation}

This suggests two intriguing open problems further probing the low degree method.
First, are these predictions in fact accurate, i.e., can they be corroborated with any other form of evidence of computational hardness?
(One intriguing possibility is average-case reductions in the style of \cite{BR-2013-SparsePCA, BB-2020-ReducibilityStatCompGaps} between different NEF-QVFs.)
And second, if these predictions are accurate, then does \emph{strict} inequality hold in computational cost between any of these versions of a given problem, or does \emph{channel universality} hold (we borrow the term from \cite{LKZ-2015-LowRankChannelUniversality} but use it in a slightly different sense), where in fact computational complexity of testing does not depend on the NEF-QVF through which the data are observed?

\subsection{Hyperbolic secant spiked matrix model}
We now turn our attention to additively-spiked NEF-QVF models, and isolate one particular model where we can both perform explicit calculations and draw a comparison to prior works.
This is a \emph{spiked matrix model}---asking us to determine whether an unstructured random matrix has been deformed by a rank-one perturbation---with noise distributed according to $\rho^{\sech}$ the probability measure on $\RR$ which has the following density $w(x)$ with respect to Lebesgue measure:
\begin{equation}
    w(x) \colonequals \frac{1}{2\cosh(\pi x / 2)} = \frac{1}{2}\sech(\pi x / 2).
\end{equation}
This density belongs to the rather obscure class of ``generalized hyperbolic secant'' NEFs mentioned in Table~\ref{tab:nef-qvf}.
It may be viewed as a smoothing of the Laplace distribution; see Figure~\ref{fig:sech}.\footnote{This density has some other remarkable mathematical properties: (1) like the Gaussian density, up to dilation $w(x)$ is its own Fourier transform, and (2) $w(x)$ is the Poisson kernel over the strip $\{z: \Im(z) \in [-1, 1]\} \subset \CC$.}

\begin{figure}
    \begin{center}
        \includegraphics[scale=0.75]{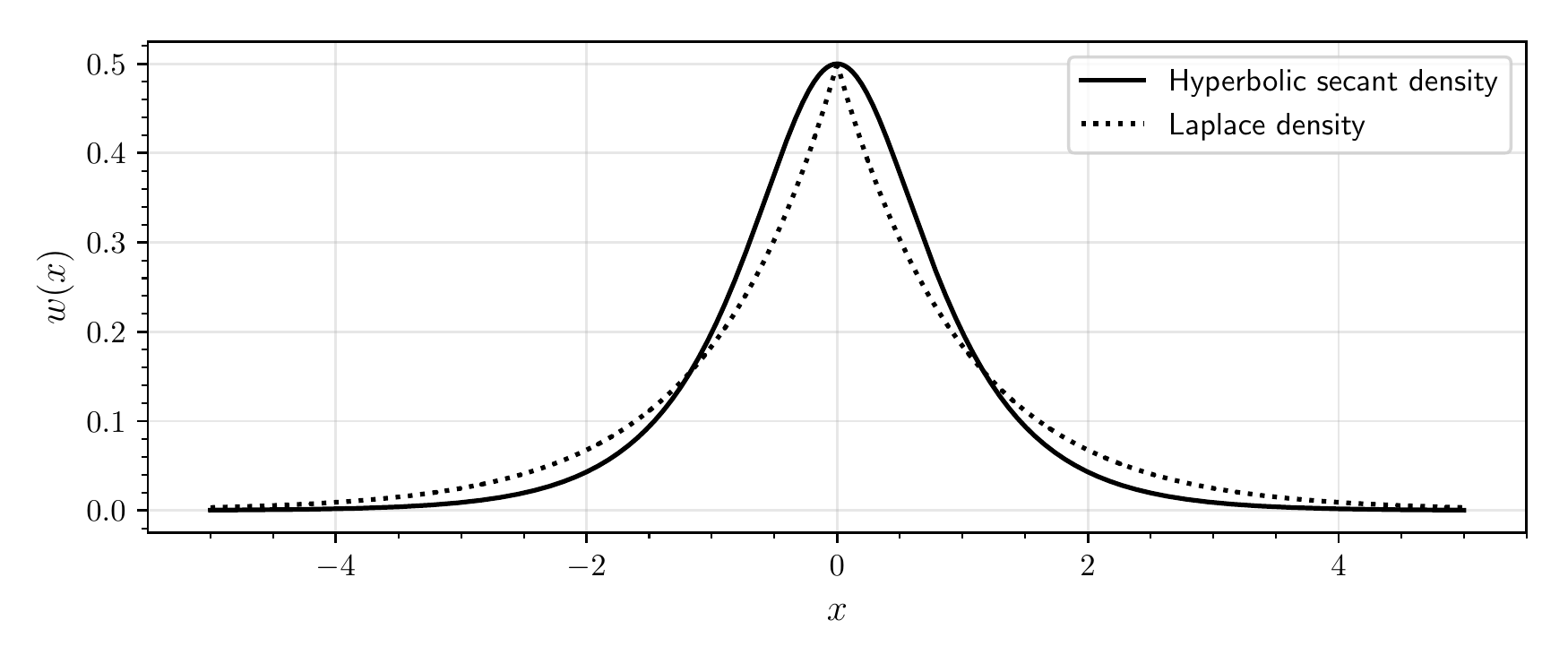}
    \end{center}
    \vspace{-2em}
    \caption{We plot the density $w(x)$ of the hyperbolic secant distribution used in Theorem~\ref{thm:spiked-mx}, showing that it is effectively a smoothed version of the density $\frac{1}{2}\exp(-|x|)$ of the better-known Laplace distribution.}
    \label{fig:sech}
\end{figure}

We next specify the spiked matrix model we will study.
\begin{definition}[Rademacher-spiked Wigner matrix models]
    Given a probability measure $\rho$ over $\RR$, we write $\Wig(\rho, \lambda) = ((\QQ_n, \PP_n))_{n = 1}^{\infty}$ for the sequence of pairs of probability measures $\QQ_n$ and $\PP_n$ over $\RR^{\binom{[n]}{2}}$ defined as follows:
    \begin{itemize}
    \item Under $\bY \sim \QQ_n$, we draw $Y_{\{i, j\}} \sim \rho$ independently for all $i < j$.
    \item Under $\bY \sim \PP_n$, we first draw $\bx \sim \Unif(\{\pm 1\}^n)$ and $Z_{\{i, j\}} \sim \rho$ independently for all $i < j$, and then set $Y_{\{i, j\}} = \frac{\lambda}{\sqrt{n}} x_ix_j + Z_{\{i, j\}}$.
    \end{itemize}
\end{definition}
\noindent
Thus $\Wig(\rho^{\sech}, \lambda)$ is an additively-spiked NEF-QVF model, as described in Definition~\ref{def:additively-spiked}.
We may also view $\bY = \frac{\lambda}{\sqrt{n}} \bx\bx^{\top} + \bZ$ as symmetric matrices, where we omit the diagonal from the observations (this is a technical convenience that is straightforward but tedious to eliminate).

The following result characterizes two testing algorithms related to computing the largest eigenvalue.
For $\bY$ itself, for sufficiently large $\lambda$, the largest eigenvalue undergoes a ``pushout'' effect under $\PP_n$ and becomes larger than the typical largest eigenvalue under $\QQ_n$, as characterized by a variant of the Baik--Ben Arous--P\'{e}ch\'{e} (BBP) transition of random matrix theory \cite{BBAP-2005-LargestEigenvalueSampleCovariance,CDMF-2009-DeformedWigner}.
It turns out, however, that it is suboptimal to merely compute and threshold the largest eigenvalue of $\bY$; instead, the optimal algorithm is to first apply an entrywise transformation and only then compute and threshold the largest eigenvalue.
\begin{proposition}[Better-than-BBP testing \cite{CDMF-2009-DeformedWigner,PWBM-2018-PCAI}]
    \label{prop:better-than-bbp}
    Define $\lambda_* \colonequals 2\sqrt{2} / \pi \approx 0.9$.
    \begin{itemize}
    \item If $\lambda > 1$, then strong detection in $\Wig(\rho^{\sech}, \lambda)$ can be achieved in polynomial time by the \emph{PCA test},
        \begin{equation}
            f^{\mathsf{PCA}}(\bY) \colonequals \left\{\begin{array}{ll} \texttt{p} & \text{if } \frac{1}{\sqrt{n}}\lambda_{\max}(\bY) \geq \frac{1}{2}(2 + \lambda + \lambda^{-1}), \\ \texttt{q} & \text{otherwise.}\end{array}\right.
        \end{equation}
    \item If $\lambda < 1$, then $f^{\mathsf{PCA}}$ does not achieve strong detection in $\Wig(\rho^{\sech}, \lambda)$.
    \item If $\lambda > \lambda_*$, then strong detection in $\Wig(\rho^{\sech}, \lambda)$ can be achieved in polynomial time by the \emph{pre-transformed PCA test}:
        \begin{equation}
            f^{\mathsf{tPCA}}(\bY) \colonequals \left\{\begin{array}{ll} \texttt{p} & \text{if } \frac{1}{\sqrt{n}}\lambda_{\max}\left(\frac{\pi}{2}\tanh\left(\frac{\pi}{2}\bY\right)\right) \geq \frac{1}{2}(2\lambda_* + \lambda_*^2\cdot \lambda + \lambda^{-1}), \\ \texttt{q} & \text{otherwise.} \end{array}\right.
        \end{equation}
        Here, $\tanh(\cdot)$ is applied entrywise to the matrix argument.
    \item If $\lambda < \lambda_*$, then there exists no test (efficiently computable or not) that can achieve strong detection in $\Wig(\rho^{\sech}, \lambda)$.
    \end{itemize}
\end{proposition}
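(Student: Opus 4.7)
The four bullets of Proposition~\ref{prop:better-than-bbp} split into two natural pairs: the first two amount to the Baik--Ben Arous--P\'ech\'e (BBP) transition for Wigner matrices with unit-variance non-Gaussian noise, while the last two are organized around the Fisher information identity $I(\rho^{\sech}) = \pi^2/8 = \lambda_*^{-2}$ together with an entrywise linearization by the score function. For bullets one and two, I would invoke the Capitaine--Donati-Martin--F\'eral extension of BBP to deformed Wigner matrices~\cite{CDMF-2009-DeformedWigner}. The density $w$ is symmetric with mean $0$, variance $1$, and exponential tails, so $\bY/\sqrt{n}$ satisfies the hypotheses of that theorem. Under $\QQ_n$, the semicircle law gives $\lambda_{\max}(\bY)/\sqrt{n} \to 2$, while under $\PP_n$ the rank-one perturbation produces an outlier eigenvalue at $\lambda + \lambda^{-1}$ exactly when $\lambda > 1$, and no outlier otherwise. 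The threshold $\tfrac{1}{2}(2 + \lambda + \lambda^{-1})$ lies strictly between these two limits precisely in that regime, giving strong detection for $\lambda > 1$ and its failure for $f^{\mathsf{PCA}}$ when $\lambda < 1$.

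For the third bullet I would Taylor-expand the entrywise score transformation $s(y) \colonequals \frac{\pi}{2}\tanh(\frac{\pi}{2} y) = -(\log w)^\prime(y)$ about each $Z_{ij}$, writing
\begin{equation}
  s(Y_{ij}) = s(Z_{ij}) + s^\prime(Z_{ij}) \cdot \tfrac{\lambda}{\sqrt{n}} x_i x_j + R_{ij},
\end{equation}
where the remainder satisfies $R_{ij} = O(\lambda^2/n)$ uniformly since $s^{\prime\prime}$ is bounded. Using $\EE_{Z \sim \rho^{\sech}}[s(Z)] = 0$, $\EE[s(Z)^2] = I$, and the integration-by-parts identity $\EE[s^\prime(Z)] = I$, the transformed matrix decomposes as $s(\bY) = s(\bZ) + \tfrac{\lambda I}{\sqrt{n}} \bx\bx^\top + (\text{error})$, where $s(\bZ)$ is Wigner with entry-variance $I$ and the error term is a centered Wigner-type matrix of operator norm $O(1)$, hence negligible at scale $\sqrt{n}$. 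A second application of BBP to the principal part produces a rank-one outlier exactly when the effective signal $\lambda I$ exceeds the effective noise scale $\sqrt{I}$, i.e.\ when $\lambda > 1/\sqrt{I} = \lambda_*$, and yields a strict asymptotic gap between the largest eigenvalue of $s(\bY)/\sqrt{n}$ under $\PP_n$ and under $\QQ_n$ that any intermediate threshold can exploit for strong detection.

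The fourth bullet is an information-theoretic lower bound ruling out all tests for $\lambda < \lambda_*$. I would follow the second-moment / contiguity program of Perry--Wein--Bandeira--Moitra~\cite{PWBM-2018-PCAI}, writing
\begin{equation}
  \|L_n\|^2 = \Ex_{\bx^1, \bx^2 \sim \Unif(\{\pm 1\}^n)} \prod_{i < j} \Ex_{Z \sim \rho^{\sech}}\!\left[\frac{w(Z - \tfrac{\lambda}{\sqrt{n}}x_i^1 x_j^1)\, w(Z - \tfrac{\lambda}{\sqrt{n}}x_i^2 x_j^2)}{w(Z)^2}\right]
\end{equation}
and Taylor-expanding the inner integrand in $\lambda/\sqrt{n}$. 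The first moment of the score vanishes, and the leading quadratic term equals $\tfrac{1}{2} I \cdot \tfrac{\lambda^2}{n} x_i^1 x_j^1 x_i^2 x_j^2$; exponentiating the product and applying a central limit theorem to the Rademacher overlap $n^{-1/2}\langle \bx^1, \bx^2\rangle$ yields $\|L_n\|^2 \to \Ex_{G \sim \mathcal{N}(0,1)} \exp\!\bigl(\tfrac{1}{2} \lambda^2 I \cdot G^2\bigr)$, finite iff $\lambda^2 I < 1$, i.e.\ iff $\lambda < \lambda_*$. Le Cam's contiguity criterion then precludes strong detection. The main obstacle is making this expansion rigorous: one must control the higher-order Taylor remainder in $\lambda/\sqrt{n}$ uniformly in $(\bx^1, \bx^2)$ and show it contributes only subleading terms to the product, which requires quantitative pointwise bounds on the score and its derivatives exploiting the exponential decay of $w$, a task carried out in~\cite{PWBM-2018-PCAI}.
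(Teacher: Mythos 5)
The paper does not supply its own proof of Proposition~\ref{prop:better-than-bbp}: it is stated as a cited result drawn from \cite{CDMF-2009-DeformedWigner} and \cite{PWBM-2018-PCAI}, so the right comparison is against those works, and your sketch faithfully reconstructs their argument. The deformed-Wigner BBP transition handles the first two bullets, the score-function linearization $s(Y_{ij}) \approx s(Z_{ij}) + s'(Z_{ij})\tfrac{\lambda}{\sqrt{n}}x_i x_j$ followed by a second application of BBP to $s(\bZ)$ (entry variance $I$) handles the third, and the second-moment/contiguity computation culminating in the $\chi^2$ moment generating function handles the fourth. The identities you use are all correct: $I(\rho^{\sech}) = \pi^2/8 = \lambda_*^{-2}$, $\EE[s(Z)] = 0$, $\EE[s'(Z)] = \EE[s(Z)^2] = I$, boundedness of $s''$ giving an $O(\lambda^2/n)$ entrywise remainder, and the $O(1)$ operator-norm control on both the fluctuation matrix $(s'(\bZ) - I) \circ \tfrac{\lambda}{\sqrt{n}}\bx\bx^\top$ and the remainder matrix, both of which vanish after dividing by $\sqrt{n}$. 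You correctly defer the quantitative control of higher-order Taylor terms in the second-moment calculation to \cite{PWBM-2018-PCAI}, which is where that work is genuinely done.

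One point worth flagging, which your careful bookkeeping exposes: under $\QQ_n$ the transformed matrix $s(\bY) = s(\bZ)$ has entry variance $I = \lambda_*^{-2}$, so $\tfrac{1}{\sqrt{n}}\lambda_{\max}(s(\bY)) \to 2\sqrt{I} = 2/\lambda_*$, and under $\PP_n$ the outlier sits at $\lambda I + 1/\lambda = \lambda/\lambda_*^2 + \lambda^{-1}$. The natural midpoint cutoff is therefore $\tfrac{1}{2}(2/\lambda_* + \lambda/\lambda_*^2 + \lambda^{-1})$, whereas the proposition writes $\tfrac{1}{2}(2\lambda_* + \lambda_*^2 \lambda + \lambda^{-1})$. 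A sanity check at $\lambda = \lambda_*$, where the outlier should coalesce with the bulk edge at $2/\lambda_*$, confirms your expression and not the stated one, so the proposition's cutoff appears to carry a $\lambda_* \leftrightarrow \lambda_*^{-1}$ slip. Since the proof only requires some fixed threshold strictly between the two asymptotic limits, this does not affect the truth of the statement, but your derivation recovers the correct constant.
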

\noindent
The threshold $\lambda_*$ is related to the \emph{Fisher information} in the family of translates of $\rho^{\sech}$ as $\lambda_* = (\int_{-\infty}^{\infty}w^{\prime}(x)^2 / w(x) dx)^{-1/2}$, and the optimal entrywise transformation is the logarithmic derivative $\frac{\pi}{2}\tanh(\frac{\pi}{2}x) = -w^{\prime}(x) / w(x)$; the results of \cite{PWBM-2018-PCAI} show that both relationships hold for optimal tests in non-Gaussian spiked matrix models in great generality.

While low-degree polynomials can approximate the test $f^{\mathsf{PCA}}$ via the power method, the transcendental entrywise $\tanh(\cdot)$ transformation used by $f^{\mathsf{tPCA}}$ seems rather ill-suited to the low-degree analysis.
We show below that, indeed, if we attempt to carry out the low-degree method for this problem while bounding the \emph{entrywise degree} of the polynomials involved---the greatest power with which any given entry of $\bY$ can appear---then we obtain an incorrect threshold.
Loosely speaking, this suggests that some analytic computation like the transcendental $\tanh(\cdot)$ operation is in fact \emph{necessary} to obtain an optimal test.

\begin{definition}[Entrywise degree]
    For a polynomial $p \in \RR[y_1, \dots, y_N]$, write $\deg_i(p)$ for the greatest power with which $y_i$ occurs in a monomial having non-zero coefficient in $p$.
\end{definition}

\begin{theorem}
    \label{thm:spiked-mx}
    Suppose $D \in \NN$ and $0 < \lambda < \lambda_* + \frac{1}{20D}$.
    Then,
    \begin{equation}
        \limsup_{n \to \infty} \left\{\begin{array}{ll} \text{maximize} & \EE_{\bY \sim \PP_n} f_n(\bY) \\[0.5em] \text{subject to} & f_n \in \RR[\bY], \\[0.5em] & \deg_{\{i, j\}}(f_n) \leq D \text{ for all } \{i, j\} \in \binom{[n]}{2}, \\[0.5em] & \EE_{\bY \sim \QQ_n} f_n(\bY)^2 = 1 \end{array}\right\} < +\infty.
    \end{equation}
\end{theorem}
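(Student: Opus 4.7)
The plan is to perform the Parseval analysis in the tensor-product orthonormal polynomial basis of $V_D$, exploit a parity symmetry that kills the troublesome cross terms, reduce the resulting squared norm to the partition function of an Eulerian-subgraph model on $K_n$, and read off the threshold as the inverse square root of a \emph{truncated Fisher information}. The main obstacle will be a quantitative lower bound on the gap between the truncated and true Fisher informations.

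Let $\{p_k\}_{k \geq 0}$ denote the orthonormal polynomials of $\rho^{\sech}$ in $L^2(\rho^{\sech})$. Since $V_D$ admits the orthonormal basis $\{\prod_e p_{\alpha_e}(Y_e) : \alpha \in \{0,\ldots,D\}^E\}$ where $E = \binom{[n]}{2}$, an analog of Proposition~\ref{prop:low-deg-opt} shows the squared supremum equals $\|\Pi_{V_D} L_n\|^2 = \sum_\alpha \hat L_\alpha^2$ with $\hat L_\alpha := \EE_{\PP_n}\!\prod_e p_{\alpha_e}(Y_e)$. The Mehler-like identity $w(Y-t)/w(Y) = \sum_k \phi_k(t) p_k(Y)$ with $\phi_k(t) := \EE_{Z \sim w}[p_k(Z+t)]$ then gives $\hat L_\alpha = \EE_\bx \prod_e \phi_{\alpha_e}(\lambda x_i x_j/\sqrt n)$. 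Because $w$ is even, $\phi_k$ inherits the parity of $k$; together with $x_i x_j \in \{\pm 1\}$, this forces $\phi_k(\lambda s/\sqrt n)$ to equal $\phi_k(\lambda/\sqrt n)$ for even $k$ and $s\, \phi_k(\lambda/\sqrt n)$ for odd $k$. Squaring and summing over $\alpha$ therefore factors cleanly:
\[
\|\Pi_{V_D} L_n\|^2 = \EE_{\bx^1, \bx^2} \prod_e \bigl(a + b \cdot x_i^1 x_j^1 x_i^2 x_j^2\bigr) = a^{|E|} \sum_{S \in \mathcal{E}(K_n)} (b/a)^{|S|},
\]
where $a := \sum_{k \leq D,\ k \text{ even}} \phi_k(\lambda/\sqrt n)^2$, $b := \sum_{k \leq D,\ k \text{ odd}} \phi_k(\lambda/\sqrt n)^2$, and $\mathcal{E}(K_n)$ denotes the Eulerian subgraphs (all vertex degrees even) of $K_n$; the second equality uses that $y_i := x_i^1 x_i^2$ is uniform on $\{\pm 1\}^n$.

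A Taylor expansion in $t = \lambda/\sqrt n$ yields $a = 1 + O(n^{-2})$ and $b = F_D \lambda^2/n + O(n^{-2})$, with
\[
F_D := \sum_{k \leq D,\ k \text{ odd}} c_{k,1}^2, \qquad c_{k,1} := \phi_k'(0) = \EE[p_k'(Z)],
\]
which equals the squared $L^2(\rho^{\sech})$-norm of the degree-$\leq D$ polynomial projection of the score $-w'/w = (\pi/2)\tanh(\pi x/2)$; in particular $F := \lim_{D \to \infty} F_D = \pi^2/8 = 1/\lambda_*^2$ is the ordinary Fisher information. The classical identity $\sum_{S \in \mathcal{E}(K_n)} y^{|S|} = 2^{-n}\sum_{\sigma \in \{\pm 1\}^n}(1+y)^{\binom{n}{2}} e^{-2k(\sigma)(n - k(\sigma))\mathrm{arctanh}(y)}$, combined with a Gaussian approximation to $k(\sigma) := |\{i : \sigma_i = +1\}|$, shows that when $y = b/a$ this partition function converges as $n \to \infty$ to a finite limit iff $F_D \lambda^2 < 1$; since also $a^{|E|} \to e^{O(\lambda^4)}$, the limsup in the theorem is finite whenever $\lambda < \lambda_{*,D} := 1/\sqrt{F_D}$.

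Thus the remaining task --- and the main obstacle --- is the quantitative gap $\lambda_{*,D} \geq \lambda_* + 1/(20D)$, equivalently $F - F_D \geq F^{3/2}/(10 D) - O(1/D^2)$. This is a lower bound on the tail $\sum_{k > D,\ k \text{ odd}} c_{k,1}^2$ of the orthonormal expansion of $\tanh(\pi x/2)$ in $L^2(\rho^{\sech})$. Because $\tanh(\pi x/2)$ has poles at $x = \pm i$, this expansion cannot decay exponentially; a singularity analysis of the generating function for the Meixner--Pollaczek polynomials attached to $\rho^{\sech}$ (closed form $\sum_n P_n(x) z^n = (1-iz)^{-1/2 + ix}(1+iz)^{-1/2 - ix}$, from which $\sum_n c_{n,1}^{\mathrm{un}} z^n = 2\arctan(z)\sqrt{1+z^2}/(1-z^2)$) gives the sharp decay $c_{k,1}^2 \asymp 1/k^2$ with a computable constant, yielding the $\Omega(1/D)$ tail and hence the stated $1/(20D)$ after propagating through the algebra above.
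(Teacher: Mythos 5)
Your proposal follows essentially the same route as the paper: expand the truncated likelihood ratio in the tensor-product orthonormal basis $\widehat P_{\bk}$ (the paper's Lemma~\ref{lem:Ln-components-additive} gives exactly your $\hat L_\alpha = \EE_\bx \prod_e \widehat\tau_{\alpha_e}(\lambda x_ix_j/\sqrt n)$), use the parity of the measure together with $x_ix_j \in \{\pm1\}$ to factor the $\lambda$-dependence out (the paper's ``Step~1''), pass to the replica expectation over $\bx^1, \bx^2$, and reduce to a $\chi^2$-type moment generating function in $\langle\bx^1,\bx^2\rangle/\sqrt n$ (the paper's ``Step~3''). Your Eulerian-subgraph / Ising-transfer reformulation is a somewhat more explicit packaging of the same replica computation, and your identification of the cutoff as $\lambda_{*,D} = F_D^{-1/2}$ with $F_D$ the truncated Fisher information is a sharper statement of what the paper bounds slightly more crudely via Corollary~\ref{cor:tau-value-bound} and the estimate $\sum_{\ell \geq D/2}(2\ell+1)^{-2}\geq 1/(3D)$.

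Two caveats. First, the step you flag as ``the main obstacle'' is in fact the trivial part: from the generating function $\sum_k \widehat\tau_k(y;0)\,t^k = \exp(y\arctan t)$, the linear coefficient is exactly $c_{k,1} = [t^k]\arctan t = (-1)^{(k-1)/2}/k$ for odd $k$ (and $0$ for even $k$), so the Fisher-information tail $F - F_D = \sum_{k>D, k\text{ odd}} 1/k^2$ is a classical sum and the $\Omega(1/D)$ lower bound is an immediate integral comparison; no singularity analysis of a Meixner--Pollaczek generating function is needed, and the incidental formula you quote for $\sum_n c^{\mathrm{un}}_{n,1} z^n$ does not look right (with the paper's normalization the un-normalized coefficients grow like $(k-1)!$, so that series has radius zero). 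Second, the final convergence claim requires the same care as the paper's citation to \cite{PWBM-2018-PCAI} — the ``Gaussian approximation to $k(\sigma)$'' must control the multiplicative $a^{|E|} = 1 + O(1/n^2)$ correction and the moderate deviations of $\langle\bx^1,\bx^2\rangle$, which is handled by the standard bound $\prod_e(1+c\,y_iy_j) \leq \exp(c\sum_e y_iy_j)$ followed by the second-moment-method lemma — but this is a routine fix. Modulo those points the argument is correct and equivalent to the paper's.
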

\noindent
That is, when we restrict our attention to polynomials of entrywise degree at most $D$ a constant not growing with $n$, the apparent computational threshold suggested by the corresponding low-degree calculation shifts by $\Omega(1 / D)$ from the true value.

This limitation applies, for example, to the approach of the recent paper \cite{DHS-2020-SpikedMatrixHeavyTailed}.
The authors propose to build tests and estimators for spiked matrix models that remain effective under heavy-tailed noise distributions by using polynomials that sum over monomials indexed by \emph{self-avoiding walks} on the matrix $\bY$.
In particular, they show that, for $\lambda > 1$---the optimal threshold for Gaussian noise---such polynomials can successfully achieve strong detection in $\Wig(\rho, \lambda)$ for a wide variety of measures $\rho$, ranging from Gaussian $\rho$ to very heavy-tailed $\rho$ for which $f^{\mathsf{PCA}}$ fails severely.
However, our result implies that, since these polynomials have entrywise degree 1 (that is, they are multilinear), such polynomials (and many generalizations thereof to higher but bounded entrywise degree) \emph{cannot} achieve strong detection for all $\lambda > \lambda_*$, and thus are suboptimal for this model.

\subsection{Exploration before inference: a challenge for low-degree polynomials}

The discussion above suggests that, for algorithms computing low-degree polynomials, there is a tension between robustness to heavy-tailed noise distributions and optimality for specific rapidly-decaying (and, in the case above, non-Gaussian) noise distributions.
We propose the following hypothesis testing problem to capture a simple case of this challenge.

\begin{definition}[Mixed spiked matrix model]
    Fix $\alpha > 1$ and $\lambda > 0$.
    Let $\rho^{\heavy}$ be the probability measure over $\RR$ with density proportional to $(1 + x^2)^{-\alpha / 2}$.
    Denote $\Wig(\rho^{\sech}, \lambda) \equalscolon ((\QQ^{(1)}_n, \PP^{(1)}_n))_{n = 1}^{\infty}$ and $\Wig(\rho^{\heavy}, \lambda) \equalscolon ((\QQ^{(2)}_n, \PP^{(2)}_n))_{n = 1}^{\infty}$.
    Let $\Mix(\alpha, \lambda) = ((\QQ_n, \PP_n))_{n = 1}^{\infty}$ for $\QQ_n$ and $\PP_n$ defined as follows:
    \begin{itemize}
    \item Under $\QQ_n$, draw $i \sim \Unif(\{1, 2\})$, and observe $\bY \sim \QQ_n^{(i)}$.
    \item Under $\PP_n$, observe $\bY \sim \PP_n^{(1)}$.
    \end{itemize}
\end{definition}
\noindent
In words, in the null distribution we flip a coin to choose between a heavy-tailed and a rapidly-decaying but non-Gaussian entrywise distribution for the noise matrix, while in the planted distribution we always observe the latter kind of noise.

The motivation for this definition should be clear at a technical level in the context of the low-degree likelihood ratio analysis: the possibility of receiving heavy-tailed inputs in effect restricts the available polynomials to low entrywise degree, which in turn precludes optimal testing in $\Wig(\rho^{\sech}, \lambda)$.
Still, we suggest that this example is not as artificial as it might appear; on the contrary, it captures an important reality of statistical practice.
When faced with a dataset, the statistician knows to first examine the data, and in particular assess what distributional assumptions could be justified, before applying a particular algorithm.
We might, for instance, think of the case $i = 2$ in $\QQ_n$ as observing the results of a severely miscalibrated experiment, in which case we should not use an inference procedure fine-tuned to our distributional expectations.
As this initial examination is often said to fall under the rubric of ``exploratory data analysis,'' we call this algorithmic strategy \emph{exploration before inference}.

It appears difficult for low-degree polynomials to match the performance of algorithms performing exploration before inference.
In particular, we have the following result.
\begin{theorem}
    \label{thm:mixed-spiked-mx}
    For all $\alpha > 1$ and $\lambda > \lambda_*$, there is a polynomial-time algorithm achieving strong detection in $\Mix(\alpha, \lambda)$.
    However, for all $0 < \lambda < \lambda_* + \frac{1}{10\alpha}$, we have
    \begin{equation}
        \limsup_{n \to \infty}\left\{\begin{array}{ll} \text{maximize} & \EE_{\bY \sim \PP_n} f_n(\bY) \\[0.5em] \text{subject to} & f_n \in \RR[\bY] \cap L^2(\QQ_n) \\[0.5em] & \EE_{\bY \sim \QQ_n} f_n(\bY)^2 = 1 \end{array}\right\} < +\infty.
    \end{equation}
\end{theorem}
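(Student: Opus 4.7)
The plan is to prove the two halves of Theorem~\ref{thm:mixed-spiked-mx} separately. The algorithmic upper bound follows from a direct ``exploration before inference'' test, while the low-degree lower bound reduces to Theorem~\ref{thm:spiked-mx} once one observes that any polynomial in $L^2(\QQ_n)$ must have bounded entrywise degree, precisely because the mixture includes the heavy-tailed component $\QQ_n^{(2)}$.

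\textbf{Upper bound.} I would analyze the test that first computes $M := \max_{i<j} |Y_{ij}|$ and outputs \texttt{q} if $M > (\log n)^2$, and otherwise returns the verdict of $f^{\mathsf{tPCA}}$. Under $\QQ_n$ conditioned on the heavy-tailed branch, the entries are i.i.d.\ with tail $\asymp t^{-(\alpha-1)}$, so $M = \Theta(n^{2/(\alpha-1)})$ with high probability and the threshold is crossed. Under the hyperbolic secant branch of $\QQ_n$ and under $\PP_n = \PP_n^{(1)}$ the entries have exponentially decaying tails (the $\lambda/\sqrt{n}$ shift is negligible), so $M = O(\log n)$ with high probability and the test defers to $f^{\mathsf{tPCA}}$, which by Proposition~\ref{prop:better-than-bbp} strongly distinguishes $\PP_n^{(1)}$ from $\QQ_n^{(1)}$ whenever $\lambda > \lambda_*$. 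This yields strong detection in polynomial time.

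\textbf{Lower bound.} Fix $f_n \in \RR[\bY] \cap L^2(\QQ_n)$ with $\EE_{\QQ_n} f_n^2 = 1$. Since $\QQ_n = \tfrac{1}{2}\QQ_n^{(1)} + \tfrac{1}{2}\QQ_n^{(2)}$, we have $\EE_{\QQ_n^{(j)}} f_n^2 \leq 2$ for both $j\in\{1,2\}$. The key step is an entrywise-degree lemma: integrability against $\QQ_n^{(2)}$ forces $\deg_e(f_n) \leq D$ for every edge $e$, for some integer $D \leq (\alpha-1)/2$. Granted this, set $g_n := f_n/\sqrt{2}$, so that $\EE_{\QQ_n^{(1)}} g_n^2 \leq 1$, and apply Theorem~\ref{thm:spiked-mx} to $\Wig(\rho^{\sech}, \lambda)$; this bounds $\EE_{\PP_n} g_n$ uniformly in $n$ provided $\lambda < \lambda_* + 1/(20D)$. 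Since $D \leq (\alpha-1)/2 < \alpha/2$ we have $1/(20D) > 1/(10\alpha)$, so the hypothesis $\lambda < \lambda_* + 1/(10\alpha)$ suffices. The degenerate case $D = 0$, which arises for small $\alpha$, is immediate since then $f_n$ is a constant.

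\textbf{Main obstacle.} The crux is the entrywise-degree lemma, which must rule out cancellations among high-entrywise-degree monomials. I would proceed edge by edge: fixing $e$ and writing $f_n = \sum_{k=0}^{D_e} Y_e^k g_k(\bY_{-e})$ with $D_e = \deg_e(f_n)$ and $g_{D_e} \not\equiv 0$ as a polynomial, condition on $\bY_{-e}$ and reduce to the one-variable claim that a polynomial $p(x) = \sum_{k=0}^{D_e} c_k x^k$ with $c_{D_e} \neq 0$ cannot lie in $L^2(\rho^{\heavy})$ once $2D_e \geq \alpha - 1$, because $p(x)^2 \sim c_{D_e}^2 \, x^{2D_e}$ at infinity while the density of $\rho^{\heavy}$ is $\asymp |x|^{-\alpha}$. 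Since $g_{D_e}$ is a nonzero polynomial in $\bY_{-e}$, it is nonzero on a set of full measure under $\QQ_n^{(2)}$ restricted to $\bY_{-e}$; Fubini then forces $D_e < (\alpha-1)/2$, and iterating over edges gives the uniform entrywise-degree bound that feeds into the invocation of Theorem~\ref{thm:spiked-mx} above.
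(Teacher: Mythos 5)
Your proposal is correct and follows essentially the same route as the paper: for the lower bound, force bounded entrywise degree from integrability against the heavy-tailed component of $\QQ_n$, use the mixture structure to bound the $\QQ_n^{(1)}$-norm, and invoke Theorem~\ref{thm:spiked-mx}; for the upper bound, threshold on $\max_{i<j}|Y_{ij}|$ to detect the heavy-tailed branch and otherwise defer to $f^{\mathsf{tPCA}}$. You spell out the entrywise-degree lemma (conditioning on $\bY_{-e}$, the leading coefficient being a.s.\ nonzero, Fubini) and the degenerate $D=0$ case in slightly more detail than the paper, and you obtain the marginally tighter bound $D < (\alpha-1)/2$ rather than the paper's $D \le \alpha/2$, but both feed into Theorem~\ref{thm:spiked-mx} in the same way.
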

\noindent
We remark that the optimization above is over \emph{all} polynomials, with no degree constraint; the only constraint is that the polynomials belong to $L^2(\QQ_n)$.
While we have specifically engineered $\Mix(\alpha, \lambda)$ to have fewer polynomials in $L^2(\QQ_n)$, it seems reasonable to conjecture that even algorithms that threshold polynomials in the sense of \eqref{eq:intro-poly-thresh} would fail to achieve strong detection in this model.
On the other hand, the algorithm achieving the first statement is a simple application of exploration before inference: first, it examines the entrywise maximum of $\bY$ to determine with high probability whether $i = 2$ was chosen under $\QQ_n$.
If so, it returns $\texttt{q}$; if not, it applies the test $f^{\mathsf{tPCA}}$ from Proposition~\ref{prop:better-than-bbp}.

One possible resolution of this difficulty is the notion of \emph{coordinate degree} suggested in the formalization of the low-degree method in Hopkins' thesis \cite{Hopkins-2018-Thesis}.
Functions of low coordinate degree are those spanned by functions depending only on a small number of variables; this gives a generalization of low-degree polynomials that is more natural in that it does not depend upon the specific functional basis of low-degree monomials.
At the very least, $\Mix(\cdot, \cdot)$ gives a fairly natural model where optimizing over functions of low coordinate degree rather than low-degree polynomials is necessary for the low-degree method to make correct predictions.

It is an intriguing open problem whether functions of low coordinate degree can, in fact, achieve strong detection in $\Mix(\alpha, \lambda)$ for all $\alpha > 1$ and $\lambda > \lambda_*$ (as well as various other caricatures of situations requiring exploration before inference).
It seems difficult to encode the ``branching'' or ``if statement'' computational step of the algorithm described above into a function of low coordinate degree, and thus plausible that the low-degree method, even augmented with the notion of coordinate degree, might make an incorrect prediction for $\Mix(\alpha, \lambda)$ and similar models.

We note also that exploration before inference need not always coincide with \emph{robustness}, where we want an algorithm to perform well under poorly-behaved random or adversarial corruptions of the inputs.
Exploration before inference could also arise in, e.g., a model choosing one of several different rapidly-decaying noise distributions, where an algorithm might estimate the noise distribution before performing inference.
This type of setting still seems difficult for low-degree polynomials, though they might appear successful in averaged formulations in the style of Proposition~\ref{prop:low-deg-opt} by simply succeeding on one of the noise distributions and neglecting the others.

\section{Low-degree likelihood ratio analysis in NEF-QVFs}
\label{sec:low-deg-nefqvf}

In this section, we give an explicit formulae for $\|L_n^{\leq D}\|^2$ in kin- and additively-spiked NEF-QVF models, and prove the bounds in Theorem~\ref{thm:trunc-lr}.
Our strategy will be to decompose $L_n$ according to orthogonal polynomials in $L^2(\QQ_n)$, and sum the masses of the components of low-degree polynomials.

Therefore, in Section~\ref{sec:orth-poly}, we first review the general description of orthogonal polynomials in NEF-QVFs, and prove some minor further results that will be useful.
Then, in Section~\ref{sec:components} we give formulae for the coefficients of each orthogonal polynomial component of the likelihood ratio in both the kin- and additively-spiked models.
Finally, in Sections~\ref{sec:full-norm} and \ref{sec:trunc-norm}, we prove results concerning the norms of the full and low-degree likelihood ratios in kin-spiked models (including Theorem~\ref{thm:trunc-lr}), where further simplifications are possible.

\subsection{Orthogonal polynomials in NEF-QVFs}
\label{sec:orth-poly}

Our main tool will be that, in NEF-QVFs, there is a remarkable connection between the likelihood ratio and the orthogonal polynomials of $\rho_\theta$.
The likelihood ratio in any NEF is simple:
\begin{equation}
    L(y; \theta) \colonequals \frac{d\rho_\theta}{d\rho_0}(y) = \exp(y\theta - \psi(\theta)),
\end{equation}
where $\psi(\theta) = \EE_{x \sim \rho_0}[\exp(\theta x)]$.
We may also reparametrize in terms of the mean:
\begin{equation}
    \widetilde{L}(y; \mu) \colonequals L(y; (\psi^\prime)^{-1}(\mu)) = \exp(y (\psi^\prime)^{-1}(\mu) - \psi((\psi^\prime)^{-1}(\mu))).
\end{equation}
As the following result of Morris shows, in an NEF-QVF, $\widetilde{L}(y; \mu)$ is a kind of generating function of the orthogonal polynomials of $\widetilde{\rho}_\mu$.
\begin{definition}
    \label{def:a-consts}
    For $v \in \RR$, define the sequences of constants
    \begin{align}
      \what{a}_k(v) &\colonequals \prod_{j = 0}^{k - 1}(1 + vj), \label{eq:what-a} \\
      a_k(v) &\colonequals k! \cdot \what{a}_k(v).
    \end{align}
\end{definition}
\begin{proposition}[NEF-QVF Rodrigues Formula; Theorem 4 of \cite{Morris-1982-NEFQVF}]
    \label{prop:nef-qvf-rodrigues}
    Let $\mu_0 = \psi^{\prime}(0) = \EE_{x \sim \rho_0}[x]$.
    Define the polynomials
    \begin{equation}
        p_k(y; \mu_0) \colonequals \frac{V(\mu_0)^{k}}{\widetilde{L}(y, \mu_0)} \cdot \frac{d^k\widetilde{L}}{d\mu^k}(y, \mu_0).
    \end{equation}
    Then, $p_k(y; \mu_0)$ is a degree $k$ monic polynomial in $y$, and this family satisfies the orthogonality relation
    \begin{equation}
        \Ex_{y \sim \widetilde{\rho}_{\mu_0}} p_k(y; \mu_0)p_\ell(y; \mu_0) = \delta_{k\ell} \cdot a_k(v_2) V(\mu_0)^k.
    \end{equation}
    In particular, defining the normalized polynomials
    \begin{equation}
        \what{p}_k(y; \mu_0) \colonequals \frac{1}{V(\mu_0)^{k / 2}\sqrt{a_k(v_2)}}p_k(y; \mu_0),
    \end{equation}
    the $\what{p}_k(y; \mu_0)$ are orthonormal polynomials for $\widetilde{\rho}_{\mu_0}$.
\end{proposition}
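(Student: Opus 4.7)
The plan is to establish the Rodrigues-type formula in two stages: first verify that $p_k(y;\mu_0)$ is a monic degree-$k$ polynomial in $y$, then derive orthogonality and the normalization together from a computation with the moment functions $m_j(\mu) \colonequals \EE_{\widetilde{\rho}_\mu}[y^j]$. For the first stage, I would let $q_k(y;\mu) \colonequals V(\mu)^k\widetilde{L}(y;\mu)^{-1}\,\partial_\mu^k\widetilde{L}(y;\mu)$, so that $p_k(y;\mu_0) = q_k(y;\mu_0)$, and use the elementary identity $\partial_\mu\widetilde{L} = (y-\mu)V(\mu)^{-1}\widetilde{L}$ to differentiate the relation $\partial_\mu^k\widetilde{L} = V^{-k}q_k\widetilde{L}$ once more in $\mu$. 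This yields the recursion
\[ q_{k+1}(y;\mu) = V(\mu)\,\partial_\mu q_k(y;\mu) + \bigl(y - \mu - k V'(\mu)\bigr)\, q_k(y;\mu). \]
Starting from $q_0 = 1$, an induction shows that each $q_k$ is a polynomial in $y$ with leading term $y^k$, since only the $(y-\mu)q_k$ factor raises the $y$-degree, and does so by exactly one.

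For the second stage, the key identity---valid for any polynomial $q$ in $y$---is
\[ \EE_{\widetilde{\rho}_{\mu_0}}\bigl[p_k(y;\mu_0)\,q(y)\bigr] = V(\mu_0)^k\, \partial_\mu^k \EE_{\widetilde{\rho}_\mu}[q(y)]\Big|_{\mu=\mu_0}, \]
which follows by substituting the Rodrigues formula for $p_k$, cancelling $\widetilde{L}(y;\mu_0)$ against the density of $\widetilde{\rho}_{\mu_0}$, and pulling $\partial_\mu^k$ outside the integral (justified by smoothness of $\widetilde{L}$ in $\mu$). Everything then reduces to understanding the moment polynomials $m_j(\mu)$. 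I would prove that in any NEF-QVF, $m_j(\mu)$ is a polynomial in $\mu$ of degree exactly $j$ with leading coefficient $\what{a}_j(v_2)$, using the recursion $m_{j+1}(\mu) = \mu\,m_j(\mu) + V(\mu)\,m_j'(\mu)$ obtained by specializing the general exponential-family identity $\partial_\mu \EE_{\widetilde{\rho}_\mu}[f] = V(\mu)^{-1}\EE_{\widetilde{\rho}_\mu}[(y-\mu)f]$ to $f(y) = y^j$. Inducting from $m_0 = 1$: if the $\mu^j$ coefficient of $m_j$ is $c_j$, then the $\mu^{j+1}$ coefficient of $m_{j+1}$ receives $c_j$ from $\mu\,m_j$ and $j v_2 c_j$ from $V\,m_j'$ (using only $V(\mu) = v_2\mu^2 + \text{lower}$), giving $c_{j+1} = (1 + j v_2)c_j$ and hence $c_j = \what{a}_j(v_2)$.

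The conclusion is then immediate: for $\ell < k$, $p_\ell(y;\mu_0)$ is a polynomial in $y$ of degree $\ell$, so $\mu \mapsto \EE_{\widetilde{\rho}_\mu}[p_\ell(y;\mu_0)]$ is a polynomial in $\mu$ of degree $\le \ell < k$ whose $k$-th derivative vanishes, yielding orthogonality; and monicness of $p_k$ together with orthogonality reduces $\EE[p_k^2]$ to $\EE[p_k \cdot y^k] = V(\mu_0)^k\,\partial_\mu^k m_k(\mu)|_{\mu_0} = V(\mu_0)^k \cdot k!\cdot \what{a}_k(v_2) = V(\mu_0)^k\, a_k(v_2)$. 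The main subtlety I anticipate lies in the leading-coefficient computation for $m_j$: this is the only step where the quadratic structure of $V$ is decisive, since if $V$ had degree greater than $2$ the recursion $m_{j+1} = \mu m_j + V m_j'$ would force the moment polynomials to have degree strictly greater than $j$, after which the derivative argument for orthogonality would collapse entirely.
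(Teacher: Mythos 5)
The paper does not prove this proposition: it is cited verbatim as Theorem~4 of \cite{Morris-1982-NEFQVF}, so there is no internal proof to compare against. Taking your argument on its own terms, it is correct and complete. The three pieces each check out: the identity $\partial_\mu\widetilde{L} = (y-\mu)V(\mu)^{-1}\widetilde{L}$ follows from $\widetilde{L} = \exp(y\theta - \psi(\theta))$ with $\theta = (\psi')^{-1}(\mu)$ and $d\theta/d\mu = 1/V(\mu)$; the recursion $q_{k+1} = V\,\partial_\mu q_k + (y - \mu - kV')q_k$ then follows by one more $\mu$-differentiation, and since only the $(y-\mu)q_k$ term raises $y$-degree (with leading coefficient carried through unchanged from $q_0 = 1$), each $q_k$ is monic of degree $k$. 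The integration-by-parts identity $\EE_{\widetilde{\rho}_{\mu_0}}[p_k\,q] = V(\mu_0)^k\,\partial_\mu^k\EE_{\widetilde{\rho}_\mu}[q]\big|_{\mu_0}$ is exactly the cancellation of $\widetilde{L}(y;\mu_0)$ against $d\widetilde{\rho}_{\mu_0} = \widetilde{L}(\cdot;\mu_0)\,d\rho_0$, plus differentiation under the integral (which holds here since $\widetilde{L}$ is analytic in $\mu$ on the interior of $\Omega$ and polynomial $q$ is dominated by the exponential tails). The moment recursion $m_{j+1} = \mu m_j + V m_j'$ and the induction $c_{j+1} = (1 + jv_2)c_j$, hence $c_j = \what{a}_j(v_2)$, is the standard route to the normalization constant, and you correctly identify where the quadratic variance structure enters: if $\deg V > 2$ the moment polynomials would have degree exceeding $j$ and the vanishing-derivative argument for orthogonality would fail. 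For $\EE[p_k^2]$ you correctly reduce to $\EE[p_k\,y^k]$ by subtracting the lower-degree part (itself annihilated by the orthogonality just proved), then apply the identity to get $V(\mu_0)^k\cdot k!\,\what{a}_k(v_2) = V(\mu_0)^k a_k(v_2)$. This is essentially Morris' own line of proof; a clean, self-contained rendering of it where the paper merely cites.
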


The main property of these polynomials that will be useful for us is the following identity, also obtained by Morris, giving the expectation of a given orthogonal polynomial under the kin spiking operation, i.e., under a different distribution from the same NEF-QVF.
\begin{proposition}[Corollary 1 of \cite{Morris-1982-NEFQVF}]
    For all $k \in \NN$ and $x, \mu \in \Omega$,
    \begin{equation}
        \Ex_{y \sim \widetilde{\rho}_{x}} p_k(y; \mu) = \what{a}_k(v_2) (x - \mu)^k.
    \end{equation}
\end{proposition}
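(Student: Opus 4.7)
My plan is to bundle the sequence $\Ex_{y \sim \widetilde{\rho}_x}[p_k(y;\mu)]$ into a single generating function in a dummy variable $t$ and identify it in closed form. The Rodrigues formula of Proposition~\ref{prop:nef-qvf-rodrigues} gives $p_k(y;\mu) = V(\mu)^k \widetilde{L}(y;\mu)^{-1}\partial_\mu^k \widetilde{L}(y;\mu)$, so summing over $k$ weighted by $t^k/k!$ converts the right-hand side into the Taylor expansion of $\widetilde{L}(y;\mu+t)$ about $\mu$ divided by $\widetilde{L}(y;\mu)$:
\[
\sum_{k \geq 0} \frac{t^k\, p_k(y;\mu)}{k!\,V(\mu)^k} \;=\; \frac{\widetilde{L}(y;\mu+t)}{\widetilde{L}(y;\mu)}.
\]

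Next I would take expectations under $\widetilde{\rho}_x$ and integrate the right-hand side in closed form. Writing $\theta \colonequals (\psi')^{-1}$, we have $d\widetilde{\rho}_x(y) = \exp(y\theta(x) - \psi(\theta(x)))\,d\rho_0(y)$, and $\widetilde{L}(y;\mu+t)/\widetilde{L}(y;\mu) = \exp\bigl(y(\theta(\mu+t) - \theta(\mu)) - \psi(\theta(\mu+t)) + \psi(\theta(\mu))\bigr)$, so integration against $\rho_0$ packages into one more evaluation of $\psi$:
\[
G(x,\mu,t) \colonequals \Ex_{y \sim \widetilde{\rho}_x}\!\left[\frac{\widetilde{L}(y;\mu+t)}{\widetilde{L}(y;\mu)}\right] = \exp\bigl(\psi(\theta(x) + \theta(\mu+t) - \theta(\mu)) - \psi(\theta(x)) - \psi(\theta(\mu+t)) + \psi(\theta(\mu))\bigr),
\]
valid for $|t|$ small enough that the shifted natural parameter $\theta(x) + \theta(\mu+t) - \theta(\mu)$ remains in $\Theta$.

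The claim is then equivalent to the closed-form identity $G(x,\mu,t) = f(t(x-\mu)/V(\mu);\,v_2)$, with $f$ as in Definition~\ref{def:f}: granting this, the generalized binomial series $f(s;v) = \sum_k \what{a}_k(v)\,s^k/k!$ (the expansion of $(1-vs)^{-1/v}$, or the exponential when $v = 0$) extracts the Taylor coefficient $\what{a}_k(v_2)(x-\mu)^k/(k!\,V(\mu)^k)$, and multiplying through by $k!\,V(\mu)^k$ yields $\Ex[p_k(y;\mu)] = \what{a}_k(v_2)(x-\mu)^k$.

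The main obstacle is establishing the closed-form identity for $G$, which is where the QVF hypothesis is actually used; the preceding steps are purely formal manipulations. My preferred strategy is to lean on the classification recalled just before Definition~\ref{def:f}: every NEF-QVF is an affine image of one of the six basic families in Table~\ref{tab:nef-qvf}, and both sides of $G(x,\mu,t) = f(t(x-\mu)/V(\mu);v_2)$ transform consistently under $Y \mapsto aY + b$ (which rescales $V$, shifts $\theta$, and shifts the mean), so it suffices to compute $\psi$, $\theta$, and $V$ explicitly for the six representatives and verify the identity case by case. A more intrinsic alternative is to exploit the ODE $\psi''(\theta) = V(\psi'(\theta))$: with $V$ quadratic this integrates in elementary closed form in each of the three regimes $v_2 < 0$, $v_2 = 0$, $v_2 > 0$, after which the identity for $G$ drops out after some algebra. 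Either route reduces the remaining work to a direct but uninspiring calculation.
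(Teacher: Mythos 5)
The paper supplies no proof of this proposition---it is cited directly as Corollary 1 of \cite{Morris-1982-NEFQVF}---so there is no in-paper argument to compare against. Your proposal is a correct reconstruction of the standard argument: the first display is exactly the Taylor expansion of $\widetilde{L}(y;\,\cdot\,)$ about $\mu$ packaged by the Rodrigues formula, taking expectations under $\widetilde{\rho}_x$ genuinely collapses to the single closed expression $G(x,\mu,t)$ you write, and you correctly isolate the one place the QVF hypothesis does real work, namely the identity $G(x,\mu,t) = f\bigl(t(x-\mu)/V(\mu);\,v_2\bigr)$, which by Proposition~\ref{prop:f-gen-fun} immediately yields the stated coefficient formula. (I checked this identity directly for the Gaussian, Poisson, gamma, and Bernoulli families and it holds.)

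The one thing I would push back on is the claim that the remaining step is merely ``direct but uninspiring.'' Of your two strategies, the case-check requires first proving that both sides of the identity transform covariantly under affine maps, which is real additional bookkeeping you mention only in passing. The ODE route is cleaner and I would spell it out: with $\theta \colonequals (\psi')^{-1}$ and $\phi(t) \colonequals \log G(x,\mu,t)$, the chain rule and $\theta'(\mu) = 1/V(\mu)$ give
\begin{equation*}
\phi'(t) = \frac{\psi'\bigl(\theta(x) + \theta(\mu+t) - \theta(\mu)\bigr) - (\mu+t)}{V(\mu+t)},
\end{equation*}
and the QVF relation $\psi'' = V(\psi')$ with $V$ quadratic lets one solve for $M(t) \colonequals \psi'\bigl(\theta(x) + \theta(\mu+t) - \theta(\mu)\bigr)$ in closed form (it satisfies $M' = V(M)/V(\mu+t)$ with $M(0) = x$), after which a line of algebra shows $\phi'(t) = (x-\mu)/\bigl(V(\mu) - v_2\,t(x-\mu)\bigr)$, which is exactly the logarithmic derivative of $f\bigl(t(x-\mu)/V(\mu);\,v_2\bigr)$ from Definition~\ref{def:f}. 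So the outline is sound, but the proposal stops short of the one QVF-specific computation on which the whole statement actually rests.
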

\noindent
We may obtain a straightforward further corollary by including the normalization, which allows us to incorporate the variance factor into a $z$-score, as follows.
\begin{corollary}[Kin-spiked expectation]
    \label{cor:mismatched-mean}
    For all $k \in \NN$ and $x, \mu \in \Omega$,
    \[ \Ex_{y \sim \widetilde{\rho}_{x}} \what{p}_k(y; \mu) = \sqrt{\frac{\what{a}_k(v_2)}{k!}} z_{\mu}(x)^k. \]
\end{corollary}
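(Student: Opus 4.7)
The plan is a direct substitution chain, since the statement is a bookkeeping rearrangement of the preceding Proposition (Morris' Corollary 1). First I would pull the deterministic normalization factor out of the expectation using the definition of $\what{p}_k$ from Proposition~\ref{prop:nef-qvf-rodrigues}, namely
\[
\Ex_{y \sim \widetilde{\rho}_x} \what{p}_k(y;\mu) \;=\; \frac{1}{V(\mu)^{k/2}\sqrt{a_k(v_2)}}\,\Ex_{y \sim \widetilde{\rho}_x} p_k(y;\mu).
\]
Second, I would apply the kin-spiked expectation formula from the Proposition immediately preceding the corollary to replace the remaining expectation by $\what{a}_k(v_2)(x-\mu)^k$.

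Third, I would use the identity $a_k(v_2) = k!\,\what{a}_k(v_2)$ built into Definition~\ref{def:a-consts} to simplify
\[
\frac{\what{a}_k(v_2)}{\sqrt{a_k(v_2)}} \;=\; \sqrt{\frac{\what{a}_k(v_2)}{k!}},
\]
and finally recognize the remaining factor $(x-\mu)^k/V(\mu)^{k/2}$ as $z_\mu(x)^k$ by the $z$-score definition in Theorem~\ref{thm:trunc-lr}. Composing these three steps yields the claimed identity.

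There is essentially no obstacle to this argument. One minor sanity check is that $V(\mu) > 0$ for all $\mu \in \Omega$, so that $V(\mu)^{k/2}$ is unambiguously defined; this was already recorded in the discussion following the definition of $V$, since $\psi''(\theta) > 0$ whenever $\rho_0$ is not an atom. The whole proof is therefore a one-line computation whose only content is tracking the normalization constants.
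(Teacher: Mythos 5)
Your proof is correct and matches the paper's intent exactly: the paper states this corollary follows ``by including the normalization'' and gives no further argument, and your substitution chain (unnormalize via the Rodrigues formula, apply Morris' Corollary 1, fold $a_k = k!\,\what{a}_k$ back into a square root, recognize the $z$-score) is precisely that one-line computation made explicit.
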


We will also be interested in the analogous result for additive spiking.
This result is less elegant, and is expressed in terms of another polynomial sequence.
First, we write the precise generating function relation between the likelihood ratio and the orthogonal polynomials.
\begin{proposition}[Generating function]
    Let $\mu \in \Omega$ and write $\psi(\eta) = \EE_{x \sim \widetilde{\rho}_{\mu}}[\exp(\eta x)]$.
    Then,
    \begin{align}
      \sum_{k \geq 0} \frac{z_{\mu}(t)^k}{k!}p(y; \mu) = \exp\big(y (\psi^\prime)^{-1}(t) - \psi((\psi^\prime)^{-1}(t))\big). \label{eq:op-gen-fn}
    \end{align}
\end{proposition}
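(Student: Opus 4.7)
The plan is to recognize this identity as the Taylor expansion, in the mean parameter, of the mean-parametrized likelihood ratio $\widetilde{L}(y,\cdot)$ about $\mu$, with the Rodrigues formula of Proposition~\ref{prop:nef-qvf-rodrigues} supplying a closed form for each Taylor coefficient.

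First, I would rewrite the right-hand side transparently. Let $\psi_0$ denote the cumulant function of the generating measure $\rho_0$ and set $\theta_\mu \colonequals (\psi_0^\prime)^{-1}(\mu)$, so that $\widetilde{L}(y,\mu) = \exp(y\theta_\mu - \psi_0(\theta_\mu))$. The standard translation identity $\psi(\eta) = \psi_0(\theta_\mu+\eta) - \psi_0(\theta_\mu)$ for the cumulant function of $\widetilde{\rho}_\mu$ then gives, with $\eta_t \colonequals (\psi^\prime)^{-1}(t)$,
\begin{equation*}
    \exp\!\bigl(y\eta_t - \psi(\eta_t)\bigr) \;=\; \frac{\widetilde{L}(y,t)}{\widetilde{L}(y,\mu)},
\end{equation*}
so the right-hand side of the proposition equals the ratio of mean-parametrized likelihoods at $t$ and $\mu$.

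Next, I would Taylor expand $h \mapsto \widetilde{L}(y,\mu+h)$ about $h = 0$. Since $\widetilde{L}(y,\cdot)$ is analytic in the natural parameter and $(\psi_0^\prime)^{-1}$ is analytic on the interior of $\Omega$ (standard facts about NEFs), the series converges for $h$ in a neighborhood of $0$ at each fixed $y$, and
\begin{equation*}
    \frac{\widetilde{L}(y,\mu+h)}{\widetilde{L}(y,\mu)} \;=\; \sum_{k \geq 0} \frac{h^k}{k!\,\widetilde{L}(y,\mu)}\,\frac{d^k \widetilde{L}}{d\mu^k}(y,\mu) \;=\; \sum_{k \geq 0} \frac{h^k}{k!\,V(\mu)^k}\, p_k(y;\mu),
\end{equation*}
where the second equality is exactly Proposition~\ref{prop:nef-qvf-rodrigues}. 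Setting $h = t-\mu$ and regrouping the powers of $V(\mu)$ into the $z$-score $z_\mu(t) = (t-\mu)/\sqrt{V(\mu)}$, together with the normalization converting $p_k$ into $\what{p}_k$, rearranges this into the form stated in the proposition.

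The main obstacle is essentially bookkeeping across parametrizations (natural versus mean, the local cumulant function $\psi$ at $\mu$ versus the original cumulant function $\psi_0$) and carefully tracking powers of $V(\mu)$ so that the $z$-score and polynomial normalizations emerge correctly; no analytic machinery beyond Taylor's theorem is required. Once the identity is established on a neighborhood of $\mu$, it extends to the full connected component of $\Omega$ containing $\mu$ by the identity theorem for analytic functions in $t$, since both sides are analytic on that component.
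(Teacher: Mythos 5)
Your overall strategy --- recognize the right-hand side as the mean-parametrized likelihood ratio $\widetilde{L}(y,t)/\widetilde{L}(y,\mu)$ after accounting for the rebased cumulant function, Taylor expand in the mean parameter, and read off the coefficients from the NEF-QVF Rodrigues formula of Proposition~\ref{prop:nef-qvf-rodrigues} --- is the right one, and since the paper states this proposition without proof, it is presumably also what the author had in mind. However, the final step, where you assert that the powers of $V(\mu)$ ``regroup'' into the $z$-score and polynomial normalizations, is left unchecked, and if you do check it, it does not produce the stated form. The Rodrigues formula gives $\widetilde{L}(y,\mu)^{-1}\,\tfrac{d^k\widetilde{L}}{d\mu^k}(y,\mu) = p_k(y;\mu)/V(\mu)^k$, so with $h = t-\mu$ the Taylor expansion yields
\begin{equation*}
\frac{\widetilde{L}(y,t)}{\widetilde{L}(y,\mu)} \;=\; \sum_{k \geq 0}\frac{(t-\mu)^k}{k!\,V(\mu)^k}\,p_k(y;\mu) \;=\; \sum_{k\geq 0}\frac{z_\mu(t)^k}{k!\,V(\mu)^{k/2}}\,p_k(y;\mu),
\end{equation*}
which differs from the stated $\sum_{k}\tfrac{z_\mu(t)^k}{k!}p_k(y;\mu)$ by a factor $V(\mu)^{k/2}$ in each term; switching to the normalized $\widehat{p}_k$ absorbs that factor but introduces a new one, $\sqrt{a_k(v_2)}$, so that does not resolve it either. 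A concrete sanity check is the Gaussian NEF with variance $\sigma^2 \neq 1$: there $p_1(y;\mu)=y-\mu$, the coefficient of $(t-\mu)$ on the right-hand side is $(y-\mu)/\sigma^2$, while the stated left-hand side gives $(y-\mu)/\sigma$. This is a normalization slip in the paper's statement (the same slip appears in Definition~\ref{def:translation-polynomials}, and the two cancel in the Addition Formula and in Proposition~\ref{prop:additively-spiked-expectation}, so nothing downstream is affected); the correct generating-function coefficient is $(t-\mu)^k/V(\mu)^k$ rather than $z_\mu(t)^k$. Your proof identifies exactly the step where this must be checked --- ``carefully tracking powers of $V(\mu)$'' --- but then waves it through; a complete argument along your lines should carry the bookkeeping out explicitly and arrive at the corrected form rather than claiming the stated one follows.
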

\noindent
Note that here we are ``rebasing'' the NEF-QVF to have $\widetilde{\rho}_{\mu}$ as the base measure by our definition of $\psi(\cdot)$.
One may view this result as generalizing to NEF-QVFs the generating function $\exp(ty - \frac{1}{2}t^2)$ for Hermite polynomials.
The key property of such generating functions is that $y$ appears \emph{linearly} in the exponential.
(Indeed, as early as 1934, Meixner had essentially discovered the NEF-QVFs, albeit only recognizing their significance in terms of this distinctive property of their orthogonal polynomials \cite{Meixner-1934-OrthogonalPolynomialsGeneratingFunction,Lancaster-1975-DistributionsMeixnerClasses}.)

This linearity allows us to prove an addition formula, expanding the translation operator in orthogonal polynomials.
\begin{definition}[Translation polynomials]
    \label{def:translation-polynomials}
    Let $\tau_k(y; \mu) \in \RR[y]$ be defined by the generating function
    \begin{equation}
        \sum_{k \geq 0} \frac{z_{\mu}(t)^k}{k!}\tau_k(y; \mu) \colonequals \exp\big(y (\psi^\prime)^{-1}(t)\big).
    \end{equation}
    Also, define the normalized versions
    \begin{equation}
        \widehat{\tau}(y; \mu) \colonequals \frac{1}{V(\mu)^{k / 2}\sqrt{a_k(v_2)}}\tau_k(x; \mu).
    \end{equation}
\end{definition}
\begin{proposition}[Addition formula]
    For all $x, y \in \RR$ and $\mu \in \Omega$,
    \begin{equation}
        \label{eq:addition-formula}
        p_k(x + y; \mu) = \sum_{\ell = 0}^k \binom{k}{\ell}\tau_{k - \ell}(x; \mu) p_{\ell}(y; \mu).
    \end{equation}
\end{proposition}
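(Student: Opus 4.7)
The plan is to exploit the multiplicativity of the exponential function together with the two generating function identities that have just been stated, namely the one for the $p_k$ in~\eqref{eq:op-gen-fn} and the one defining the $\tau_k$ in Definition~\ref{def:translation-polynomials}. The key observation is that both generating functions have the same ``clock'' $z_\mu(t)^k / k!$, so Cauchy-multiplying them is precisely what produces the binomial convolution on the right-hand side of~\eqref{eq:addition-formula}.

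Concretely, I would first substitute $x + y$ for $y$ in~\eqref{eq:op-gen-fn}, obtaining
\begin{equation*}
    \sum_{k \geq 0} \frac{z_\mu(t)^k}{k!}\, p_k(x + y; \mu) \;=\; \exp\bigl((x + y)(\psi^\prime)^{-1}(t) - \psi((\psi^\prime)^{-1}(t))\bigr).
\end{equation*}
Using $e^{A+B} = e^A e^B$ with $A = x(\psi^\prime)^{-1}(t)$ and $B = y(\psi^\prime)^{-1}(t) - \psi((\psi^\prime)^{-1}(t))$, the right-hand side factors as the product of the defining generating function for $\tau_j(x; \mu)$ and the generating function for $p_\ell(y; \mu)$:
\begin{equation*}
    = \Bigl(\sum_{j \geq 0} \tfrac{z_\mu(t)^j}{j!}\tau_j(x; \mu)\Bigr) \Bigl(\sum_{\ell \geq 0} \tfrac{z_\mu(t)^\ell}{\ell!}p_\ell(y; \mu)\Bigr).
\end{equation*}

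Next I would Cauchy-multiply and collect powers of $z_\mu(t)$, writing
\begin{equation*}
    \sum_{j, \ell \geq 0} \frac{z_\mu(t)^{j + \ell}}{j!\, \ell!}\, \tau_j(x; \mu) p_\ell(y; \mu) \;=\; \sum_{k \geq 0} \frac{z_\mu(t)^k}{k!} \sum_{\ell = 0}^k \binom{k}{\ell}\, \tau_{k - \ell}(x; \mu)\, p_\ell(y; \mu),
\end{equation*}
after which comparing coefficients of $z_\mu(t)^k$ on both sides yields~\eqref{eq:addition-formula}. The justification for comparing coefficients is that $z_\mu(t) = (t - \mu)/\sqrt{V(\mu)}$ is an invertible affine function of $t$, so powers of $z_\mu(t)$ form a Schauder basis for formal power series (equivalently, for analytic functions) in $t$ about $t = \mu$, and both sides of the manipulation above are honest analytic functions of $t$ on a neighborhood of $\mu$ (this is where being inside the domain of $\psi$ matters).

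The only mild obstacle is bookkeeping around the rebasing: the statement works in the NEF-QVF ``rebased'' at $\mu$ (so that $\psi(0) = 0$, $\psi^\prime(0) = \mu$, $V(\mu) > 0$, and $(\psi^\prime)^{-1}(\mu) = 0$), which is exactly the convention established in the preceding proposition. With this convention the generating function identities are analytic identities in a neighborhood of $t = \mu$, and there is no convergence issue. Everything else is formal manipulation; the identity is really just the exponential law $e^{(x+y)\alpha} = e^{x\alpha} e^{y\alpha}$ read off through the two generating-function dictionaries.
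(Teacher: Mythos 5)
Your argument is correct and coincides with the paper's proof, which likewise states that the identity follows from expanding the generating function~\eqref{eq:op-gen-fn} at $x + y$ as a product of two exponential generating functions; you have simply carried out that Cauchy-product and coefficient comparison explicitly, with a careful note on the rebasing convention.
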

\begin{proof}
    This follows from expanding the generating function \eqref{eq:op-gen-fn} at $x + y$ as a product of two exponential generating functions.
\end{proof}

Finally, we obtain the additively-spiked version of Corollary~\ref{cor:mismatched-mean} by taking expectations and using the orthogonality of the $p_k$.
\begin{proposition}[Additively-spiked expectation]
    \label{prop:additively-spiked-expectation}
    For all $k \in \NN$, $\mu \in \Omega$, and $x \in \RR$,
    \begin{equation}
        \Ex_{y \sim \widetilde{\rho}_{\mu}} \widehat{p}_k(x + y; \mu) = \widehat{\tau}_k(x; \mu).
    \end{equation}
\end{proposition}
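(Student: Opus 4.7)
The plan is to derive the identity directly from the addition formula \eqref{eq:addition-formula}, using orthogonality of the $p_\ell$ in $L^2(\widetilde{\rho}_\mu)$ to collapse the sum to a single term. Concretely, I would first apply the addition formula at the point $x + y$, obtaining
\begin{equation}
    p_k(x + y; \mu) = \sum_{\ell = 0}^k \binom{k}{\ell} \tau_{k - \ell}(x; \mu)\, p_{\ell}(y; \mu),
\end{equation}
and then take the expectation of both sides with respect to $y \sim \widetilde{\rho}_\mu$, noting that $\tau_{k-\ell}(x;\mu)$ does not depend on $y$ and can be pulled out.

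The key observation is that Proposition~\ref{prop:nef-qvf-rodrigues} gives orthogonality of $p_\ell(\cdot;\mu)$ against $p_0(\cdot;\mu) \equiv 1$ under $\widetilde{\rho}_\mu$ for every $\ell \geq 1$, so $\Ex_{y \sim \widetilde{\rho}_\mu} p_\ell(y; \mu) = 0$ for $\ell \geq 1$, while the $\ell = 0$ term yields $1$. Thus only the $\ell = 0$ summand survives, and
\begin{equation}
    \Ex_{y \sim \widetilde{\rho}_\mu} p_k(x + y; \mu) = \tau_k(x; \mu).
\end{equation}
Dividing both sides by the common normalization factor $V(\mu)^{k/2}\sqrt{a_k(v_2)}$ from Proposition~\ref{prop:nef-qvf-rodrigues} and Definition~\ref{def:translation-polynomials} then produces the desired identity for the hatted versions $\widehat{p}_k$ and $\widehat{\tau}_k$.

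There is essentially no obstacle here beyond verifying that the two generating function normalizations are compatible: since both $\widehat{p}_k$ and $\widehat{\tau}_k$ use the same scaling $V(\mu)^{-k/2} a_k(v_2)^{-1/2}$, the normalization passes through the linear operation $\Ex_{y \sim \widetilde{\rho}_\mu}[\cdot]$ cleanly. The only minor subtlety worth stating is that the addition formula, as derived from splitting the exponential generating function \eqref{eq:op-gen-fn} into a product of two generating functions in $x$ and $y$, genuinely holds at all $x \in \RR$ (not only $x \in \Omega$), which is exactly what is needed for the additively-spiked model where $x$ ranges over $\RR$ while $\mu$ remains in $\Omega$.
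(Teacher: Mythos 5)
Your proposal is correct and follows essentially the same route as the paper's proof: apply the addition formula \eqref{eq:addition-formula} at $x+y$, take the $\widetilde{\rho}_\mu$-expectation in $y$, use orthogonality of $p_\ell$ against $p_0 \equiv 1$ to kill all terms with $\ell \geq 1$, and then pass to the normalized polynomials. The extra sentence noting that the addition formula is valid for all $x \in \RR$ (not just $x \in \Omega$) is a small but correct observation the paper leaves implicit.
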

\begin{proof}
    This follows from taking expectations on either side of \eqref{eq:addition-formula}, observing that the only non-zero term is for $\ell = 0$ by the orthogonality of the $p_{\ell}$, and noting that $p_{0}(y; \mu) = 1$.
\end{proof}

\subsection{Components of the likelihood ratio}
\label{sec:components}

Returning to the multivariate setting of our results, let $\QQ_n$ and $\PP_n$ be as in Definition~\ref{def:kin-spiked} of the kin-spiked NEF-QVF model.
Then, the likelihood ratio is
\begin{equation}
    \label{eq:lr-def}
    L_n(\by) \colonequals \frac{d\PP_n}{d\QQ_n}(\by) = \Ex_{\bx \sim \mathcal{P}_n}\left[ \prod_{i = 1}^{N}\frac{d\widetilde{\rho}_{x_i}}{d\widetilde{\rho}_{\mu_{n,i}}}(y_i)\right].
\end{equation}
An orthonormal system of polynomials for $\QQ_n$ is given by the product basis formed from the $\what{p}_k(y; \mu_{n, i})$ that we defined in Proposition~\ref{prop:nef-qvf-rodrigues}:
\begin{equation}
    \what{P}_{\bm k}(\bm y; \bm \mu_n) \colonequals \prod_{i = 1}^N \what{p}_{k_i}(y_i; \mu_{n, i})
\end{equation}
for $\bm k \in \NN^N$, where $\bm \mu_n \colonequals (\mu_{n, 1}, \ldots, \mu_{n, N(n)})$.

We show that the projection of $L_n$ onto any component $\what{P}_{\bm k}(\cdot; \bm \mu_n)$ admits the following convenient expression in terms of the $z$-score.
\begin{lemma}[Components under kin spiking]
    \label{lem:Ln-components-kin}
    In the kin-spiked NEF-QVF model, for all $\bm k \in \NN^N$,
    \begin{equation}
        \la L_n, \what{P}_{\bm k}(\cdot; \bm \mu_n) \ra = \sqrt{\frac{\prod_{i = 1}^N \what{a}_{k_i}(v_2)}{\prod_{i = 1}^N k_i!}}\Ex_{\bx \sim \sP_n} \left[\prod_{i = 1}^N z_{\mu_{n, i}}(x_i)^{k_i}\right].
    \end{equation}
\end{lemma}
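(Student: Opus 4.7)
The plan is to unfold the definitions and reduce to the univariate kin-spiked expectation formula (Corollary~\ref{cor:mismatched-mean}) coordinate by coordinate. Since $\what{P}_{\bm k}(\cdot; \bm \mu_n)$ factorizes as a product over coordinates, and under $\PP_n$ the observations $y_i$ are conditionally independent given $\bx$, there is essentially nothing to do beyond Fubini plus the factorization of products.

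First I would rewrite the inner product in $L^2(\QQ_n)$ as an expectation under $\PP_n$, using the change-of-measure identity
\begin{equation*}
\la L_n, \what{P}_{\bm k}(\cdot; \bm \mu_n)\ra = \Ex_{\by \sim \QQ_n}\bigl[L_n(\by)\,\what{P}_{\bm k}(\by; \bm \mu_n)\bigr] = \Ex_{\by \sim \PP_n}\bigl[\what{P}_{\bm k}(\by; \bm \mu_n)\bigr].
\end{equation*}
Next, I would apply the definition of $\PP_n$ from Definition~\ref{def:kin-spiked}, conditioning on $\bx \sim \sP_n$ and using the conditional independence of the $y_i \sim \widetilde{\rho}_{x_i}$ together with the product form of $\what{P}_{\bm k}$, to get
\begin{equation*}
\Ex_{\by \sim \PP_n}\bigl[\what{P}_{\bm k}(\by; \bm \mu_n)\bigr] = \Ex_{\bx \sim \sP_n}\prod_{i=1}^N \Ex_{y_i \sim \widetilde{\rho}_{x_i}}\bigl[\what{p}_{k_i}(y_i; \mu_{n,i})\bigr].
\end{equation*}

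Then I would invoke Corollary~\ref{cor:mismatched-mean} in each factor to replace the inner expectation by $\sqrt{\what{a}_{k_i}(v_2)/k_i!}\; z_{\mu_{n,i}}(x_i)^{k_i}$, and pull the deterministic normalization constants out of $\Ex_{\bx \sim \sP_n}$. Combining the product of square roots into a single square root of a product yields exactly the claimed expression.

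The only step with any potential subtlety is justifying the swap of expectations in the second display, but this is immediate from Tonelli/Fubini given the independence structure of $\PP_n$ conditional on $\bx$; no integrability obstruction arises since we are working with the finite-dimensional product structure prescribed by Definition~\ref{def:kin-spiked}. Apart from that, this lemma is a straightforward assembly of the univariate identity into the product basis, with no genuine obstacle.
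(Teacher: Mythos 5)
Your proposal is correct and follows exactly the paper's own proof: change of measure to rewrite the inner product as an expectation under $\PP_n$, factorize using the conditional independence of coordinates given $\bx$, and apply Corollary~\ref{cor:mismatched-mean} coordinatewise. No meaningful differences.
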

\begin{proof}
    Performing a change of measure using the likelihood ratio and factorizing the inner product using independence of coordinates under $\QQ_n$, we find
    \begin{align*}
      \la L_n, \what{P}_{\bm k}(\cdot; \bm \mu_n) \ra
      &= \Ex_{\by \sim \QQ_n}\left[ L_n(\by) \what{P}_{\bm k}(\by; \bm \mu_n) \right] \\
      &= \Ex_{\by \sim \PP_n}\left[ \what{P}_{\bm k}(\by; \bm \mu_n) \right] \\
      &= \Ex_{\bx \sim \sP_n}\left[ \prod_{i = 1}^N \Ex_{y_i \sim \widetilde{\rho}_{x_i}}\left[ \what{p}_{k_i}(y_i; \mu_{n, i}) \right]\right] \\
        \intertext{and using Corollary~\ref{cor:mismatched-mean},}
      &= \sqrt{\frac{\prod_{i = 1}^N \what{a}_{k_i}(v_2)}{\prod_{i = 1}^N k_i!}} \Ex_{\bx \sim \sP_n}\left[ \prod_{i = 1}^N z_{\mu_{n, i}}(x_i)^{k_i} \right],
    \end{align*}
    completing the proof.
\end{proof}

Following the same argument for the additively-spiked model and using Proposition~\ref{prop:additively-spiked-expectation} instead of Corollary~\ref{cor:mismatched-mean} gives the following similar result.
\begin{lemma}[Components under additive spiking]
    \label{lem:Ln-components-additive}
    In the additively-spiked NEF-QVF model, for all $\bm k \in \NN^N$,
    \begin{equation}
        \la L_n, \what{P}_{\bm k}(\cdot; \bm \mu_n) \ra = \Ex_{\bx \sim \sP_n} \left[\prod_{i = 1}^N \widehat{\tau}_{k_i}(x_i; \mu_{n, i})\right].
    \end{equation}
\end{lemma}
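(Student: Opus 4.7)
The plan is to mirror the proof of Lemma~\ref{lem:Ln-components-kin} (components under kin spiking) essentially verbatim, substituting Proposition~\ref{prop:additively-spiked-expectation} for Corollary~\ref{cor:mismatched-mean} at the final step. The structural reason this works is that in both models the inner product $\la L_n, \what{P}_{\bm k}(\cdot; \bm \mu_n) \ra$ is nothing but $\Ex_{\by \sim \PP_n} \what{P}_{\bm k}(\by; \bm \mu_n)$, and in both models $\PP_n$ has coordinates that are conditionally independent given the spike $\bx \sim \sP_n$; the only difference is the conditional law of $y_i$ given $x_i$, which is $\widetilde{\rho}_{x_i}$ in the kin-spiked case and the law of $x_i + z_i$ with $z_i \sim \widetilde{\rho}_{\mu_{n,i}}$ in the additively-spiked case.

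Concretely, I would first write
\[
\la L_n, \what{P}_{\bm k}(\cdot; \bm \mu_n) \ra = \Ex_{\by \sim \QQ_n}\!\left[L_n(\by)\what{P}_{\bm k}(\by; \bm \mu_n)\right] = \Ex_{\by \sim \PP_n}\!\left[\what{P}_{\bm k}(\by; \bm \mu_n)\right]
\]
by the defining property of the likelihood ratio. Next, I would unwrap the definition of $\PP_n$ in the additively-spiked model (Definition~\ref{def:additively-spiked}) by conditioning on $\bx \sim \sP_n$ and using the product structure $\what{P}_{\bm k}(\by; \bm \mu_n) = \prod_{i=1}^N \what{p}_{k_i}(y_i; \mu_{n,i})$ together with the conditional independence of the $y_i = x_i + z_i$ given $\bx$ to factorize:
\[
\Ex_{\by \sim \PP_n}\!\left[\what{P}_{\bm k}(\by; \bm \mu_n)\right] = \Ex_{\bx \sim \sP_n}\!\left[\prod_{i=1}^N \Ex_{z_i \sim \widetilde{\rho}_{\mu_{n,i}}}\!\left[\what{p}_{k_i}(x_i + z_i; \mu_{n,i})\right]\right].
\]
Finally, I would apply Proposition~\ref{prop:additively-spiked-expectation} to each inner expectation, yielding $\widehat{\tau}_{k_i}(x_i; \mu_{n,i})$, which gives the claimed formula.

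There is essentially no obstacle here beyond bookkeeping: the only nontrivial content is already packaged into Proposition~\ref{prop:additively-spiked-expectation}, whose proof uses the addition formula \eqref{eq:addition-formula} and orthogonality to collapse the sum to the $\ell=0$ term. If anything, the one place to be careful is in verifying that normalization conventions match, i.e., that writing $\what{P}_{\bm k}$ in terms of the normalized $\what{p}_{k_i}$ produces precisely the normalized translation polynomials $\widehat{\tau}_{k_i}$ on the right-hand side; this is immediate from Definition~\ref{def:translation-polynomials}, which uses the same normalizing constants $V(\mu)^{k/2}\sqrt{a_k(v_2)}$ as $\what{p}_k$, so the normalization commutes through the identity of Proposition~\ref{prop:additively-spiked-expectation}.
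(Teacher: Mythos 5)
Your proposal is correct and matches the paper's approach exactly: the paper itself states that this lemma follows "following the same argument for the additively-spiked model and using Proposition~\ref{prop:additively-spiked-expectation} instead of Corollary~\ref{cor:mismatched-mean}," which is precisely the substitution you carry out. Your expanded write-up and the remark on normalization conventions are both accurate.
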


\subsection{Full likelihood ratio norm}
\label{sec:full-norm}

First, we give an exact formula for the norm of the untruncated likelihood ratio in a kin-spiked NEF-QVF model.
\begin{theorem}
    \label{thm:full-lr}
    In the kin-spiked NEF-QVF model, for all $n \in \NN$,
    \begin{equation}
        \|L_n\|^2 = \Ex_{\bx^1, \bx^2 \sim \mathcal{P}_n}\left[ \prod_{i = 1}^N f(z_{\mu_{n, i}}(x_i^1)z_{\mu_{n, i}}(x^2_i); v_2) \right].
    \end{equation}
\end{theorem}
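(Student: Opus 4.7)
The plan is to compute $\|L_n\|^2$ by Parseval's identity in the orthonormal basis $\{\what{P}_{\bm k}(\cdot;\bm\mu_n)\}_{\bm k \in \NN^N}$ of $L^2(\QQ_n)$, plug in the exact formula for the coefficients from Lemma~\ref{lem:Ln-components-kin}, and then recognize the resulting power series as the function $f(\cdot; v_2)$ from Definition~\ref{def:f}. Specifically, Parseval gives
\begin{equation*}
\|L_n\|^2 \;=\; \sum_{\bm k \in \NN^N} \la L_n, \what{P}_{\bm k}(\cdot;\bm\mu_n)\ra^2,
\end{equation*}
and Lemma~\ref{lem:Ln-components-kin} turns each squared coefficient into a product of expectations over two independent copies $\bx^1,\bx^2 \sim \sP_n$. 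Using that independence to fuse the two expectations and, if justified, swapping the sum with the expectation, we obtain
\begin{equation*}
\|L_n\|^2 \;=\; \Ex_{\bx^1,\bx^2 \sim \sP_n}\!\left[\prod_{i=1}^N \sum_{k \geq 0} \frac{\what{a}_k(v_2)}{k!}\bigl(z_{\mu_{n,i}}(x_i^1)\,z_{\mu_{n,i}}(x_i^2)\bigr)^k\right],
\end{equation*}
where the factorization across $i$ is immediate because the summand over $\bm k$ is a product over coordinates.

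The next step is to identify, for each fixed scalar $t \in \RR$,
\begin{equation*}
g(t; v) \;\colonequals\; \sum_{k \geq 0} \frac{\what{a}_k(v)}{k!}\,t^k \;=\; f(t; v).
\end{equation*}
For $v = 0$ we have $\what{a}_k(0) = 1$ and the series is $e^t$. For $v \neq 0$, writing $\what{a}_k(v) = v^k \prod_{j=0}^{k-1}(1/v + j)$ and applying Newton's generalized binomial series $(1 - u)^{-\alpha} = \sum_{k \geq 0} \binom{\alpha+k-1}{k} u^k$ with $\alpha = 1/v$ and $u = vt$ yields exactly $(1 - vt)^{-1/v}$ whenever $|vt| < 1$. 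When $v < 0$ (the binomial case), $v = -1/m$ forces $\what{a}_k(v) = 0$ for $k > m$, so the series is actually a finite polynomial and converges trivially. When $v > 0$ and $t \geq 1/v$ the series diverges to $+\infty$, matching Definition~\ref{def:f}.

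The main obstacle is justifying the interchange of the sum over $\bm k$ and the expectation over $(\bx^1,\bx^2)$, since the summands are not sign-definite in general. I would handle the three cases separately. If $v_2 < 0$, there are only finitely many nonzero terms (at most $(m+1)^N$ of them), so Fubini applies with no issue. If $v_2 = 0$ (Gaussian/Poisson), the coefficients $\what{a}_k(0)/k! = 1/k!$ are nonnegative, and since $\sum_k |\la L_n, \what P_{\bm k}\ra|^2$ converges a priori (or is $+\infty$) to $\|L_n\|^2$, we may apply Tonelli to the nonnegative integrand $\frac{\what a_k(v_2)}{k!}(z_{\mu_{n,i}}(x^1_i)z_{\mu_{n,i}}(x^2_i))^k$ after pairing each odd-power term with its absolute value via Cauchy--Schwarz --- more cleanly, Parseval directly equates $\|L_n\|^2$ with the nonnegative sum of squared coefficients, and Tonelli then lets us pull the expectation inside. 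If $v_2 > 0$, the same Tonelli argument works once we note that the coefficients $\what{a}_k(v_2)/k!$ are again nonnegative, and that on the event where any scalar overlap $z_{\mu_{n,i}}(x_i^1)z_{\mu_{n,i}}(x_i^2) \geq 1/v_2$ the inner sum is $+\infty$, which is consistent with $f$ taking value $+\infty$ there. In all cases, the exchange yields the claimed product formula.
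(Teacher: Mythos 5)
Your argument follows the paper's proof essentially step-for-step: Parseval in the orthonormal basis $\{\what P_{\bm k}\}$, the coefficient formula from Lemma~\ref{lem:Ln-components-kin}, a two-replica expansion, and identification of the coordinatewise power series with $f(\cdot; v_2)$. One superficial difference: you prove the generating-function identity $\sum_k \what a_k(v)t^k/k! = f(t;v)$ via Newton's generalized binomial theorem, whereas the paper's Proposition~\ref{prop:f-gen-fun} derives it by solving the ODE $\partial_t f = f + vt\,\partial_t f$; both are equally short and valid.

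Your attempt to justify the sum/expectation interchange goes beyond what the paper writes (the paper glosses over it), but the Tonelli argument you give for $v_2 \geq 0$ does not quite hold up. The pointwise summand $\frac{\prod_i \what a_{k_i}(v_2)}{\prod_i k_i!}\prod_i\bigl(z_{\mu_{n,i}}(x^1_i)z_{\mu_{n,i}}(x^2_i)\bigr)^{k_i}$ is \emph{not} sign-definite in $(\bx^1,\bx^2)$; only its expectation is nonnegative, being a squared Parseval coefficient. Tonelli therefore does not apply directly, and ``pairing odd-power terms with their absolute values'' changes the value of the sum rather than merely controlling it. A clean way to make the interchange rigorous is to swap earlier, before expanding in orthogonal polynomials: write
\begin{equation*}
\|L_n\|^2 \;=\; \Ex_{\by \sim \QQ_n}\Ex_{\bx^1,\bx^2 \sim \sP_n}\left[\prod_{i=1}^N \frac{d\widetilde{\rho}_{x^1_i}}{d\widetilde{\rho}_{\mu_{n,i}}}(y_i)\,\frac{d\widetilde{\rho}_{x^2_i}}{d\widetilde{\rho}_{\mu_{n,i}}}(y_i)\right],
\end{equation*}
apply Tonelli to this manifestly nonnegative integrand to exchange $\by$ with $(\bx^1,\bx^2)$, and only then evaluate the remaining one-dimensional $y_i$-integrals using the univariate generating-function identity, which yields $\prod_i f\bigl(z_{\mu_{n,i}}(x^1_i)z_{\mu_{n,i}}(x^2_i); v_2\bigr)$ directly.
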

The key technical step is to recognize that the function $f(\cdot; v)$ from Definition~\ref{def:f} is in fact the exponential generating function of the $\what{a}_k(v)$, as follows.
\begin{proposition}
    \label{prop:f-gen-fun}
    For all $t \in \RR$ and $v \in \mathcal{V}$,
    \begin{equation}
        f(t; v) = \sum_{k = 0}^\infty \frac{\what{a}_k(v)}{k!} t^k.
    \end{equation}
\end{proposition}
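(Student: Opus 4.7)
The plan is to verify the identity case-by-case according to the three branches of Definition~\ref{def:f}, each reducing to a standard generating-function identity once $\widehat{a}_k(v)$ is rewritten as a rescaled rising factorial. The case $v = 0$ is immediate: every factor $1 + vj$ equals $1$, so $\widehat{a}_k(0) = 1$ for every $k$ and the right-hand side is the Taylor series $\sum_{k \geq 0} t^k/k! = e^t = f(t; 0)$.

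For $v \neq 0$, I would pull a factor of $v$ out of each term of the product,
\[ \widehat{a}_k(v) = \prod_{j=0}^{k-1}(1 + vj) = v^k \prod_{j=0}^{k-1}\!\left(\tfrac{1}{v} + j\right), \]
so that $\widehat{a}_k(v)/k!$ becomes the generalized binomial coefficient $v^k \binom{1/v + k - 1}{k}$. The series then reads $\sum_{k \geq 0} \binom{1/v + k - 1}{k}(vt)^k$, which is the Newton generalized binomial expansion of $(1 - vt)^{-1/v}$ in the variable $vt$ and hence equals $f(t; v)$. In the sub-case $v = -1/m$ with $m \in \ZZ_{\geq 1}$, the factor $1 + vj = 1 - j/m$ vanishes at $j = m$, so $\widehat{a}_k(v) = 0$ for every $k > m$ and the series truncates; the surviving terms are precisely the ordinary binomial coefficients $\binom{m}{k}(1/m)^k$, which sum by the finite binomial theorem to $(1 + t/m)^m = (1 - vt)^{-1/v}$, valid for every $t \in \RR$.

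The only point requiring additional care is convergence in the case $v > 0$. The ratio $\widehat{a}_{k+1}(v) / ((k+1)\,\widehat{a}_k(v)) = (1 + vk)/(k+1) \to v$ shows that the power series has radius of convergence exactly $1/v$, consistent with the finite-value domain $\{t < 1/v\}$ of $f(t; v)$ on the interval $|t| < 1/v$; at $t = 1/v$ both sides agree as $+\infty$ since the coefficient asymptotics $\widehat{a}_k(v)/k! \sim k^{1/v - 1}/\Gamma(1/v)$ force divergence of the series of positive terms. Beyond this bookkeeping I anticipate no real obstacle: once the algebraic identification of $\widehat{a}_k(v)$ with a rescaled rising factorial is in hand, the proposition follows in each case from a direct invocation of the classical binomial series.
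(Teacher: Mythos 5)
Your proof is correct, but it takes a genuinely different route from the paper's. The paper differentiates the power series $\sum_{k\geq 0}\frac{\what{a}_k(v)}{k!}t^k$ termwise and uses the recurrence $\what{a}_{k+1}(v) = (1+vk)\,\what{a}_k(v)$ to obtain the first-order linear ODE $\frac{\partial}{\partial t}f = f + vt\frac{\partial}{\partial t}f$ with $f(0)=1$, which it then solves; this handles all $v \in \mathcal{V}$ in one uniform stroke with no case split, since the same ODE integrates to $e^t$ when $v=0$ and to $(1-vt)^{-1/v}$ when $v\neq 0$. You instead recognize $\what{a}_k(v)/k! = v^k\binom{1/v + k - 1}{k}$ as a rescaled rising factorial and invoke the classical Newton binomial series directly, splitting into the three branches of Definition~\ref{def:f}. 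Your approach is somewhat more hands-on and makes the connection to the binomial theorem fully explicit (including the pleasant observation that the $v=-1/m$ case truncates to a finite binomial sum), at the cost of a case analysis; the paper's ODE approach is terser and case-free. One small slip in your side remark on divergence at $t=1/v$: the asymptotic $\what{a}_k(v)/k!\sim k^{1/v-1}/\Gamma(1/v)$ should carry the extra factor $v^k$ (it is $\binom{1/v+k-1}{k}\sim k^{1/v-1}/\Gamma(1/v)$ that has no such factor), though since you evaluate at $t=1/v$ the $v^k t^k$ cancels and your conclusion about divergence stands.
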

\begin{proof}
    Differentiating the power series termwise and using the formula from Definition~\ref{def:a-consts} gives the differential equation
    \begin{equation}
        \frac{\partial}{\partial t} f(t; v) = f(t; v) + vt \frac{\partial}{\partial t} f(t; v),
    \end{equation}
    and the result follows upon solving the equation.
\end{proof}

\begin{proof}[Proof of Theorem~\ref{thm:full-lr}]
    We have by Lemma~\ref{lem:Ln-components-kin}
    \begin{align}
      \|L_n\|^2
      &= \sum_{\bm k \in \NN^N} \la L_n, \what{P}_{\bm k}(\cdot; \bm \mu_n) \ra^2 \nonumber \\
      &= \sum_{\bm k \in \NN^N}\frac{\prod_{i = 1}^N \what{a}_{k_i}(v_2)}{\prod_{i = 1}^N k_i!}\left(\Ex_{\bm x \sim \sP_n} \left[\prod_{i = 1}^N z_{\mu_{n, i}}(x_i)^{k_i}\right]\right)^2 \nonumber \\
      &= \Ex_{\bx^1, \bx^2 \sim \sP_n}\left[\sum_{\bk \in \NN^N}\prod_{i = 1}^N\left\{\frac{ \what{a}_{k_i}(v_2)}{k_i!} (z_{\mu_{n, i}}(x^1_i) z_{\mu_{n, i}}(x^2_i))^{k_i}\right\}\right] \nonumber \\
      &= \Ex_{\bx^1, \bx^2 \sim \sP_n} \left[\prod_{i = 1}^N \left\{\sum_{k = 0}^\infty \frac{\what{a}_k(v_2)}{k!}(z_{\mu_{n,i}}(x^1_i) z_{\mu_{n, i}}(x^2_i))^k\right\}\right],
    \end{align}
    and the result follows from Proposition~\ref{prop:f-gen-fun}.
\end{proof}

\subsection{Low-degree likelihood ratio norm: Proof of Theorem~\ref{thm:trunc-lr}}
\label{sec:trunc-norm}

For this result, we use two more ancillary facts about the constants $\what{a}_k(\cdot)$.

\begin{proposition}[Monotonicity]
    \label{prop:a-mon}
    For $k \in \NN$, $\what{a}_k(v)$ is non-negative and monotonically non-decreasing in $v$ over $v \in \mathcal{V}$.
\end{proposition}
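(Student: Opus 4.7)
The plan is to exploit the product structure $\what{a}_k(v) = \prod_{j=0}^{k-1}(1+vj)$ and to split $\mathcal{V} = [0,\infty) \cup \{-1/m : m \in \ZZ_{\geq 1}\}$ into its two pieces, prove the statement on each piece by a factor-wise argument, and then glue the pieces together at $v = 0$. Since $\what{a}_0(v) \equiv 1$, I can assume $k \geq 1$ throughout.

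On the ray $[0,\infty)$ both claims are immediate: each factor $1+vj$ is non-negative and non-decreasing in $v$, so the product inherits both properties. The substantive case is the discrete tail. Using that the $j=0$ factor equals $1$, I rewrite $\what{a}_k(-1/m) = \prod_{j=1}^{k-1}(m-j)/m$. If $k \geq m+1$, the factor at $j = m$ vanishes and $\what{a}_k(-1/m) = 0$; if $k \leq m$, every factor lies in $(0,1]$, so $\what{a}_k(-1/m) \in [0,1]$. This settles non-negativity on all of $\mathcal{V}$.

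For monotonicity within the tail, observe that the negative points are ordered as $-1 < -1/2 < -1/3 < \cdots < 0$, so I must show $\what{a}_k(-1/m)$ is non-decreasing in $m$. I would split on whether $m < k$ or $m \geq k$: in the former regime the value is $0$ and can only increase when $m$ grows; once $m \geq k$, every factor $(1 - j/m)$ with $1 \leq j \leq k-1$ is strictly positive and individually non-decreasing in $m$, and a product of non-negative non-decreasing quantities is non-decreasing. Finally, to bridge the negative part to $[0,\infty)$, I note $\what{a}_k(-1/m) \leq 1 = \what{a}_k(0)$ for every $m \geq 1$, which patches monotonicity across the gap.

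There is no real obstacle; the only thing to watch out for is that $\mathcal{V}$ is disconnected, so "monotone in $v$" must be interpreted pointwise across the two pieces, and the factor-wise comparison handles exactly this. Everything else is a one-line consequence of the explicit product formula for $\what{a}_k(v)$.
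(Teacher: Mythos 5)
Your proof is correct and follows essentially the same route as the paper's: both exploit the explicit product formula $\what{a}_k(v) = \prod_{j=0}^{k-1}(1+vj)$, handle $v \geq 0$ by factor-wise monotonicity, observe that $\what{a}_k(-1/m) = 0$ once $k \geq m+1$ and otherwise lies in $[0,1]$ with each factor non-decreasing in $v$, and conclude. Your write-up is somewhat more explicit about the transition between the regimes $m < k$ and $m \geq k$ and about bridging the negative tail to $v = 0$, but these are presentational refinements of the same argument rather than a different method.
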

\begin{proof}
    Recall from \eqref{eq:what-a} that, by definition,
    \begin{equation}
        \what{a}_k(v) = \prod_{j = 0}^{k - 1}(1 + vj).
    \end{equation}
    Thus clearly $\what{a}_k(v)$ is monotonically non-decreasing over $v \geq 0$, since each factor is monotonically non-decreasing.

    If $v \in \mathcal{V}$ with $v < 0$, then $v = -\frac{1}{m}$ for some $m \in \ZZ_{\geq 1}$.
    Thus for $k \geq m + 1$, $\what{a}_k(v) = 0$.
    So, in this case we may rewrite
    \begin{equation}
        \what{a}_k(v) = \One\{k \leq m\} \prod_{j = 0}^{\min\{k - 1, m - 1\}}(1 + vj).
    \end{equation}
    Now, each factor belongs to $[0, 1)$, and again each factor is monotonically non-decreasing with $v$, so the result follows.
\end{proof}

\begin{proposition}[Multiplicativity relations]
    \label{prop:a-mul}
    For all $\bm k \in \NN^N$,
    \[ \begin{array}{ll}
         \prod_{i = 1}^N \what{a}_{k_i}(v) \leq \what{a}_{\sum_{i = 1}^N k_i}(v) & \text{if } v > 0, \\
         \prod_{i = 1}^N \what{a}_{k_i}(v) = \what{a}_{\sum_{i = 1}^N k_i}(v) & \text{if } v = 0, \\
         \prod_{i = 1}^N \what{a}_{k_i}(v) \geq \what{a}_{\sum_{i = 1}^N k_i}(v) & \text{if } v < 0.
       \end{array}
   \]
\end{proposition}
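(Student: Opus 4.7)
The plan is to expand both $\prod_{i=1}^N \what{a}_{k_i}(v)$ and $\what{a}_{\sum_i k_i}(v)$ explicitly as products of the elementary factors $(1+vj)$, and then compare them factor-by-factor after sorting. By definition, the LHS is a product over the multiset of $j$-indices $M_L \colonequals \bigcup_{i=1}^N \{0, 1, \ldots, k_i - 1\}$, while the RHS is a product over the set $M_R \colonequals \{0, 1, \ldots, (\sum_i k_i) - 1\}$. Both multisets have the same cardinality $\sum_i k_i$, so the comparison reduces to pairing the factors according to sorted order.

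The key structural step I would prove is that, after sorting $M_L$ in non-decreasing order as $j_1^L \leq j_2^L \leq \cdots$, one has $j_t^L \leq j_t^R = t - 1$ for every $t$. This follows by showing that, for every threshold $s \geq 0$, the number of elements of $M_L$ that are $\leq s$, namely $\sum_i \min(k_i, s+1)$, is at least the number of elements of $M_R$ that are $\leq s$, namely $\min(\sum_i k_i, s+1)$. This is simply the superadditivity inequality $\sum_i \min(k_i, c) \geq \min(\sum_i k_i, c)$ for $c \geq 0$, which I would verify by a two-term case check and induction on $N$.

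With pointwise domination of sorted indices in hand, the three cases of the proposition fall out once signs are tracked. For $v = 0$, every factor equals $1$, yielding equality. For $v > 0$, $(1+vj)$ is strictly positive and monotonically increasing in $j$, so each LHS factor is at most the corresponding RHS factor, and since all factors are positive, the product inequality follows in the same direction. For $v < 0$, I would write $v = -1/m$ and split into cases: if some $k_i > m$, both sides vanish (the LHS because $\what{a}_{k_i}(v) = 0$, the RHS because $\sum_i k_i > m$ forces the factor at $j = m$ to vanish); if all $k_i \leq m$ but $\sum_i k_i > m$, the RHS contains the zero factor at $j = m$ while the LHS is non-negative by Proposition~\ref{prop:a-mon}, so the stated inequality holds trivially; and in the non-degenerate case $\sum_i k_i \leq m$, all factors $(1+vj)$ are strictly positive and $(1+vj)$ is decreasing in $j$, so pointwise domination reverses to give each LHS factor at least the corresponding RHS factor, yielding $\prod_i \what{a}_{k_i}(v) \geq \what{a}_{\sum_i k_i}(v)$.

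The main obstacle will be the sign bookkeeping in the $v < 0$ regime: the clean sorted-factor argument only applies once all factors are strictly positive, so the degenerate cases where $\what{a}_k(v) = 0$ must be peeled off first. An alternative induction on $N$, reducing to the two-term ratio $\what{a}_{a+b}(v)/(\what{a}_a(v)\what{a}_b(v)) = \prod_{j=0}^{b-1}(1 + v(j+a))/(1+vj)$ when the denominators are nonzero, would sidestep the multiset formalism but demand the same care with vanishing factors; I would prefer the sorted-multiset route since it makes the $v > 0$ and $v < 0$ cases perfectly parallel up to the direction of the monotonicity of $(1+vj)$.
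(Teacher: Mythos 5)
Your proof is correct but pairs the elementary factors differently than the paper does. The paper's argument is more direct: it matches the $j$-th factor (index $j$) of the $i$-th block with the factor at index $\bigl(\sum_{a<i}k_a\bigr) + j$, i.e., it slides the index range of each successive block so that the shifted ranges partition $\{0,\ldots,\sum_i k_i - 1\}$. Since the shift $\sum_{a<i}k_a$ is nonnegative, each original index is dominated by its partner, and the conclusion follows at once from monotonicity of $(1+vj)$ in $j$ --- no sorting and no counting lemma are needed. Your sorted-multiset pairing requires the additional step $\sum_i\min(k_i,c)\geq\min\bigl(\sum_i k_i,c\bigr)$, so it is somewhat heavier, but it buys a pairing that is canonical (independent of the ordering of the blocks) and it makes the parallelism between the $v>0$ and $v<0$ cases transparent. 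You are also more careful than the paper on the $v<0$ case: the paper dismisses it with a one-line appeal to a ``symmetric argument'' plus Proposition~\ref{prop:a-mon}, whereas you explicitly peel off the degenerate subcases where a factor vanishes (some $k_i>m$, or all $k_i\le m$ but $\sum_i k_i>m$) before invoking the monotone comparison on strictly positive factors. That sign bookkeeping is exactly what a careful reader must supply to complete the paper's proof, so your treatment is a genuine improvement in completeness even though the core mechanism is the same.
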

\begin{proof}
    When $v = 0$, then $\what{a}_k(v) = 1$ for all $k$, so the result follows immediately.
    When $v > 0$, we have
    \begin{align}
      \prod_{i = 1}^N \what{a}_{k_i}(v)
      &= \prod_{i = 1}^N \prod_{j = 0}^{k_i - 1}(1 + vj) \nonumber \\
      &\leq \prod_{i = 1}^N \prod_{j = \sum_{a = 1}^{i - 1}k_a}^{\sum_{a = 1}^{i}k_a}(1 + vj) \nonumber \\
      &= \prod_{j = 1}^{\sum_{i = 1}^N k_i} (1 + vj) \nonumber \\
      &= \what{a}_{\sum_{i = 1}^N k_i}(v).
    \end{align}
    When $v < 0$, a symmetric argument together with the observations from Proposition~\ref{prop:a-mon} gives the result.
\end{proof}

\begin{proof}[Proof of Theorem~\ref{thm:trunc-lr}]
    Suppose first that $v_2 \geq 0$.
    We have by Lemma~\ref{lem:Ln-components-kin}
    \begin{align}
      \|L_n^{\leq D}\|^2
      &= \sum_{\substack{\bm k \in \NN^N \\ |\bm k| \leq D}} \la L_n, \what{P}_{\bm k}(\cdot; \bm \mu_n) \ra^2 \nonumber \\
      &= \sum_{\substack{\bm k \in \NN^N \\ |\bm k| \leq D}}\frac{\prod_{i = 1}^N \what{a}_{k_i}(v_2)}{\prod_{i = 1}^N k_i!}\left(\EE_{\bx \sim \sP_n} \left[\prod_{i = 1}^N z_{\mu_{n, i}}(x_i)^{k_i}\right]\right)^2 \nonumber \\
      &= \Ex_{\bx^1, \bx^2 \sim \sP_n}\left[\sum_{\substack{\bm k \in \NN^N \\ |\bm k| \leq D}}\frac{ \prod_{i = 1}^N \what{a}_{k_i}(v_2)}{\prod_{i = 1}^N k_i!} \prod_{i = 1}^N(z_{\mu_{n, i}}(x^1_i) z_{\mu_{n, i}}(x^2_i))^{k_i}\right], \nonumber
      \intertext{and using Proposition~\ref{prop:a-mul},}
      &\leq \Ex_{\bx^1, \bx^2 \sim \sP_n}\left[\sum_{\substack{\bm k \in \NN^N \\ |\bm k| \leq D}}\frac{ \what{a}_{|\bm k|}(v_2)}{\prod_{i = 1}^N k_i!} \prod_{i = 1}^N(z_{\mu_{n, i}}(x^1_i) z_{\mu_{n, i}}(x^2_i))^{k_i}\right] \nonumber \\
      &= \Ex_{\bx^1, \bx^2 \sim \sP_n}\left[\sum_{d = 0}^D\frac{\what{a}_d(v_2)}{d!}\sum_{\substack{\bm k \in \NN^N \\ |\bm k| = d}}
      \binom{d}{k_1 \cdots k_N} \prod_{i = 1}^N(z_{\mu_{n, i}}(x^1_i) z_{\mu_{n, i}}(x^2_i))^{k_i}\right] \nonumber \\
      &= \Ex_{\bx^1, \bx^2 \sim \sP_n}\left[\sum_{d = 0}^D\frac{\what{a}_d(v_2)}{d!}
        \left(\sum_{i = 1}^Nz_{\mu_{n, i}}(x^1_i) z_{\mu_{n, i}}(x^2_i)\right)^d\right],
    \end{align}
    giving the upper bound from \eqref{eq:trunc-lr-pos} for $v_2 > 0$.
    When $v_2 = 0$, then equality holds above, so we obtain equality in \eqref{eq:trunc-lr-pos}.
    Also, when $v_2 < 0$, then the above argument holds with the inequality reversed, giving the lower bound of \eqref{eq:trunc-lr-neg}.

    Finally, for the upper bound of \eqref{eq:trunc-lr-neg}, note that when $v_2 < 0$, we may bound $\|L_n^{\leq D}\|^2$ using Proposition~\ref{prop:a-mon} and the result for $v_2 = 0$ by
    \begin{align}
      \|L_n^{\leq D}\|^2
      &= \sum_{\substack{\bm k \in \NN^N \\ |\bm k| \leq D}}\frac{\prod_{i = 1}^N \what{a}_{k_i}(v_2)}{\prod_{i = 1}^N k_i!}\left(\EE_{\bx \sim \sP_n} \left[\prod_{i = 1}^N z_{\mu_{n, i}}(x_i)^{k_i}\right]\right)^2 \nonumber \\
      &\leq \sum_{\substack{\bm k \in \NN^N \\ |\bm k| \leq D}}\frac{\prod_{i = 1}^N \what{a}_{k_i}(0)}{\prod_{i = 1}^N k_i!}\left(\EE_{\bx \sim \sP_n} \left[\prod_{i = 1}^N z_{\mu_{n, i}}(x_i)^{k_i}\right]\right)^2 \nonumber \\
      &= \Ex_{\bx^1, \bx^2} f^{\leq D}(r_n; 0),
  \end{align}
  giving the result.
\end{proof}

\section{Applications}

\subsection{Example: Kesten-Stigum on the back of an envelope}
\label{sec:kesten-stigum}

Let us show how to use Theorem~\ref{thm:trunc-lr} to predict a computational threshold in the symmetric stochastic block model with two communities (see, e.g., \cite{Abbe-2017-SBMReview,Moore-2017-SBMReview} for surveys of this model).
In this model, $N(n) = \binom{n}{2}$, and indexing in $N$ is identified with pairs $\{i, j\} \in \binom{[n]}{2}$, which represent edges in a graph.
There are two external parameters, $a, b > 0$.
The model belongs to the Bernoulli NEF-QVF, with $v_2 = -1$, and the means in the null and planted models are as follows:
\begin{itemize}
\item Under $\QQ_n$, $\mu_{n, \{i, j\}} = \frac{a + b}{2n}$, with variances $V(\mu_{n, \{i, j\}}) = \mu_{n, \{i, j\}}(1 - \mu_{n, \{i, j\}}) \approx \mu_{n, \{i, j\}}$.
\item Under $\PP_n$, we generate $\bm\sigma \in \{\pm 1\}^N$ either i.i.d.\ uniformly or conditioned to have equal numbers of plus and minus coordinates (this does not make a significant difference), and set $x_{\{i, j\}} = \frac{a}{n}$ if $\sigma_i = \sigma_j$ and $x_{\{i, j\}} = \frac{b}{n}$ if $\sigma_i \neq \sigma_j$.
    We may summarize this as
    \begin{equation}
        x_{\{i, j\}} = \frac{a + b}{2n} + \frac{a - b}{2n}\sigma_i \sigma_j.
    \end{equation}
\end{itemize}
We first compute the $z$-scores:
\begin{equation}
    z_{\mu_{n, \{i, j\}}}(x_{\{i, j\}}) = \frac{\left(\frac{a + b}{2n} + \frac{a - b}{2n}\sigma_i \sigma_j\right) - \frac{a + b}{2n}}{\sqrt{\frac{a + b}{2n}}} = \frac{1}{\sqrt{2n}} \cdot \frac{a - b}{\sqrt{a + b}}\cdot \sigma_i \sigma_j.
\end{equation}
As a shorthand, let us write $\bz(\bx)$ for the coordinatewise application of this function.
We then compute the inner product of the $z$-scores of two independent copies of $\bx$, which may be expressed in terms of two copies of $\bm\sigma$:
\begin{equation}
    r_n \colonequals \langle \bz(\bx^1), \bz(\bx^2) \rangle = \frac{1}{2n} \frac{(a - b)^2}{a + b}\sum_{1 \leq i < j \leq n}\sigma_i^1 \sigma_i^2 \sigma_j^1\sigma_j^2 = \frac{(a - b)^2}{4(a + b)} \cdot \frac{ \la \bm\sigma^1, \bm\sigma^2 \ra^2 - n}{n}.
\end{equation}
By our heuristic based on Theorem~\ref{thm:trunc-lr}, we then expect
\begin{equation}
    \|L_n^{\leq D}\|^2 \approx \Ex_{\bm\sigma^1, \bm\sigma^2}\left[\exp^{\leq D}\left(\frac{(a - b)^2}{4(a + b)} \cdot \left(\frac{\la \bm\sigma^1, \bm\sigma^2 \ra^2}{n} - 1\right)\right)\right].
\end{equation}
(Alternatively, if we work in the Poisson NEF instead of the Bernoulli NEF, then this will be an exact equality by Theorem~\ref{thm:trunc-lr}.)

We assume heuristically that $\la \bm\sigma^1, \bm\sigma^2 \ra / \sqrt{n}$ is distributed approximately as $\sN(0, 1)$, and that $D(n)$ grows slowly enough that only after this convergence does the convergence $\exp^{\leq D}(\cdot) \to \exp(\cdot)$ occur.
Thus, we expect
\begin{equation}
    \limsup_{n \to \infty} \|L_n^{\leq D(n)}\|^2 \lesssim \Ex_{g \sim \sN(0, 1)}\left[ \exp\left(\frac{(a - b)^2}{4(a + b)}(g^2 - 1)\right)\right],
\end{equation}
where the right-hand side evaluates the moment-generating function of a $\chi^2$ random variable, which is finite if and only if $\frac{(a - b)^2}{4(a + b)} < \frac{1}{2}$, or if and only if $(a - b)^2 < 2(a + b)$, which is the Kesten-Stigum threshold.

\subsection{Channel monotonicity: Proof of Theorem~\ref{thm:comparison}}

This result is a simple consequence of the arguments we have made already to prove Theorem~\ref{thm:trunc-lr}.
\begin{proof}[Proof of Theorem~\ref{thm:comparison}]
    We have by Lemma~\ref{lem:Ln-components-kin}
    \begin{align}
      \|(L_n^{(i)})^{\leq D}\|^2
      &= \sum_{\substack{\bm k \in \NN^N \\ |\bm k| \leq D}} \la L_n^{(i)}, \what{P}_{\bm k}(\cdot; \bm \mu_n) \ra^2 \nonumber \\
      &= \sum_{\substack{\bm k \in \NN^N \\ |\bm k| \leq D}}\frac{\prod_{j = 1}^N \what{a}_{k_j}(v_2^{(i)})}{\prod_{j = 1}^N k_j!}\left(\EE_{\bx \sim \sP_n} \left[\prod_{j = 1}^N z_{\mu_{n, j}}(x_j)^{k_j}\right]\right)^2.
    \end{align}
    In each term on the right-hand side, the only factor that depends on $i$ is $\prod_{j = 1}^N\what{a}_{k_j}(v_2^{(i)})$, so the result follows from the monotonicity described by Proposition~\ref{prop:a-mon}.
    (Indeed, this shows slightly more, that the monotonicity holds even for the norm of the projection of $L_n^{(i)}$ onto the orthogonal polynomial of any given index $\bk$.)
\end{proof}

\subsection{Hyperbolic secant spiked matrix model: Proof of Theorem~\ref{thm:spiked-mx}}

To prove this result, we will analyze the translation polynomials $\tau_k$, from Definition~\ref{def:translation-polynomials}, for the NEF generated by $\rho^{\sech}$.
First, note that the mean and variance of $\rho^{\sech}$ are $\mu = 0$ and $V(0) = 1$, and more generally the variance function in the generated NEF is $V(\mu) = \mu^2 + 1$ (per Table~\ref{tab:nef-qvf}), where in particular the quadratic coefficient is $v_2 = 1$.
Thus the associated normalizing constants are $a_k(v_2) = (k!)^2$ and $\widehat{a}_k(v_2) = k!$.

Recall that the translation polynomials admit a generating function expressed in terms of the cumulant generating function of $\rho^{\sech}$.
We therefore compute
\begin{align}
  \psi(\theta) &\colonequals \Ex_{y \sim \rho^{\sech}} \exp(\theta y) = \frac{1}{2}\int_{-\infty}^{\infty} \sech\left(\frac{\pi y}{2}\right) \exp(\theta y) \, dy = -\log(\cos \theta), \\
  \psi^{\prime}(\theta) &= \tan(\theta),
\end{align}
whereby the translation polynomials for $\mu = 0$ (the mean of $\rho^{\sech}$) have the generating function
\begin{equation}
    \sum_{k \geq 0} \frac{t^k}{k!} \tau_k(y; 0) = \sum_{k \geq 0} t^k\widehat{\tau}_k(y; 0) = \exp\left(y \tan^{-1}(t)\right). \label{eq:sech-translation-gf}
\end{equation}

Before proceeding, we also establish some preliminary bounds on the coefficients and values of these polynomials.
We denote by $[x^\ell](p(x))$ the coefficient of $x^{\ell}$ in a polynomial or formal power series $p(x)$.
\begin{proposition}
    \label{prop:tau-coeff-bound}
    For all $k \geq 1$ and $\ell \geq 0$,
    \begin{equation}
        |[x^{\ell}](\what{\tau}_k(x))| \leq \One\{k \equiv \ell \ppmod{2}, \ell > 0\} \frac{(2\log(ek))^{\ell - 1}}{k \, \ell!}.
    \end{equation}
\end{proposition}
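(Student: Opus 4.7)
The plan is to extract $[x^\ell]\widehat{\tau}_k(x)$ from the generating function \eqref{eq:sech-translation-gf} and then majorize it by a chain of coefficientwise dominations moving from the alternating series $\tan^{-1}$ to progressively simpler all-positive series. Expanding $\exp(x \tan^{-1}(t)) = \sum_{\ell \geq 0} x^\ell (\tan^{-1} t)^\ell / \ell!$ in \eqref{eq:sech-translation-gf} and matching coefficients of $x^\ell t^k$ gives
\[ [x^\ell]\widehat{\tau}_k(x) = \frac{1}{\ell!}[t^k](\tan^{-1} t)^\ell. \]
The parity constraint $k \equiv \ell \pmod 2$ is then immediate from $\tan^{-1}$ being an odd function, and the $\ell > 0$ requirement follows since $(\tan^{-1} t)^0 = 1$ contributes nothing to $[t^k]$ for $k \geq 1$. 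It therefore suffices to prove $|[t^k](\tan^{-1} t)^\ell| \leq (2\log(ek))^{\ell-1}/k$ when the parity condition holds.

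The coefficients of $\tan^{-1} t = \sum_{j \geq 0} (-1)^j t^{2j+1}/(2j+1)$ alternate in sign, while those of $\tanh^{-1} t = \sum_{j \geq 0} t^{2j+1}/(2j+1)$ have the same magnitudes but are all positive, so by the triangle inequality applied to the convolutional expansion of $(\tan^{-1} t)^\ell$, I get $|[t^k](\tan^{-1} t)^\ell| \leq [t^k](\tanh^{-1} t)^\ell$. Next, since $-\log(1-t) - \tanh^{-1} t = \sum_{n \geq 2,\, n \text{ even}} t^n/n$ has nonnegative coefficients and both $\tanh^{-1} t$ and $-\log(1-t)$ themselves have nonnegative coefficients, raising to the $\ell$-th power preserves the coefficientwise inequality, yielding $[t^k](\tanh^{-1} t)^\ell \leq [t^k](-\log(1-t))^\ell$.

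The last quantity is classical: the Taylor coefficients of $(-\log(1-t))^\ell$ are the unsigned Stirling numbers of the first kind $c(k,\ell)$, with $[t^k](-\log(1-t))^\ell = \ell! \cdot c(k,\ell)/k!$ and generating function $x(x+1)\cdots(x+k-1) = \sum_{\ell} c(k, \ell) x^\ell$. Factoring the left-hand side as $(k-1)! \cdot x \prod_{j=1}^{k-1}(1 + x/j)$ and matching coefficients gives the identity
\[ c(k, \ell) = (k-1)! \cdot e_{\ell-1}\!\left(1, \tfrac{1}{2}, \ldots, \tfrac{1}{k-1}\right), \]
where $e_{\ell-1}$ is the elementary symmetric polynomial of degree $\ell - 1$. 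The standard multinomial bound $(a_1 + \cdots + a_n)^m \geq m! \cdot e_m(a_1, \ldots, a_n)$ for $a_i \geq 0$ (obtained by retaining only squarefree terms in the multinomial expansion) then gives $e_{\ell-1}(1, \tfrac{1}{2}, \ldots, \tfrac{1}{k-1}) \leq H_{k-1}^{\ell-1} / (\ell - 1)!$, and hence
\[ |[t^k](\tan^{-1} t)^\ell| \leq \frac{\ell! \cdot c(k,\ell)}{k!} \leq \frac{\ell \, H_{k-1}^{\ell-1}}{k}. \]

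To conclude, I apply the elementary estimates $H_{k-1} \leq \log(ek)$ (from $H_{k-1} \leq 1 + \log(k-1)$ when $k \geq 2$, with $k = 1$ trivial since $H_0 = 0$) and $\ell \leq 2^{\ell-1}$ for $\ell \geq 1$ (by induction), which together yield $\ell H_{k-1}^{\ell-1} \leq 2^{\ell - 1} (\log(ek))^{\ell-1} = (2 \log(ek))^{\ell-1}$. Dividing by $\ell!$ gives the proposition. The only nonroutine step is identifying the domination chain $\tan^{-1} \to \tanh^{-1} \to -\log(1-t)$; the reason for the factor of $2$ appearing inside the logarithm in the statement is exactly that it is what is needed, combined with $\ell \leq 2^{\ell-1}$, to absorb the stray factor of $\ell$ produced by the symmetric polynomial bound on $e_{\ell - 1}$.
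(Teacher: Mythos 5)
Your proof is correct and reaches the same bound as the paper, but the combinatorial core is handled by a genuinely different argument. Both you and the paper begin identically: extract $[x^\ell]\what{\tau}_k(x) = \frac{1}{\ell!}[t^k](\tan^{-1} t)^\ell$, read off the parity constraint from oddness of $\tan^{-1}$, and replace the alternating series by an all-positive majorant. The paper stops at the quantity $c(k,\ell) = \sum_{a_1+\cdots+a_\ell=k,\ a_i\geq 1} \prod 1/a_i$ (i.e.\ the coefficient of $(-\log(1-t))^\ell$) and proves $c(k,\ell) \leq (2\log(ek))^{\ell-1}/k$ by a direct induction on $\ell$, expanding on the value of $a_\ell$ and using the partial-fraction trick $\frac{1}{a(k-a)} = \frac{1}{k}\bigl(\frac{1}{a}+\frac{1}{k-a}\bigr)$ together with $H_k \leq \log(ek)$. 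You instead pass through $-\log(1-t)$, recognize its powers as generating unsigned Stirling numbers of the first kind, factor the rising factorial to get $c(k,\ell)=(k-1)!\,e_{\ell-1}(1,\tfrac12,\dots,\tfrac1{k-1})$, and then invoke the multinomial inequality $e_m \leq (\sum a_i)^m/m!$. This yields the cleaner intermediate bound $\ell H_{k-1}^{\ell-1}/k$, and you then pay the factor of two by $\ell \leq 2^{\ell-1}$ rather than by the induction's doubling of $\log(ek)$. Your route is more structural (the Stirling-number identity is exact, and the only lossy step is a single standard symmetric-function inequality), which makes the source of the constant $2$ transparent; the paper's induction is more self-contained and avoids invoking any named combinatorial identities. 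Both approaches are sound.
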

\begin{proof}
    Expanding the generating function, we have
    \begin{equation}
        [x^{\ell}](\what{\tau}_k(x)) = [t^kx^{\ell}](\exp(x\tan^{-1}(t))) = \frac{1}{\ell!} [t^k]((\tan^{-1}(t))^{\ell}).
    \end{equation}
    If $k \geq 1$ and $\ell = 0$, then this is zero.
    Since the coefficients in the Taylor series of $\tanh^{-1}(t)$ are $[t^k](\tanh^{-1}(t)) = \One\{k \equiv 1 \ppmod{2}\} (-1)^{(k - 1) / 2} / k$, we may bound
    \begin{equation}
        |[x^{\ell}](\what{\tau}_k(x))| \leq \One\{k \equiv \ell \ppmod{2}, \ell > 0\} \frac{1}{\ell!} \underbrace{\sum_{\substack{a_1, \dots, a_{\ell} \geq 1 \\ a_1 + \cdots + a_{\ell} = k}} \frac{1}{\prod_{i = 1}^{\ell}a_i}}_{c(k, \ell)}.
    \end{equation}

    We now show that $c(k, \ell) \leq (2\log(ek))^{\ell - 1} / k$ by induction on $\ell$.
    Since $c(k, 1) = 1 / k$, the base case holds.
    We note the bound on harmonic numbers
    \begin{equation}
        \label{eq:harm-bound}
        \sum_{a = 1}^k \frac{1}{a} \leq \log(ek) \text{ for all } k \geq 1.
    \end{equation}
    Supposing the result holds for $c(k, \ell - 1)$, we expand $c(k, \ell)$ according to the value that $a_{\ell}$ takes:
    \begin{align}
      c(k, \ell)
      &\leq \sum_{a = 1}^{k - 1} \frac{1}{a}c(k - a, \ell - 1) \nonumber \\
      &\leq (2\log (ek))^{\ell - 2} \sum_{a = 1}^{k - 1} \frac{1}{a} \cdot \frac{1}{k - a} \tag{inductive hypothesis} \\
      &\leq \frac{(2\log (ek))^{\ell - 2}}{k}\sum_{a = 1}^{k - 1} \left(\frac{1}{a} + \frac{1}{k - a}\right) \nonumber \\
      &\leq \frac{(2\log (ek))^{\ell - 2}}{k} \cdot 2\log(ek), \tag{by \eqref{eq:harm-bound}}
    \end{align}
    completing the argument.
\end{proof}

This yields the following pointwise bound.
As we will ultimately be evaluating this on quantities of order $O(n^{-1/2})$, what is most important to us is the precision for very small arguments.
\begin{corollary}
    \label{cor:tau-value-bound}
    For all $k \geq 1$ and $x > 0$,
    \begin{equation}
        |\what{\tau}_k(x)| \leq \left\{\begin{array}{ll} x \cdot \frac{1}{k} \cdot (ek)^{2x} & \text{if } k \text{ odd}, \\ x^2 \cdot \frac{2\log(ek)}{k} \cdot (ek)^{2x} & \text{if } k \text{ even.}\end{array}\right.
    \end{equation}
\end{corollary}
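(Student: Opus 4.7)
\textbf{Proof proposal for Corollary~\ref{cor:tau-value-bound}.}

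The plan is to bound $|\widehat{\tau}_k(x)|$ by applying the triangle inequality to its power-series expansion and invoking the coefficient bound from Proposition~\ref{prop:tau-coeff-bound}, then recognize the resulting series as (truncations of) $\sinh$ or $\cosh-1$ in disguise. Concretely, fix $k \geq 1$ and $x > 0$, and abbreviate $u \colonequals 2\log(ek)$. Proposition~\ref{prop:tau-coeff-bound} together with the triangle inequality gives
\begin{equation*}
    |\widehat{\tau}_k(x)| \;\leq\; \sum_{\substack{\ell > 0 \\ \ell \equiv k \ppmod{2}}} \frac{u^{\ell - 1}}{k \, \ell!} \, x^{\ell} \;=\; \frac{1}{k u} \sum_{\substack{\ell > 0 \\ \ell \equiv k \ppmod{2}}} \frac{(ux)^{\ell}}{\ell!}.
\end{equation*}

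Next I would split on the parity of $k$. If $k$ is odd, the sum ranges over odd $\ell \geq 1$, so it equals $\sinh(ux)$; if $k$ is even, the sum ranges over even $\ell \geq 2$, so it equals $\cosh(ux) - 1$. The final step is to bound these in a way that pulls out a factor of $x$ (respectively $x^2$) that matches the target form. For this I would use the elementary inequalities
\begin{equation*}
    \sinh(t) \;\leq\; t \cosh(t) \;\leq\; t\, e^{t}, \qquad \cosh(t) - 1 \;\leq\; \tfrac{t^2}{2}\cosh(t) \;\leq\; \tfrac{t^2}{2}\, e^{t} \qquad (t \geq 0),
\end{equation*}
each of which is verified by comparing Taylor coefficients termwise. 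Substituting $t = ux$ and using $e^{ux} = (ek)^{2x}$, the odd case yields
\begin{equation*}
    |\widehat{\tau}_k(x)| \;\leq\; \frac{ux \cdot e^{ux}}{ku} \;=\; x \cdot \frac{1}{k} \cdot (ek)^{2x},
\end{equation*}
and the even case yields
\begin{equation*}
    |\widehat{\tau}_k(x)| \;\leq\; \frac{(ux)^2 e^{ux}}{2 k u} \;=\; x^2 \cdot \frac{\log(ek)}{k} \cdot (ek)^{2x} \;\leq\; x^2 \cdot \frac{2\log(ek)}{k} \cdot (ek)^{2x},
\end{equation*}
which are exactly the claimed bounds.

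There is no real obstacle beyond the mild bookkeeping of separating the two parities and choosing the right elementary inequalities for $\sinh$ and $\cosh - 1$; once those are in place, the bookkeeping is mechanical. The only subtle point is that extracting an honest factor of $x$ (or $x^2$) as opposed to merely $e^{ux}$ requires using the tighter inequalities $\sinh(t) \leq t\cosh(t)$ and $\cosh(t) - 1 \leq \tfrac{t^2}{2}\cosh(t)$ rather than the cruder $\sinh(t), \cosh(t) \leq e^{t}$; this is what makes the bound meaningful for the intended regime $x = O(n^{-1/2})$.
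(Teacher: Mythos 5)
Your proof is correct and takes essentially the same route as the paper: bound $|\widehat{\tau}_k(x)|$ termwise via Proposition~\ref{prop:tau-coeff-bound}, pull out the leading $x^{\ell_0}$ factor ($\ell_0 = 1$ or $2$ by parity), and bound the tail of the series by $e^{ux}$ with $u = 2\log(ek)$, giving $(ek)^{2x}$. The paper does this by the single observation $\frac{1}{\ell!} \leq \frac{1}{(\ell - \ell_0)!}$ after discarding the parity constraint on $\ell$, whereas you keep the parity, identify $\sinh$ and $\cosh - 1$, and use $\sinh(t) \leq t e^t$ and $\cosh(t) - 1 \leq \tfrac{t^2}{2} e^t$; both encode the same underlying estimate, and yours is even slightly tighter in the even case (by a factor of $2$) before you loosen to match the stated constant.
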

\begin{proof}
    Write $\ell_0 = 1$ if $k$ is odd and $\ell_0 = 2$ if $k$ is even.
    We bound by Proposition~\ref{prop:tau-coeff-bound},
    \begin{align}
      |\what{\tau}_k(x)|
      &\leq \frac{1}{k}\sum_{\ell = \ell_0}^k \frac{(2\log(ek))^{\ell - 1}}{\ell!} x^{\ell} \nonumber \\
      &\leq \frac{x^{\ell_0} (2\log(ek))^{\ell_0 - 1}}{k}\sum_{\ell = \ell_0}^k \frac{(2\log(ek) x)^{\ell - \ell_0}}{(\ell - \ell_0)!} \nonumber \\
      &\leq \frac{x^{\ell_0} (2\log(ek))^{\ell_0 - 1}}{k} \exp((2\log(ek) x),
    \end{align}
    and the result follows upon rearranging.
\end{proof}

We now proceed with the proof of the main result of this section.
\begin{proof}[Proof of Theorem~\ref{thm:spiked-mx}]
    First, applying Lemma~\ref{lem:Ln-components-additive} to the hyperbolic secant spiked matrix model, the coefficients of the likelihood ratio are given by, for any $\bk \in \NN^{\binom{[n]}{2}}$,
    \begin{equation}
    \langle L_n(\bY), \widehat{P}_{\bk}\rangle = \Ex_{\bX \sim \sP_n} \left[ \prod_{1 \leq i < j \leq n} \what{\tau}_{k_{\{i, j\}}}(X_{\{i, j\}}) \right] = \Ex_{\bx \sim \Unif(\{\pm 1\}^n)} \left[ \prod_{1 \leq i < j \leq n} \what{\tau}_{k_{\{i, j\}}}\left(\frac{\lambda}{\sqrt{n}} x_ix_j\right) \right].
    \end{equation}
    We proceed in several steps.

    \vspace{1em}

    \emph{Step 1: Simplifying Rademacher-valued prior.}
    Our specific choice of $\bx \in \{ \pm 1\}^n$ allows an interesting further simplification: thanks to this choice, we can decouple the dependence of the components of $L_n$ on $\lambda$ from the dependence on $\bx$.
    Note that, by the generating function identity~\eqref{eq:sech-translation-gf}, for all $k \geq 0$ we have that $\tau_k(x)$ contains only monomials of the same parity as $k$.
    Therefore, for all $\bk \in \NN^{\binom{[n]}{2}}$, we have
    \begin{equation}
        \langle L_n, \what{P}_{\bk} \rangle = \prod_{i < j} \what{\tau}_{k_{ij}}\left(\frac{\lambda}{\sqrt{n}}\right) \cdot \Ex_{\bx} \left[ \prod_{i < j} (x_ix_j)^{k_{ij}} \right].
    \end{equation}
    Here and in the remainder of the proof, we write $k_{ij} = k_{\{i, j\}}$ and $i < j$ for $1 \leq i < j \leq n$ to lighten the notation.
    Let us also write $|\bk|_{\infty} \colonequals \max_{i < j} k_{ij}$.

    \vspace{1em}

    \emph{Step 2: Bounds on translation polynomials.}
    We note that, since the second factor above is either 0 or 1, we may further bound
    \begin{align}
      |\langle L_n, \what{P}_{\bk} \rangle|
      &\leq \left|\prod_{i < j} \what{\tau}_{k_{ij}}\left(\frac{\lambda}{\sqrt{n}}\right)\right| \cdot \Ex_{\bx} \left[ \prod_{i < j} (x_ix_j)^{k_{ij}} \right] \nonumber
        \intertext{When $|\bk|_{\infty} \leq D$, then, by Corollary~\ref{cor:tau-value-bound}, we may continue}
      &\leq \prod_{\substack{i < j \\ k_{ij} > 0}}\frac{(eD)^{\frac{\lambda}{\sqrt{n}}}}{k_{ij}}(2\log(ek_{ij}))^{\One\{ k_{ij} \text{ even}\}}\left(\frac{\lambda}{\sqrt{n}}\right)^{1 + \One\{k_{ij} \text{ even}\}}\Ex_{\bx}\left[\prod_{i < j} (x_ix_j)^{k_{ij}}\right].
    \end{align}

    \vspace{1em}

    \emph{Step 3: The ``replica'' manipulation.}
    Squaring and rewriting this as an expectation over two independent $\bx^1, \bx^2 \sim \Unif(\{\pm 1\}^n)$, we find
    \begin{align}
      |\langle L_n, \what{P}_{\bk} \rangle|^2 &\leq \Ex_{\bx^1, \bx^2} \prod_{\substack{i < j \\ k_{ij} > 0}}\frac{(eD)^{2\frac{\lambda}{\sqrt{n}}}}{k_{ij}^2}(2\log(ek_{ij}))^{2\, \One\{ k_{ij} \text{ even}\}}\left(\frac{\lambda}{\sqrt{n}}\right)^{2(1 + \One\{k_{ij} \text{ even}\})} (x_i^1x_i^2x_j^1x_j^2)^{k_{ij}}.
    \end{align}
    Summing over $|\bk|_{\infty} \leq D$, we then find
    \begin{align}
      &\sum_{\substack{\bk \in \NN^N \\ |\bk|_{\infty} \leq D}}|\langle L_n, \what{P}_{\bk} \rangle|^2 \nonumber \\
      &\leq \Ex_{\bx^1, \bx^2}\prod_{i < j}\left(1 + (eD)^{2\frac{\lambda}{\sqrt{n}}} \frac{\lambda^2}{n}\sum_{\substack{k = 1 \\ k \text{ odd}}}^D \frac{1}{k^2}(x_i^1x_i^2x_j^1x_j^2)^{k} + (eD)^{2\frac{\lambda}{\sqrt{n}}} \frac{\lambda^4}{n^2}\sum_{\substack{k = 1 \\ k \text{ even}}}^D \frac{4\log(ek)^2}{k^2}(x_i^1x_i^2x_j^1x_j^2)^{k} \right) \nonumber
      \intertext{and, using that the $x_i^a$ are Rademacher-valued,}
      &= \Ex_{\bx^1, \bx^2}\prod_{1 \leq i < j \leq n}\left(1 + (eD)^{2\frac{\lambda}{\sqrt{n}}} \frac{\lambda^2}{n}x_i^1x_i^2x_j^1x_j^2\sum_{\substack{k = 1 \\ k \text{ odd}}}^D \frac{1}{k^2} + (eD)^{2\frac{\lambda}{\sqrt{n}}} \frac{\lambda^4}{n^2}\sum_{\substack{k = 1 \\ k \text{ even}}}^D \frac{4\log(ek)^2}{k^2} \right) \nonumber
      \intertext{Here, using that $\sum_{\ell \geq 0} \frac{1}{(2\ell + 1)^2} = \frac{\pi^2}{8} = \lambda_*^{-2}$ and $\sum_{\ell \geq D / 2} \frac{1}{(2\ell + 1)^2} \geq \int_{D / 2}^{\infty} \frac{dx}{(2x + 1)^2} = \frac{1}{2D + 2} \geq \frac{1}{3D}$, we may write}
      &= \Ex_{\bx^1, \bx^2}\prod_{1 \leq i < j \leq n}\left(1 + (eD)^{2\frac{\lambda}{\sqrt{n}}} \frac{\lambda^2}{n}x_i^1x_i^2x_j^1x_j^2 \left(\frac{1}{\lambda_*^2} - \frac{1}{3D}\right) + O\left(\frac{1}{n^2} \right)\right) \nonumber \\
      &\leq \Ex_{\bx^1, \bx^2}\exp\left((eD)^{2\frac{\lambda}{\sqrt{n}}} \frac{\lambda^2}{n} \left(\frac{1}{\lambda_*^2} - \frac{1}{3D}\right)\sum_{1 \leq i < j \leq n} x_i^1x_i^2x_j^1x_j^2  + O(1)\right) \nonumber \\
      &= \Ex_{\bx^1, \bx^2}\exp\left((eD)^{2\frac{\lambda}{\sqrt{n}}} \frac{\lambda^2}{2} \left(\frac{1}{\lambda_*^2} - \frac{1}{3D}\right)\frac{\langle \bx^1, \bx^2 \rangle^2}{n} + O(1)\right), \nonumber
        \intertext{where we absorb the diagonal terms from $\langle \bx^1, \bx^2 \rangle^2 / n$ into the $O(1)$ term.
        Finally, by our assumption we have $\lambda < \lambda_* + \frac{1}{20D}$.
        Therefore, $\lambda^2 < \lambda_*^2 + \frac{41}{400D}$.
        So, $\lambda^2(\lambda_*^{-2} - \frac{1}{3D}) < 1 - \frac{1}{6D} + \frac{41}{400D} < 1 - \frac{1}{20D}$, and thus, for sufficiently large $n$, we will have}
        &\leq \Ex_{\bx^1, \bx^2}\exp\left( \frac{1}{2}\left(1 - \frac{1}{20D}\right)\frac{\langle \bx^1, \bx^2 \rangle^2}{n} + O(1)\right).
    \end{align}

    This is precisely the quantity arising in the analysis of computationally-unbounded strong detection for the Gaussian Rademacher-spiked matrix model in \cite{PWBM-2018-PCAI} (invoking Le Cam's second moment method as mentioned above), where it is shown that this quantity is bounded as $n \to \infty$, since the factor multiplying $\langle \bx^1, \bx^2 \rangle^2 / n$ is strictly smaller than $\frac{1}{2}$.
    (This makes formal the heuristic argument that $\langle \bx^1, \bx^2 \rangle / \sqrt{n}$ converges to a standard Gaussian random variable, whereby the above is asymptotically an evaluation of the moment generating function of a $\chi^2$ random variable, which we also alluded to in Section~\ref{sec:kesten-stigum}.)
    Thus we find
    \begin{equation}
        \limsup_{n \to \infty}\sum_{\substack{\bk \in \NN^N \\ |\bk|_{\infty} \leq D}}|\langle L_n, \what{P}_{\bk} \rangle|^2 < +\infty,
    \end{equation}
    as claimed.
\end{proof}

\begin{remark}[A general ``Rademacher trick'']
    Step 1 in the proof above, where we take advantage of the Rademacher prior to decouple the dependence of the likelihood ratio's components on the signal-to-noise ratio $\lambda$ from that on the actual spike vector $\bx$, should apply in much greater generality.
    Indeed, we expect a similar property to hold in any additive model where (1) the spike distribution $\bx \sim \sP_n$ has the property that $|x_i| = \lambda(n)$ for some constant $\lambda(n)$ for all $i \in [N(n)]$, and (2) the noise distribution is symmetric, whereby the polynomials playing the role of $\widehat{\tau}_k$ will be even polynomials for even $k$ and odd polynomials for odd $k$.
    Thus a similar analysis is likely possible in a wide range of models with ``flat'' signals $\bx$, reducing the low-degree analysis to analytic questions about $\what{\tau}_k$.
\end{remark}

\begin{remark}
    The original argument of \cite{PWBM-2018-PCAI} derives the critical value $\lambda^*$ for $\Wig(\rho^{\sech}, \lambda)$ in terms of the Fisher information in the family of translates of the distribution $\rho^{\sech}$, while our calculation, if we consider $D = D(n)$ growing slowly, obtains the same value using orthogonal polynomials.
    It appears that the connection between these derivations lies in the summation identity $\sum_{\ell \geq 0} \frac{1}{(2\ell + 1)^2} = \frac{\pi^2}{8}$.
    We suspect that there are similar identities associated to these two approaches to calculating the critical signal-to-noise ratio in $\Wig(\rho, \lambda)$ for other well-behaved measures $\rho$.
    It would be interesting to understand what class of summation identities arises in this way, and whether equating these two derivations can give novel proofs of such identities.
\end{remark}

\subsection{Mixed spiked matrix model: Proof of Theorem~\ref{thm:mixed-spiked-mx}}

This result follows almost immediately from Theorem~\ref{thm:spiked-mx} upon expanding the definitions.

\begin{proof}[Proof of Theorem~\ref{thm:mixed-spiked-mx}]
    Recall that $\rho^{\heavy}$ has density proportional to $(1 + x^2)^{-\alpha / 2}$, and we write $\Wig(\rho^{\sech}, \lambda_* \cdot \lambda) \equalscolon ((\QQ^{(1)}_n, \PP^{(1)}_n))_{n = 1}^{\infty}$ and $\Wig(\rho^{\heavy}, \lambda) \equalscolon ((\QQ^{(2)}_n, \PP^{(2)}_n))_{n = 1}^{\infty}$.
    Note that, since $\EE_{x \sim \rho^{\heavy}} |x|^{\beta} = +\infty$ for all $\beta \geq \alpha - 1$, all polynomials in $L^2(\QQ^{(2)}_n)$ have entrywise degree at most $\alpha / 2$ in every coordinate.

    Since $\QQ_n$ is a mixture of $\QQ^{(1)}_n$ and $\QQ^{(2)}_n$ with weight $\frac{1}{2}$ for each, we have $L^2(\QQ_n) = L^2(\QQ^{(1)}_n) \cap L^2(\QQ^{(2)}_n)$ and $\EE_{\bY \sim \QQ_n} f_n(\bY)^2 = \frac{1}{2}\EE_{\bY \sim \QQ_n^{(1)}} f_n(\bY)^2 + \frac{1}{2}\EE_{\bY \sim \QQ_n^{(2)}} f_n(\bY)^2$.
    Therefore,
    \begin{align}
      &\left\{\begin{array}{ll} \text{maximize} & \EE_{\bY \sim \PP_n} f_n(\bY) \\[0.5em] \text{subject to} & f_n \in \RR[\bY] \cap L^2(\QQ_n) \\[0.5em] & \EE_{\bY \sim \QQ_n} f_n(\bY)^2 = 1 \end{array}\right\} \nonumber \\[0.5em]
      &\hspace{2cm}\leq \left\{\begin{array}{ll} \text{maximize} & \EE_{\bY \sim \PP_n^{(1)}} f_n(\bY) \\[0.5em] \text{subject to} & f_n \in \RR[\bY] \setminus \{0\} \\[0.5em] & \deg_{\{i, j\}}(f_n) \leq \alpha / 2 \text{ for all } \{i, j\} \in \binom{[n]}{2} \\[0.5em] & \EE_{\bY \sim \QQ_n^{(1)}} f_n(\bY)^2 \leq 2 \end{array}\right\} \nonumber \\[0.5em]
      &\hspace{2cm}\leq \left\{\begin{array}{ll} \text{maximize} & \EE_{\bY \sim \PP_n^{(1)}} f_n(\bY) \\[0.5em] \text{subject to} & f_n \in \RR[\bY] \\[0.5em] & \deg_{\{i, j\}}(f_n) \leq \alpha / 2 \text{ for all } \{i, j\} \in \binom{[n]}{2} \\[0.5em] & \EE_{\bY \sim \QQ_n^{(1)}} f_n(\bY)^2 = 1 \end{array}\right\} \cdot \sqrt{2},
    \end{align}
    where the final expression is precisely that controlled by Theorem~\ref{thm:spiked-mx}, and the result follows upon taking $D = \alpha / 2$ in that result.
\end{proof}

\section*{Acknowledgements}
\addcontentsline{toc}{section}{Acknowledgements}

I thank Alex Wein for comments on an early version of the manuscript, and Afonso Bandeira for helpful discussions.

\bibliographystyle{alpha}
\addcontentsline{toc}{section}{References}
\bibliography{main}

\end{document}